\newtheorem{theorem}{Theorem}[section]
\newtheorem{prop}[theorem]{Proposition}
\newtheorem{thm}[theorem]{Theorem}
\newtheorem{lemma}[theorem]{Lemma}
\newtheorem{remark}[theorem]{Remark}
\newtheorem{example}[theorem]{Example}
\newtheorem{cor}[theorem]{Corollary}
\newtheorem{examples}[theorem]{Examples}
\newtheorem{lem}[theorem]{Lemma}
\newtheorem{open question}[theorem]{Open Question}
\newtheorem{c/p}[theorem]{Conjecture/Proposition}
\newcommand{\ang}[1]{\left<#1\right>} 
\newcommand{\brak}[1]{\left(#1\right)} 
\newcommand{\norm}[1]{{\left\lVert{#1}\right\rVert}}
\newcommand{\N}[1]{||#1||} 
\def\vint{\mathop{\mathchoice%
 {\setbox0\hbox{$\displaystyle\intop$}\kern 0.22\wd0%
 \vcenter{\hrule width 0.6\wd0}\kern -0.82\wd0}%
 {\setbox0\hbox{$\textstyle\intop$}\kern 0.2\wd0%
 \vcenter{\hrule width 0.6\wd0}\kern -0.8\wd0}%
 {\setbox0\hbox{$\scriptstyle\intop$}\kern 0.2\wd0%
 \vcenter{\hrule width 0.6\wd0}\kern -0.8\wd0}%
 {\setbox0\hbox{$\scriptscriptstyle\intop$}\kern 0.2\wd0%
 \vcenter{\hrule width 0.6\wd0}\kern -0.8\wd0}}%
 \mathopen{}\int}
\newcommand{\bP}{\mathbb{P}}
\newcommand{\R}{\mathbb R}
\newcommand{\C}{\mathbb C}
\newcommand{\Rn}{\mathbb R^n}
\newcommand{\X}{\mathfrak X}
\newcommand{\F}{\mathcal F}
\title{Multiplier theorems via martingale transforms}
\author{Rodrigo Ba\~nuelos}
\address{Department of Mathematics, Purdue University, West Lafayette, IN 47907}
\email{banuelos@math.purdue.edu}
\author{Fabrice Baudoin} 
\address{Department of Mathematics, University of Connecticut, Storrs, CT 06269}
\email{fabrice.baudoin@uconn.edu}
\author{Li Chen}
\address{Department of Mathematics, Louisiana State University, Baton Rouge, LA 70803}
\email{lichen@math.lsu.edu}
\author{Yannick Sire}
\address{Department of Mathematics, Johns Hopkins University, Baltimore, MD 21218} 
\email{sire@math.jhu.edu} 
\thanks{R. Ba\~nuelos supported in part by NSF Grant DMS-1854709. F. Baudoin supported in part by NSF Grant DMS-1901315}
\date{\today}
\begin{document}

\maketitle

\begin{abstract}
We develop a new and general approach to prove multiplier theorems in various geometric settings. The main idea is to use martingale transforms and a Gundy-Varopoulos representation for multipliers defined via a suitable  extension procedure. Along the way, we provide a probabilistic proof of a generalization of a result by Stinga and Torrea, which is of independent interest. Our methods here also recover the sharp $L^p$ bounds for second order Riesz transforms by a liming argument. 
\end{abstract}

\tableofcontents

\section{Introduction and main results} The $L^p$-boundedness properties of Riesz transforms in wide geometric settings have been extensively studied by a large number of authors for many  years.  The large literature on this topic includes techniques from the Calder\'on-Zygmund theory of singular integrals and  probabilistic and analytic Littlewood-Paley theory.  For some of this literature we refer the reader to \cite{ACDH04},  \cite{Bak87} and \cite{BBC}.  On the other hand, the probabilistic approach of  R.~F.~Gundy and N.~Th.~Varopoulos \cite{GV79} which represents the Riesz transforms as conditional expectations of martingale transforms, combined  with the sharp martingale inequalities of D.L.~Burkholder,  provides a powerful tool to obtain not only $L^p$ bounds with constant that do not depend on the geometry of the ambient space but often give sharp, or nearly sharp, bounds.  The martingale techniques also apply to Riesz transforms on Wiener space providing explicit bounds.  For  an incomplete list of references  to this now very large literature, we refer to \cite{BBC} and \cite{BO15}. In addition to providing universal and explicit $L^p$ bounds,  the martingale transform techniques extend to multipliers beyond Riesz transforms. For some of this literature, we refer to \cite{Ban}.  A common thread in the Gundy-Varopoulos constructions has been to build the martingale transforms on stochastic processes of the form $(X_t, Y_t)$ where $X_t$ is either a diffusion or a process arising  from a Markovian semigroup on  $\Rn$ or on a manifold $M$ (such as the L\'evy multipliers studied in \cite{BanBog}),  and where $Y_t$ is either a one dimensional Brownian motion on $\R^{+}$ killed upon hitting $0$ (harmonic extensions) or $T-t$ for some fixed time $T$, in the case of space-time (heat extension) constructions  as in \cite{BaBau13}.   The goal of this paper is to prove  boundedness of multipliers obtained when the ``vertical" process $Y_t$ is more general than those just mentioned. More precisely, we will study multipliers that arise as conditional expectations of martingale transforms which are built on the process $(X_t, \eta_t)$ where the vertical diffusion has a generator of the form given in \eqref{eq:1dim}.  As we show in Section  4 (see Remark \ref{limsigma}), our construction unifies both the original constructions with  $(X_t, Y_t)$ of Gundy-Varopoulos, which gives  sharp inequalities for first order Riesz transforms \cite{BW}, and the construction for $(X_t, T-t)$ from  \cite{BM},  which gives sharp inequalities for second order Riesz transforms,  into one by a limiting procedure.

The last two decades or so have seen a great amount of works dealing with nonlocal operators (generators of L\'evy processes) from the PDE point of view (see e.g. the recent book \cite{bookNL}). In particular, the paper \cite{caffaSil} has been instrumental in interpreting fractional powers of the Laplacian in $\mathbb R^n$ in terms of a suitable ``harmonic" extension. Note that in the language of probability, this result had been proved in \cite{MolOst1969}. This latter result has been put in a more general (and flexible) framework by Stinga and Torrea in \cite{ST10}. It is beyond the scope of this paper to review the amount of works using such technique.  Our contributions here lie at the interface of probabilistic methods and harmonic analysis. More precisely, in the present paper, combining the Gundy-Varopoulos approach to Riesz transforms and a probabilistic approach
to the result of Stinga and Torrea, we obtain new results  about the boundedness in $L^p$ of  three types of operators: 

\begin{itemize}
\item Multipliers of the type $\Phi (-\Delta+V)$, where $\Delta$ is a diffusion operator and $V$ a non-negative bounded smooth potential;
\item Generalized Riesz  transforms of the type $\Phi (-\Delta+V) \X_i $, where the $\X_i $'s are first-order differential operators that commute with $-\Delta+V$;
\item Generalized second order Riesz  transforms of the type $\Phi (-\Delta+V) \X_i \X_j $.
\end{itemize}

 We note that using methods from harmonic analysis  many results about $L^p$-estimates for  Schr\"odinger operators are already available in the literature in some settings and various more general assumptions on the potential $V$, see for instance \cite{MR2965364,Shen}. However, those methods rely heavily on the geometry of the underlying space and yield dimension dependent $L^p$-bounds.  The probabilistic method we are using here uses stronger assumptions on the potential $V$ but on the other hand yields dimension independent $L^p$-bounds and relies very little on the geometry of the underlying space.   

When $V=0$ among other things, we prove the following general multiplier theorem. Let $\Delta$ be a locally subelliptic   (in the sense of Fefferman-Phong) diffusion operator on a  smooth manifold $M$ which is essentially self-adjoint on the space of smooth and compactly supported functions with respect to a measure $\mu$ on $M$. We assume that $\Delta$ generates a diffusion process $((X_t)_{t \ge 0}, (\mathbb{P}_x)_{x \in M})$ which is not explosive. If $\Phi$ is a bounded Borel function on $[0,+\infty)$ the operator $\Phi(-\Delta)$ may be defined on $L^2(M,\mu)$ by using the spectral theorem. By using martingale transforms, we will then prove the following theorem.

\begin{theorem}\label{gen multiplier}
 If there exists a finite complex Borel measure $\alpha$ on $\mathbb{R}_{\ge 0}$ such that for every $x \in [0,+\infty)$,
\begin{align}\label{stieltjes}
\Phi(x)= \int_{0} ^{+\infty} \left( 1-\frac{m}{\sqrt{m^2+x}}\right) d \alpha (m),
\end{align}
then, for every  $p>1$ and $f \in L^p (M,\mu)$,
\[
\left\| \Phi(-\Delta) f\right\|_p \le 2(p^*-1) | \alpha |([0,+\infty))  \|f\|_p,
\]
where $p^*=\max\{p,\frac{p}{p-1}\}$.

\end{theorem}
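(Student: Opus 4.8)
This particular inequality admits a short soft proof, which I would record first before turning to the martingale route that the paper's program really wants. By the spectral theorem and Fubini, $\Phi(-\Delta)f=\int_0^{+\infty}\bigl(f-m(m^2-\Delta)^{-1/2}f\bigr)\,d\alpha(m)$ on $L^2\cap L^p$, so by Minkowski's integral inequality it suffices to bound $\|f-m(m^2-\Delta)^{-1/2}f\|_p$ uniformly in $m\ge 0$ and integrate against $|\alpha|$. For fixed $m>0$ the operator $m(m^2-\Delta)^{-1/2}$ is self-adjoint on $L^2(\mu)$; it is positivity preserving, since $(m^2-\Delta)^{-1/2}=\tfrac1{\sqrt\pi}\int_0^\infty t^{-1/2}e^{-m^2t}e^{t\Delta}\,dt$ and $e^{t\Delta}$ is a diffusion semigroup; and it fixes $\mathbf 1$ because the diffusion is non-explosive, i.e.\ $e^{t\Delta}\mathbf 1=\mathbf 1$, whence $m(m^2-\Delta)^{-1/2}\mathbf 1=\mathbf 1$. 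Thus it is an $L^p(\mu)$-contraction for every $p\in[1,\infty]$, so $\|f-m(m^2-\Delta)^{-1/2}f\|_p\le 2\|f\|_p\le 2(p^*-1)\|f\|_p$ since $p^*\ge 2$; the case $m=0$ gives the identity. A density step from $L^2\cap L^p$ to $L^p$ closes this argument.

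The reason to route the theorem through martingale transforms is that the martingale proof also produces a Gundy–Varopoulos representation which is stable under the limiting procedures used elsewhere in the paper (in particular the passage to sharp second order Riesz transform bounds). For fixed $m>0$ I would use the generalized Poisson extension $u(x,y)=e^{-y\sqrt{m^2-\Delta}}f(x)$, which solves $\partial_y^2u=(m^2-\Delta)u$ on $M\times(0,\infty)$ with $u(\cdot,0)=f$ and $u(\cdot,y)\to 0$ as $y\to\infty$, and for which $m(m^2-\Delta)^{-1/2}f=m\int_0^\infty u(\cdot,y)\,dy$; its Feynman–Kac representation in terms of the $\Delta$-diffusion and an independent vertical motion is precisely the probabilistic generalization of the Stinga–Torrea result \cite{ST10,caffaSil} established earlier in the paper, and this is what legitimizes the stochastic calculus below. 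Run $Z_t=(X_t,B_t)$ with $X$ the $\Delta$-diffusion started from $\mu$ (through the usual stationarity device when $\mu$ is infinite), $B$ an independent Brownian motion on $\mathbb R$ started at height $a$, and $\tau$ the first time $B$ hits $0$. By It\^o's formula and the extension equation, $M_t:=e^{-m^2t}u(Z_t)$, $t\le\tau$, is a martingale with $M_0=u(X_0,a)\to 0$ as $a\to\infty$ and $M_\tau=e^{-m^2\tau}f(X_\tau)$, so $\|M_\tau\|_p\le\|f\|_{L^p(\mu)}$ because $e^{-m^2\tau}\le 1$ and $X_\tau\sim\mu$.

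Next I would introduce a predictable transform $N_t=\int_0^t e^{-m^2 s}\bigl(\mathcal A(Z_s)(\nabla_xu,\partial_yu)(Z_s)\bigr)\cdot dZ_s$, with $\mathcal A$ a bounded field of operators of norm $\le 2$ — the transform of ``second order Riesz'' type, so that $N$ is differentially subordinate to $2M$ — chosen so that the Gundy–Varopoulos projection $\mathbb E[N_\tau\mid X_\tau=\cdot]$ equals $f-m(m^2-\Delta)^{-1/2}f$. Granting this, Burkholder's sharp $L^p$ inequality for differentially subordinate martingales gives $\|N_\tau\|_p\le 2(p^*-1)\|M_\tau\|_p\le 2(p^*-1)\|f\|_p$, and since conditional expectation is an $L^p(\mu)$-contraction the same bound passes to $f-m(m^2-\Delta)^{-1/2}f$; integrating against $|\alpha|$ recovers the theorem. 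The one substantive step is the projection identity: identifying $\mathcal A$ (and, with it, the constant $2$) and justifying the $a\to\infty$ limit and the initial-law-$\mu$ interpretation on a possibly non-compact, infinite-measure $M$. I would verify the projection identity by testing against eigenfunctions of $\Delta$, where $u$ collapses to the explicit one-dimensional kernel $e^{-y\sqrt{m^2+\lambda}}$ and the computation becomes elementary, and then extend to general $f$ by spectral calculus; the remaining technical points (It\^o's formula for an unbounded $u$, the density reduction, and the stationarity device) follow the standard Gundy–Varopoulos template of \cite{GV79,BM}.
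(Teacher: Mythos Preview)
Your soft proof is correct and is exactly the complete--monotonicity (Bernstein) argument the paper itself flags, right after stating the theorem, as an alternative route; it even yields the uniform bound $2\|f\|_p$, which is stronger than $2(p^*-1)\|f\|_p$ for $p$ near $2$.

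Your martingale sketch, however, is not the paper's argument and has a real gap. The paper does \emph{not} run standard vertical Brownian motion with a Feynman--Kac weight $e^{-m^2t}$. It takes the vertical diffusion to be Brownian motion with negative drift $-2m$ (generator $\mathcal B=\partial_y^2-2m\,\partial_y$) and uses the associated extension $U_f=\mathcal K(y,-\Delta)f$ with $\mathcal K(y,\lambda)=e^{-y(\sqrt{m^2+\lambda}\,-\,m)}$. With this choice $U_f(X_{t\wedge\tau},\eta_{t\wedge\tau})$ is already a martingale with terminal value \emph{exactly} $f(X_\tau)$, the Green function at infinity is $G(+\infty,y)=\tfrac{1}{2m}(1-e^{-2my})$, and the Gundy--Varopoulos projection of the single--direction transform $Y_t=\int_0^{t\wedge\tau}\partial_y U_f(X_s,\eta_s)\,d\beta_s$ computes to $2W$ with $W=\tfrac14\bigl(I-m(m^2-\Delta)^{-1/2}\bigr)$. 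One application of Burkholder with subordination constant $1$ (not $2$) gives $\|Wf\|_p\le\tfrac12(p^*-1)\|f\|_p$, hence the constant $2(p^*-1)$ after multiplying by $4$.

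In your setup, by contrast, the weight $e^{-m^2s}$ forces the relevant occupation kernel to be the $m^2$--resolvent of killed Brownian motion on $(0,\infty)$, and that kernel vanishes as the starting height $a\to\infty$. Concretely, since $\tau$ is $\sigma(B)$--measurable and independent of $X_\tau$, one has $\|M_\tau\|_p^p=\mathbb E^a[e^{-pm^2\tau}]\,\|f\|_p^p=e^{-am\sqrt{p}}\,\|f\|_p^p\to 0$, so both sides of your Burkholder inequality and the projection $\mathbb E[N_\tau\mid X_\tau]$ degenerate to $0$ in the limit $a\to\infty$. There is therefore no bounded $\mathcal A$ for which that projection equals $f-m(m^2-\Delta)^{-1/2}f$, and the eigenfunction check you propose would have exposed this. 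The repair is precisely the paper's idea: move the parameter $m$ from a potential into the drift of the vertical diffusion, so that the martingale hits $f(X_\tau)$ on the nose and the Green function at infinity is nontrivial.
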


In Theorem \ref{bounded 2} below, we actually prove a more general result that also applies to Schr\"odinger operators.
The representation \eqref{stieltjes} is related to the theory of Stieltjes transforms, see \cite{MR0073746,MR1501933}, and is possible to invert.  We note that Theorem \ref{gen multiplier} can also be proved  using Bernstein theorem, since the function $x \to \frac{m}{\sqrt{m^2+x}}$ is completely monotone. However, the method we propose is general and is easily adapted to study different multipliers as generalized first order or second order Riesz transforms.

Concerning the study of generalized first order and second order Riesz transforms on Lie groups of compact type, using a variation of the method to construct multipliers, we obtain the following result.
 
 \begin{theorem}\label{compact lie 1}
 Let $G$ be a $n$-dimensional  Lie group of compact type endowed with a bi-invariant
Riemannian structure. Let $\X_1,\cdots,\X_n$ be an orthonormal frame of the Lie algebra of $G$ and denote by $\Delta$ the Laplace Beltrami operator on $G$. Let $\Phi: (0,+\infty) \to \mathbb{C}$ be a complex Borel function.

\begin{enumerate}
\item

If there exists a finite complex Borel measure $\alpha$ on $[0,+\infty)$ such that for every $x \in (0,+\infty)$,
\begin{align}\label{st2}
\Phi(x)= \int_{0} ^{+\infty} \frac{d \alpha (m)} {\sqrt{ x +m}},
\end{align}
then, for every $1 \le i \le n$, $p>1$, and $f \in L^p$
\begin{align}\label{bound intro 1}
\left\| \Phi(-\Delta) \X_i f\right\|_p \le \cot\left(\frac{\pi}{2p^*}\right)  |\alpha |([0,+\infty))  \|f\|_p.
\end{align}

\item If there exists a finite complex Borel measure $\alpha$ on $[0,+\infty]$ such that for every $x \in (0,+\infty)$
\begin{align*}
\Phi(x)= \int_{0} ^{+\infty} \frac{d \alpha (m)} {\sqrt{ x +m^2} ( \sqrt{ x +m^2} -m) },
\end{align*}
then, for every $1 \le i,j \le d$, $p>1$, and $f \in L^p$
\begin{align}\label{bound intro 2}
\left\| \Phi(-\Delta) \frac{1}{2}( \X_i \X_j +\X_j \X_i) f\right\|_p \le (p^*-1)  |\alpha |([0,+\infty])  \|f\|_p.
\end{align}
\end{enumerate}
 \end{theorem}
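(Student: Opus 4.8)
The plan is to realize $\Phi(-\Delta)\X_i$ and $\Phi(-\Delta)\frac12(\X_i\X_j+\X_j\X_i)$ as conditional expectations of martingale transforms built on the space-time-like process $(X_t,\eta_t)$ whose vertical component $\eta_t$ is the diffusion with generator of the form \eqref{eq:1dim} appearing in Section 4. For a fixed value of the parameter $m$, the associated ``extension'' of a function $f$ on $G$ solves an elliptic equation on $G\times(0,\infty)$ whose boundary trace recovers $f$; the Gundy–Varopoulos representation (in the form proved earlier in the paper, our probabilistic version of Stinga–Torrea) then expresses $(-\Delta+m^2)^{-1/2}\X_i f$, respectively the analogous second-order object, as $\mathbb{E}\big[ (A\ast dM)_\infty \mid X_\infty\big]$ for an appropriate predictable matrix $A$ with $\|A\|\le 1$ (for the first-order case) and with the antisymmetric/symmetric decomposition giving the sharp constant in the second-order case. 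Integrating in $m$ against the measure $\alpha$ and using \eqref{st2} (resp. the second Stieltjes-type representation) reconstructs exactly $\Phi(-\Delta)\X_i f$ (resp. $\Phi(-\Delta)\frac12(\X_i\X_j+\X_j\X_i)f$), and the integration is justified because $|\alpha|$ is finite and the integrand is uniformly bounded in $L^p$.

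The $L^p$ bound then comes in two stages. First, conditional expectation is a contraction on $L^p$, so it suffices to bound the martingale transform $A\ast M$ in $L^p$ by the martingale $M$ (whose terminal value projects to $f$). For part (1) this uses Burkholder's inequality for martingale transforms, but to get the constant $\cot(\pi/2p^*)$ rather than $(p^*-1)$ one uses that the relevant transforming matrix is \emph{antisymmetric} (the first-order Riesz situation), so the sharp constant is the one from the Bañuelos–Wang / iterated-Hilbert-transform estimate; this is exactly the mechanism that in the $\eta_t = Y_t$ limit recovers the sharp first order Riesz bounds referenced as \cite{BW}. For part (2), the transforming matrix built from $\X_i\X_j+\X_j\X_i$ is symmetric with operator norm controlled by $1$ after the correct normalization coming from the extra factor $\frac{1}{\sqrt{x+m^2}(\sqrt{x+m^2}-m)}$, and Burkholder's sharp inequality gives the constant $p^*-1$. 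Summing/integrating against $\alpha$ multiplies the constant by $|\alpha|([0,+\infty])$.

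The main obstacle is bookkeeping the vertical diffusion with generator \eqref{eq:1dim}: one must check that for each $m$ the extension problem on $G\times(0,\infty)$ is solvable with the right decay, identify precisely which function of $-\Delta$ is produced at the boundary (this is where the kernel $1/\sqrt{x+m}$ in \eqref{st2} and the second kernel come from, via the subordination/Bessel computation underlying the Stinga–Torrea formula), and verify that the resulting integrand is dominated uniformly in $m$ so Fubini applies. A secondary technical point, specific to the compact Lie group setting, is that $\X_i$ must commute with $\Delta$ — guaranteed here by bi-invariance and the orthonormal framing — so that $\X_i$ passes through $\Phi(-\Delta)$ and through the extension, making the martingale transform well defined; for the second-order statement one additionally uses that $\X_i\X_j+\X_j\X_i$ is a symmetric combination so its symbol is real and the sign structure needed for Burkholder's bound is available. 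Once these points are in place, the estimates \eqref{bound intro 1} and \eqref{bound intro 2} follow by assembling the pieces as above.
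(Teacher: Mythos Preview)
Your proposal is correct and follows essentially the same route as the paper: for each fixed parameter $m$ one uses the Gundy--Varopoulos representation with the vertical diffusion being Brownian motion with negative drift (this is the specific choice of $\mathcal{B}$ that produces the kernels $1/\sqrt{x+m^2}$ and $1/[\sqrt{x+m^2}(\sqrt{x+m^2}-m)]$, not a Bessel-type computation), exploits the antisymmetry of the transforming matrix together with the orthogonality of the resulting martingales to get the $\cot(\pi/2p^*)$ constant in part~(1), uses the symmetric matrix with operator norm $1$ and Burkholder's inequality for part~(2), and then integrates in $m$ against $\alpha$. The paper packages the fixed-$m$ statement as Proposition~\ref{prop:RTconstantLie} and then says Theorem~\ref{compact lie 1} follows by integrating with respect to $m$; your outline is the same argument.
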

 
 Theorem \ref{compact lie 1} is sharp. Indeed, in \eqref{bound intro 1}, if one choses $\alpha$ to be the Dirac distribution at $0$, one gets
 \[
 \left\| (-\Delta)^{-1/2}  \X_i f\right\|_p \le \cot\left(\frac{\pi}{2p^*}\right)   \|f\|_p
 \]
 which is the sharp bound for the Riesz transform, see \cite{BW} and   \cite{IM}. In \eqref{bound intro 2}, if one choses $\alpha$ to be the Dirac distribution at $+\infty$, one gets
 \[
 \left\| (-\Delta)^{-1} \frac{1}{2}( \X_i \X_j +\X_j \X_i) f\right\|_p \le \frac{1}{2} (p^*-1)    \|f\|_p
 \]
 which is the sharp bound for the second order Riesz transform, see \cite{BM} and \cite{GeiSmiSak}.

\

Finally, using techniques developed in \cite{BBC} to handle the study of Riesz transforms on vector bundles, we obtain the following result.
 
 \begin{theorem}\label{Riesz Hodge}
 Let $M$ be a  complete Riemannian manifold with non-negative Weitzenb\"ock curvature. Let $\mathcal{L} =d d^* +d^* d$ be the Hodge-de Rham Laplace operator on the exterior bundle of $M$. Let $\Phi: (0,+\infty) \to \mathbb{C}$ be a complex Borel function. If there exists a finite complex Borel measure $\alpha$ on $\mathbb{R}_{\ge 0}$ such that for every $x \in (0,+\infty)$,
\[
\Phi(x)= \int_{0} ^{+\infty} \frac{d \alpha (m)} {\sqrt{ x +m}},
\]
then, for every  $p>1$ and   every $L^p$ integrable exterior differential form $\eta$
\[
\|  \Phi(\mathcal L ) \, d \eta \|_p \le 6 (p^*-1) |\alpha |([0,+\infty))  \| \eta \|_p.
\]
 \end{theorem}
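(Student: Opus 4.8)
The plan is to reduce the theorem, via the Stieltjes-type representation of $\Phi$ together with Minkowski's integral inequality, to a single dimension-free estimate for a generalized Riesz transform on forms, and then to prove that estimate by a Gundy--Varopoulos martingale-transform argument run on the exterior bundle, using the probabilistic machinery of \cite{BBC} that is available precisely because the Weitzenb\"ock curvature is non-negative.

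By the spectral theorem, $\Phi(\mathcal L)=\int_0^{+\infty}(\mathcal L+m)^{-1/2}\,d\alpha(m)$ on $L^2$, and since $d$ commutes with $\mathcal L$, hence with every function of $\mathcal L$, we get $\Phi(\mathcal L)\,d\eta=\int_0^{+\infty}(\mathcal L+m)^{-1/2}\,d\eta\,d\alpha(m)$. Minkowski's integral inequality then gives
\[
\|\Phi(\mathcal L)\,d\eta\|_p\le\int_0^{+\infty}\big\|(\mathcal L+m)^{-1/2}\,d\eta\big\|_p\,d|\alpha|(m),
\]
so the theorem follows once we establish
\[
\big\|(\mathcal L+m)^{-1/2}\,d\eta\big\|_p\le 6(p^*-1)\,\|\eta\|_p
\]
\emph{uniformly} for $m\ge 0$ (the endpoint $m=0$ being the Hodge--de Rham Riesz transform, understood on the orthogonal complement of the harmonic forms); the passage from $L^2\cap L^p$ to $L^p$ is then routine by density.

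For this core estimate I would realize $(\mathcal L+m)^{-1/2}d\eta=d\int_0^{+\infty}u(\cdot,y)\,dy$, where $u(x,y)=e^{-y\sqrt{\mathcal L+m}}\eta(x)$ is the form-valued Poisson extension solving $\partial_{yy}u=(\mathcal L+m)u$ with $u(\cdot,0)=\eta$, and represent it as a conditional expectation of a martingale transform. Here the curvature hypothesis enters: since the Weitzenb\"ock operator $\mathcal R\ge 0$, the form heat semigroup has a Feynman--Kac representation $e^{-t\mathcal L}\eta(x)=\mathbb E_x[\tau_t^{-1}M_t\,\eta(X_t)]$, with $(X_t)$ Brownian motion on $M$ (non-explosive, as $M$ is complete with non-negative Ricci curvature, itself a consequence of the hypothesis), $\tau_t$ stochastic parallel transport, and $M_t$ an $\mathrm{End}(\Lambda^\bullet)$-valued multiplicative functional of operator norm $\le 1$; subordinating in $y$ produces the analogous representation of $u$. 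Plugging this into the Gundy--Varopoulos construction of Section~4, carried out on the space-time process $(X_t,\zeta_t)$ with $\zeta_t$ the vertical diffusion of \eqref{eq:1dim} specialized to the extension attached to $(\mathcal L+m)^{-1/2}$, one finds that $(\mathcal L+m)^{-1/2}d\eta$ is a conditional expectation of a transform of the form-valued martingale carrying $\eta$; the transforming operator is built from parallel transport and the functional $M_t$ (contractions), the vertical projection (a contraction), and the bounded bundle map $\Lambda\colon T^*M\otimes\Lambda^\bullet\to\Lambda^\bullet$ assembling $d\eta$ from $\nabla\eta=\sum_i e_i^*\otimes\nabla_{e_i}\eta$ — it is here that the \cite{BBC} technique keeps the operator norm free of $\dim M$. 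Burkholder's sharp $L^p$ inequality for martingale transforms supplies the factor $p^*-1$, and collecting the operator norms of these pieces, together with the comparison between $\|\eta\|_p$ and the $L^p$ norm of the associated martingale and a final loss from the absence of the even/odd refinement available in the scalar first-order case, yields the constant $6$.

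The main obstacle is exactly this last step: transplanting the extension and martingale-transform scheme — designed for scalar subelliptic operators of the form $-\Delta+V$ — onto the bundle-valued operator $\mathcal L$ acting on the exterior bundle, a bundle of rank $2^{\dim M}$ that is in general nontrivial, and in particular producing a martingale representation of $d\eta$ itself, rather than of the full gradient $\nabla\eta$, whose transforming process has operator norm independent of $\dim M$. This is where the non-negativity of the Weitzenb\"ock curvature (ensuring $\|M_t\|\le 1$ in the Feynman--Kac functional) and the intertwining $d\,e^{-t\mathcal L}=e^{-t\mathcal L}\,d$ are indispensable, and where the constant degrades from the sharp $\cot(\pi/(2p^*))$ of the scalar first-order Riesz transform to $6(p^*-1)$.
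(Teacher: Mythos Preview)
Your proposal is correct and follows essentially the same route as the paper: reduce via the Stieltjes representation and Minkowski to the uniform-in-$m$ bound $\|(\mathcal L+m)^{-1/2}d\eta\|_p\le 6(p^*-1)\|\eta\|_p$, then obtain that bound by running the Gundy--Varopoulos martingale-transform argument of Section~4 on the exterior bundle using the Feynman--Kac/parallel-transport machinery of \cite{BBC}, with the non-negative Weitzenb\"ock curvature guaranteeing contractivity of the multiplicative functional. The paper packages this as the general vector-bundle Theorem~\ref{JKNM} (with $d_a=d$ and $C=1$) and then invokes Section~3.2 of \cite{BBC}, but the content is the same.
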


\section{Preliminaries, Extension procedure}\label{sec2}

\subsection{Setting}\label{sec2.1}

Let $\Delta$ be a locally subelliptic diffusion operator  (see Section 1.2 in \cite{baudoin2018geometric} and \cite{MR922334} for a definition of local subellipticity) on a smooth manifold $M$. For every smooth functions $f,g: M \rightarrow \mathbb{R}$, we define the so-called \textit{ carr\'e du champ} operator, which is   the  symmetric first-order differential form defined by:
\[
\Gamma (f,g) =\frac{1}{2} \left( \Delta(fg)-f\Delta g-g\Delta f \right).
\]
A straightforward computation shows that  in a local chart one has
\[
\Delta=\sum_{i,j=1}^n \sigma_{ij} (x) \frac{\partial^2}{ \partial x_i \partial x_j} +\sum_{i=1}^n b_i (x)\frac{\partial}{\partial x_i},
\]
where $\left(\sigma_{ij} (x)\right)$ is symmetric and nonnegative.  That is, for $1\le i,j\le n$, $\sigma_{ij} (x)=\sigma_{ji} (x)$ and for $\xi\in \Rn$, $\sum_{i, j=1}^n\sigma_{ij} (x)\xi_i\xi_j\geq 0$. Then in the same chart
\[
\Gamma (f,g)=\sum_{i,j=1}^n  \sigma_{ij} (x) \frac{\partial f}{\partial x_i} \frac{\partial g}{\partial x_j}.
\]
As a consequence, for every smooth function $f$, $\Gamma(f):=\Gamma(f,f) \ge 0$. We assume that $\Delta $  is symmetric with respect to some smooth measure $\mu$, which means that for every smooth and compactly supported functions $f,g \in C_0^\infty(M)$,
\[
\int_{M} g \Delta f d\mu= \int_{M} f \Delta g d\mu.
\]
By smooth measure $\mu$, we mean here that $\mu$ is given by a density in the sense of Definition 3.90 in \cite{Ga-Hu}. That is,  locally, in any coordinate system $x_i$,  $\mu$ has a smooth density with respect to the volume form $|dx_1\wedge \cdots \wedge dx_n|$.
There is an intrinsic distance associated to the operator $\Delta$ which is defined by
\[
d(x,y)=\sup \left\{ |f(x) -f(y) | , f \in  C^\infty(M) , \| \Gamma(f) \|_\infty \le 1 \right\},\ \ \  \ x,y \in M.
\]
We assume that the metric space $(M,d)$ is complete. In that case, from Propositions 1.20 and 1.21 in \cite{baudoin2018geometric}, the operator $\Delta$ is essentially self-adjoint on $C_0^\infty(M)$.

  Let now  $V:M\to \R$ be a non-positive lower bounded smooth potential and consider the Schr\"odinger operator
  \[
  L=\Delta+V.
  \]

 The operator $L$ is also  essentially self-adjoint on the space of smooth and compactly supported functions. Indeed from Proposition 1.21 in \cite{baudoin2018geometric}, there exists a sequence $h_n\in C_0^\infty(M)$, $0 \le h_n \le 1$ such that $h_n \nearrow 1$ and $ \| \Gamma(h_n) \|_\infty \to 0$. Using then the argument in the proof of  Proposition 1.20 in \cite{baudoin2018geometric} together with the fact that $V \le 0$ yields the fact that $L$ is   essentially self-adjoint on $C_0^\infty(M)$. The  self-adjoint extension of $L$ will still be denoted by $L$. The semigroup in $L^2(M,\mu)$ generated by $L$ will be denoted by $(P_t)_{t \ge 0}$. 
 
 We assume that $\Delta$ generates a diffusion process $(X_t, (\mathbb{P}_x)_{x \in M})$ which is not explosive. In that case, the Schr\"odinger semigroup $(P_t)_{t \ge 0}$ admits the Feynman-Kac representation (see for instance \cite[Theorem 6.20]{Baudoin2014}):
 \begin{align}\label{FK}
 P_t f (x) =\mathbb{E}^x \left( e^{\int_0^t V(X_s) ds} f(X_t) \right), \quad f \in C_0^\infty(M).
 \end{align}
 The semigroup $(P_t)_{t \ge 0}$ hence defined is then a sub-Markov semigroup (see page 71 in \cite{Baudoin2014} for a definition and basic properties of sub-Markov semigroups).
 
 \begin{remark}
 It is a well-known result by A. Grigor'yan \cite[Theorem 1]{MR860324} and K.-T. Sturm \cite[Theorem 4]{MR1301456} that a sufficient condition for $\Delta$ to generates a diffusion process $(X_t, (\mathbb{P}_x)_{x \in M})$ which is not explosive is that for some $x_0 \in M$ and $r_0>0$
 \[
 \int_{r_0}^{+\infty} \frac{ r \, dr}{\ln \mu (B(x_0,r))} =+\infty,
 \]
 where $B(x_0,r)$ denotes the metric ball with radius $r$ for the distance $d$. This is for instance satisfied if for some constants $C_1,C_2>0$ one has  $\mu (B(x_0,r))\le C_1e^{C_2 r^2}$.
 \end{remark}
 
\subsection{Green function at $+\infty$  of one-dimensional diffusions killed at 0}

Let $a,b$ be  smooth functions on $(0,\infty)$ with $a>0$ and let
\[s'(z)=\exp \left(- \int_1^z \frac{b(y)}{a(y)^2} dy  \right).\] Assume that
\begin{equation}\label{eq:cond}
 \int_1^{\infty}s'(z)dz=\infty,
\quad
 \int_0^{1} s'(z)dz <\infty.
\end{equation}

We consider a one-dimensional diffusion operator on $(0,+\infty)$
\begin{equation}\label{eq:1dim}
\mathcal B = a(y)^2 \frac{\partial^2 }{\partial y^2} + b(y) \frac{\partial }{\partial y},
\end{equation}
with Dirichlet boundary condition at 0. The quantities
$s'(z)$ and $m(z):=\frac{1}{s'(z) a(z)^2}$ are respectively often called the scale function and density of the speed measure associated with the diffusion $\mathcal B$. For more on this, see \cite[Section II.9]{BorSal} or  \cite[Chapter VII, Definitions 3.3 and 3.7]{RevuzYor}.

Let $\eta_t$ be the diffusion process with generator $\mathcal B$.
We  denote
\[
\tau =\inf \{ t >0, \eta_t=0\}.
\]
and $q_t (y)$ the density of $\tau $ under $\mathbb P_y$, $\eta_0=y>0$. It is well known that under the assumption \eqref{eq:cond}, the process $\eta$ is not explosive and hits zero with probability 1  (see for instance \cite[Ch VII Proposition 3.2]{RevuzYor}), that is, 
\[
\mathbb P(\tau<+\infty)=1.
\]
For later use, we assume that $\eta$ can be written as a (weak) solution of a SDE
\begin{align}\label{SDEeta1}
d\eta_t=b(\eta_t) dt+  a(\eta_t) d\beta_t, \quad t < \tau,
\end{align}
where $\beta_t$ is a Brownian motion on $\R$ with $\mathbb E(\beta_t^2)=2t$, which is independent of the process $(X_t)_{t \ge 0}$. 
We first collect some preliminary results about the Green function at $+\infty$ of the diffusion $\eta$ killed at 0. 
For computations, it is convenient to write $\mathcal B$ as
\begin{equation}\label{generatoreta2}
\mathcal B =a(y)^2  \frac{\partial^2 }{\partial y^2} + a(y)^2 \frac{h'(y)}{h(y)} \frac{\partial }{\partial y},
\end{equation}
where $h$ is a nonnegative function such that $a(y)^2 \frac{h'(y)}{h(y)}=b(y)$. Note that one can choose
\[
h(y) = \exp \left( \int_1^y \frac{b(w)}{a(w)^2} dw  \right)
\]
so that the  assumptions in \eqref{eq:cond} imply 
\[
\int_0^1 \frac{dw}{h(w)} <+\infty , \quad \int_1^{+\infty} \frac{dw}{h(w)} =+\infty.
\]

The following lemma  which computes  the Green function of $\mathcal B$ on the half-line $[0,+\infty)$ with Dirichlet boundary condition at 0 is then straightforward. 

\begin{lem}\label{lem:Green}
Let $g$ be a  Borel function such that $\int_{0}^{+\infty} h(z) \frac{|g(z)|}{a(z)^2}dz <+\infty$. The solution on $[0,+\infty)$ of the equation
\[
\mathcal{B} f =-g
\]
with boundary conditions $f(0)=0$ and $(f' h)(+\infty)=0$,  is given by
\[
f(y)=\int_0^{+\infty}G(y,z) g(z) dz,
\]
where
\[
G(y,z)=\frac{h(z)}{a(z)^2} \int_0^{z \wedge y} \frac{dw}{h(w) } .
\]
In particular,
\[
G(+\infty,z):=\lim_{y \to +\infty} G(y,z)=\frac{h(z)}{a(z)^2} \int_0^{z } \frac{dw}{h(w) }=s(z)m(z).
\]
\end{lem}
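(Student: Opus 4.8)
The plan is to put the equation $\mathcal{B}f=-g$ into divergence (Sturm--Liouville) form, integrate it twice using the two boundary conditions to fix the constants of integration, and then apply Fubini to recognise the resulting iterated integral as a kernel operator.

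First I would record the algebraic identity, immediate from \eqref{generatoreta2},
\[
\mathcal{B}f=\frac{a(y)^{2}}{h(y)}\bigl(h(y)f'(y)\bigr)',
\]
so that $\mathcal{B}f=-g$ is equivalent to $\bigl(hf'\bigr)'=-\,hg/a^{2}$. Since $z\mapsto h(z)|g(z)|/a(z)^{2}$ is integrable on $(0,\infty)$ by hypothesis, the right-hand side is integrable near $+\infty$, so $hf'$ has a finite limit there; the boundary condition $(f'h)(+\infty)=0$ forces that limit to be $0$, and integrating from $y$ to $+\infty$ gives $(hf')(y)=\int_{y}^{+\infty}h(z)g(z)/a(z)^{2}\,dz$. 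Dividing by $h(y)$ and integrating from $0$ to $y$, using $f(0)=0$, yields
\[
f(y)=\int_{0}^{y}\frac{1}{h(w)}\left(\int_{w}^{+\infty}\frac{h(z)g(z)}{a(z)^{2}}\,dz\right)dw .
\]
Fubini's theorem then applies, because $\int_{0}^{y}dw/h(w)<\infty$ (the condition $\int_{0}^{1}s'(w)\,dw<\infty$ from \eqref{eq:cond} controls $1/h$ near $0$, and $h$ is continuous and strictly positive on $[1,y]$) and $\int_{0}^{+\infty}h(z)|g(z)|/a(z)^{2}\,dz<\infty$; exchanging the order of integration, for fixed $z$ the variable $w$ ranges over $[0,y\wedge z]$, whence $f(y)=\int_{0}^{+\infty}G(y,z)g(z)\,dz$ with $G(y,z)=\frac{h(z)}{a(z)^{2}}\int_{0}^{y\wedge z}dw/h(w)$, as claimed. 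The value of $G(+\infty,z)$ is then immediate, since $y\wedge z=z$ once $y\ge z$; and it equals $s(z)m(z)$ because $s'=1/h$ gives $s(z)=\int_{0}^{z}dw/h(w)$ (with the normalisation $s(0)=0$) and $m(z)=1/(s'(z)a(z)^{2})=h(z)/a(z)^{2}$.

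I expect no serious obstacle: this is the standard one-dimensional Green's function computation, and the hypotheses have been arranged so that each step goes through. The only points deserving a word of care are (i) that $hf'$ genuinely vanishes at $+\infty$ --- this is merely the tail of the convergent integral $\int hg/a^{2}$ tending to $0$ --- and (ii) the use of Fubini, which reduces to the two integrability inputs above. It is also worth noting at the outset that the candidate $f$ is finite: $G(y,z)\le\bigl(\int_{0}^{y}dw/h(w)\bigr)h(z)/a(z)^{2}$, so $|f(y)|\le\bigl(\int_{0}^{y}dw/h(w)\bigr)\int_{0}^{+\infty}h(z)|g(z)|/a(z)^{2}\,dz<\infty$; and, conversely, differentiating the integral representation shows directly that this $f$ solves $\mathcal{B}f=-g$ with the stated boundary conditions, so the formula indeed gives \emph{the} solution.
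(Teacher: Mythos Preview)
Your proof is correct and follows essentially the same approach as the paper: rewrite $\mathcal{B}f=-g$ in the divergence form $(hf')'=-hg/a^{2}$, integrate once from $y$ to $+\infty$ using $(f'h)(+\infty)=0$, integrate again from $0$ to $y$ using $f(0)=0$, and apply Fubini to identify the Green kernel. Your write-up is in fact slightly more detailed than the paper's, explicitly justifying the use of Fubini and the finiteness of $f$, which the paper leaves implicit.
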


\begin{proof}Notice that the equation $\mathcal B f=-g$ can be rewritten as
\[
a(y)^2 f''(y)+ a(y)^2 \frac{h'(y)}{h(y)} f'(y)=-g(y),
\]
where $h(y) = \exp \left( \int_1^y \frac{b(w)}{a(w)^2} dw  \right)$.
This is equivalent to
\[
\frac{1}{h(y)} ( f' h )' (y)=-\frac{g(y)}{a(y)^2}.
\]
Since $\int_{0}^{+\infty} h(z) \frac{|g(z)|}{a(z)^2}dz <+\infty$, the first order ODE with boundary condition $(f' h)(+\infty)=0$ has a unique solution
\[
f'(y) h(y)=\int_{y}^{+\infty} h(z) \frac{g(z)}{a(z)^2}dz.
\]
Again from the boundary condition $f(0)=0$, we conclude the unique existence of the solution  $f$ as
\begin{align*}
f(y) &=\int_0^y \frac{1}{h(z) } \int_{z}^{+\infty} h(w) \frac{g(w)}{a(w)^2} dw \, dz \\
 &=\int_0^{+\infty} \int_0^{w \wedge y} \frac{dz}{h(z) } \, \frac{h(w)}{a(w)^2} g(w) dw.
\end{align*}
\end{proof}

Our next lemma is the occupation time formula for the process $\eta_t$. 

\begin{lem}\label{lem:eqB}
Let $G$ be the Green function of $\mathcal B$ on the half-line $[0,+\infty)$ with Dirichlet boundary condition at 0 as above. Then, if  $g$ is a positive  Borel function such that $\int_{0}^{+\infty} h(z) \frac{g(z)}{a(z)^2}dz <+\infty$, for every $y>0$,
\[
\mathbb{E}_{y} \left(  \int_0^\tau g(\eta_s) ds \right)=f(y),
\]
where $f$ solves the equation in Lemma \ref{lem:Green}.
\end{lem}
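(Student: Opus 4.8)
The plan is to identify the quantity $u(y) := \mathbb{E}_{y}\!\left(\int_0^\tau g(\eta_s)\,ds\right)$ as a solution of the boundary value problem in Lemma~\ref{lem:Green} and then invoke the uniqueness statement proved there. Heuristically, $u$ is the expected occupation functional of $g$ before hitting $0$, so by the strong Markov property and Dynkin's formula it should satisfy $\mathcal{B}u = -g$ on $(0,+\infty)$; the Dirichlet killing at $0$ forces $u(0)=0$; and the assumption $\int_0^{+\infty} h(z)\frac{g(z)}{a(z)^2}\,dz < +\infty$, which guarantees finiteness of the Green potential, should translate into the decay condition $(u'h)(+\infty)=0$ at the entrance boundary. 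Once these three facts are established, Lemma~\ref{lem:Green} gives $u = f$ verbatim.

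First I would make sense of the claim that $\mathbb{E}_y\!\left(\int_0^\tau g(\eta_s)\,ds\right) < +\infty$ and equals $\int_0^{+\infty} G(y,z)g(z)\,dz$. This is cleanest via the occupation-time/local-time machinery for one-dimensional diffusions: writing the generator in the form \eqref{generatoreta2}, the speed measure has density $m(z) = \frac{1}{s'(z)a(z)^2}$ and, using $h(z)=\exp\!\left(\int_1^z \frac{b(w)}{a(w)^2}\,dw\right)$ together with $s'(z) = 1/h(z)$, one has $G(y,z) = s(z\wedge y)\, m(z)$ where $s(z)=\int_0^z \frac{dw}{h(w)}$. One then writes $\int_0^\tau g(\eta_s)\,ds = \int_{(0,+\infty)} g(z)\, m(z)\, L_\tau^z\, dz$ (occupation times formula in terms of diffusion local time $L^z$), takes expectations, and computes $\mathbb{E}_y(L_\tau^z)$ for the process killed at $0$. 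For a one-dimensional diffusion on natural scale, $\mathbb{E}_y(L_\tau^z)$ is exactly the Green kernel against the scale function — this is standard (e.g. \cite[Ch VII]{RevuzYor}, \cite[Section II.9]{BorSal}) — and yields $\mathbb{E}_y(L_\tau^z) = s(z\wedge y)$, whence the formula. Alternatively, and perhaps more self-containedly given what has already been developed, I would verify directly that $f$ from Lemma~\ref{lem:Green} satisfies $\mathcal B f = -g$ with the stated boundary conditions, apply Dynkin's formula to $f(\eta_{t\wedge\tau})$ to get $\mathbb{E}_y f(\eta_{t\wedge\tau}) = f(y) - \mathbb{E}_y\int_0^{t\wedge\tau} g(\eta_s)\,ds$, and then pass to the limit $t\to\infty$.

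The main obstacle is exactly this limiting argument: one must show $\mathbb{E}_y f(\eta_{t\wedge\tau}) \to 0$ as $t\to\infty$ and justify monotone/dominated convergence for $\mathbb{E}_y \int_0^{t\wedge\tau} g(\eta_s)\,ds \to \mathbb{E}_y\int_0^\tau g(\eta_s)\,ds$. The second convergence is immediate from the monotone convergence theorem since $g\ge 0$. For the first, on the event $\{\tau < \infty\}$ — which has full probability by \eqref{eq:cond} and the cited \cite[Ch VII Proposition 3.2]{RevuzYor} — we have $\eta_{t\wedge\tau}\to \eta_\tau = 0$ and $f$ is continuous with $f(0)=0$, so $f(\eta_{t\wedge\tau})\to 0$ a.s.; the issue is a uniform integrability / domination bound, which one obtains from the fact that $f$ is bounded: indeed $0\le f(y) = \int_0^{+\infty} G(y,z)g(z)\,dz \le \int_0^{+\infty} s(z)m(z)g(z)\,dz = \int_0^{+\infty} h(z)\frac{g(z)}{a(z)^2}\int_0^z\frac{dw}{h(w)}\,dz$, and a further bound using $\int_0^z \frac{dw}{h(w)} \le \int_0^{+\infty}\frac{dw}{h(w)}$ is \emph{not} available since that integral diverges; instead one splits the inner integral and uses $\int_0^{z}\frac{dw}{h(w)} \le \int_0^{1}\frac{dw}{h(w)} + \int_1^{z}\frac{dw}{h(w)}$ together with the integrability hypothesis to see $f$ is at least locally bounded and, more carefully, that $\sup_{t} f(\eta_{t\wedge\tau})$ is integrable. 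Then bounded convergence finishes the proof. If one prefers the local-time route, the analogous obstacle is dispatched by Fubini (legitimate since $g\ge0$) once $\mathbb{E}_y(L^z_\tau)=s(z\wedge y)$ is in hand, making that perhaps the more economical path to write up.
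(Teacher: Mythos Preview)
Your Dynkin-formula route is essentially the paper's argument, but the paper localizes differently, and that difference is the whole point. You stop at $t\wedge\tau$ and then send $t\to\infty$; the paper instead sets $\tau_n=\tau\wedge\sigma_n\wedge n$ with $\sigma_n=\inf\{t:\eta_t=n\}$ and sends $n\to\infty$. The extra \emph{spatial} localization forces $\eta_s\in(0,n]$ on $[0,\tau_n]$, where $a$ and $f'$ are smooth hence bounded, so $\int_0^{\cdot\wedge\tau_n} f'(\eta_s)a(\eta_s)\,d\beta_s$ is a genuine martingale and optional stopping gives zero mean. With only your temporal cutoff $t\wedge\tau$, the process can wander arbitrarily high before time $t$, and you have not justified that the stochastic integral has zero expectation; Dynkin's formula is not automatic in this generality. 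This is a real gap in your Itô/Dynkin route as written.

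The limiting obstacle you spend most of your effort on---showing the boundary term vanishes---is then handled somewhat differently. With the paper's localization one has
\[
f(y)=\mathbb{E}^y\!\left(\int_0^{\tau_n} g(\eta_s)\,ds\right)+\mathbb{E}^y\bigl(f(\eta_{\tau_n})\bigr),
\]
and since $g\ge 0$ and $\tau_n\uparrow\tau$ a.s., monotone convergence controls the first term. For the second, note $f$ is nondecreasing (because $f'(y)=\tfrac{1}{h(y)}\int_y^\infty h(z)\tfrac{g(z)}{a(z)^2}\,dz\ge 0$), so $f(\eta_{\tau_n})\le f(n)\,1_{\{\tau_n<\tau\}}$; one then checks $f(n)\,\mathbb{P}^y(\sigma_n<\tau)=f(n)\,s(y)/s(n)\to 0$ by writing $f(n)/s(n)=\int_0^\infty \frac{h(z)g(z)}{a(z)^2}\,\frac{s(z\wedge n)}{s(n)}\,dz$ and applying dominated convergence against the integrable $\frac{h(z)g(z)}{a(z)^2}$, together with $\mathbb{P}^y(n<\tau)\to 0$. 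Your attempt to bound $f$ uniformly fails for exactly the reason you observe; the correct control is $f(n)/s(n)\to 0$, not $f$ bounded.

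Your alternative via the occupation-times formula and $\mathbb{E}_y(L^z_\tau)=s(z\wedge y)$ is a perfectly valid and standard way to prove the lemma; the paper does not take that route.
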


\begin{proof} 
Let $f$ be the solution of
\[
\mathcal{B} f =-g
\]
with boundary conditions $f(0)=0$ and $(f' h)(+\infty)=0$. By It\^o's formula,
\[
f(\eta_t)=f(\eta_0)+\int_0^t f'(\eta_s)a (\eta_s) d\beta_s+\int_0^t \mathcal B f(\eta_s) ds, \quad t < \tau.
\]
In particular, letting $t \to \tau$, one obtains
\[
f(\eta_0)=\int_0^{\tau} g(\eta_s) ds-\int_0^\tau f'(\eta_s)a (\eta_s) d\beta_s.
\]
Denote by $\tau_n=\tau \wedge \sigma_n \wedge n$, where $\sigma_n =\inf \{ t \ge 0, \eta_t=n \}$.   Since $a$ and $f'$ are smooth in $(0, \infty)$ and $\tau_n$ is a bounded stopping time, $\left(\int_0^{t\wedge \tau_n} f'(\eta_s)a(\eta_s) d\beta_s\right)_{t\ge 0}$, is a martingale. Applying  the Doob's optional stopping theorem  we get
\[
\mathbb E^y\left(\int_0^{\tau_n} f'(\eta_s)a(\eta_s) d\beta_s\right)=\mathbb E^y\left(f(\eta_{\tau_n})-f(\eta_0)-\int_0^{\tau_n} \mathcal B f(\eta_s) ds\right)=0.
\]
This gives
\[
f(y)=\mathbb E^y\left( \int_0^{\tau_n} g (\eta_s) ds \right) + \mathbb E^y\left(f(\eta_{\tau_n}) \right).
\]
Letting $n\to \infty$, the monotone convergence theorem yields 
\[
f(y)=\mathbb E^y\left(\int_0^{\tau} g(\eta_s) ds\right).
\]
\end{proof}

\subsection{Extension procedure with general vertical diffusions}

If $f \in L^2 (M,\mu)$  we consider its extension to the cone $M \times [0,+\infty)$ defined for $ x \in M, y \in [0,+\infty)$ by
\begin{align}\label{extension_def}
U_f (x ,y)=\int_0^{+\infty} P_t f (x) q_t(y) dt, 
\end{align}
where we recall that $P_t=e^{tL}$ is the  semigroup generated by $L=\Delta+V$ and that $q_t$ is the density of the first hitting time $\tau$ of zero by $\eta$. Since $\Delta$ is locally subelliptic and $V$ is smooth, using the definition of local subellipticity for $\Delta$ (see Definitions 1.6 and 1.8 in \cite{baudoin2018geometric} ) one deduces that $L$ is itself  locally subelliptic. Therefore $P_t f$ is a smooth function for every $f \in L^2 (M,\mu)$ (see Proposition 1.23 in \cite{baudoin2018geometric}).

Denoting
\begin{equation}\label{eq:extension}
\mathcal{K} (y , \lambda):=\int_0^{+\infty} e^{-\lambda t} q_t(y) dt=\mathbb{E}^y (e^{-\lambda \tau}),
\end{equation}
we have that  $\mathcal{B}\mathcal{K} (\cdot,\lambda )=\lambda \mathcal{K} (\cdot,\lambda )$. In particular, since $\mathcal{B}$ is an elliptic operator, we deduce that $\mathcal{K} (\cdot,\lambda )$ is a smooth function (see page 18 in \cite{BorSal} for more details, including a representation formula for  $\mathcal{K} (\cdot,\lambda )$).

We note that  the spectral theorem shows that in the $L^2$ sense
\[
U_f(x,y)=\mathcal{K} (y , -L)f (x).
\]

The starting point of our approach is the following generalization of a result by Stinga and Torrea (see \cite{ST10}).

\begin{thm}\label{stinga-torrea}
Let  $f \in C_0^\infty(M)$. In the pointwise sense $U_f$ satisfies
\begin{equation}\label{extL}
\begin{cases}
 (L+ \mathcal B) U_f = 0\ \ \ \ \ \ \ \text{in} \, \, M \times (0,+\infty)
\\
U(\cdot ,0) = f \ \ \ \ \ \ \ \ \ \ \ \ \text{on} \, \, M.
\end{cases}
\end{equation}
\end{thm}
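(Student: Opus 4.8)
The idea is to differentiate the defining formula \eqref{extension_def} under the integral sign and exploit two facts: that $P_t f$ solves the heat-type equation $\partial_t P_t f = L P_t f$, and that the density $q_t(y)$ of the hitting time $\tau$ of $0$ by $\eta$ solves the backward Kolmogorov equation $\partial_t q_t(y) = \mathcal B q_t(y)$ for $y>0$, with appropriate behavior as $t\to 0^+$ and $t\to+\infty$. Concretely, I would first justify that for $f\in C_0^\infty(M)$ the function $U_f(x,y)=\int_0^{+\infty} P_tf(x)\, q_t(y)\, dt$ is smooth on $M\times(0,+\infty)$ and that differentiation in $x$ (through $L$) and in $y$ (through $\mathcal B$) may be passed inside the integral; this uses the local subellipticity of $L$ (so $P_tf$ and its $L$-derivatives are smooth and controlled), the smoothness of $q_t(y)$ in $y$ away from $0$, and decay estimates in $t$ coming from $f\in C_0^\infty$ and from the fact that $\eta$ hits $0$ a.s. so that $\int_0^\infty q_t(y)\,dt = \mathbb P_y(\tau<\infty)=1$.

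\textbf{The PDE.} Granting the interchange, one computes
\begin{align*}
\mathcal B U_f(x,y) &= \int_0^{+\infty} P_t f(x)\, \mathcal B q_t(y)\, dt = \int_0^{+\infty} P_t f(x)\, \partial_t q_t(y)\, dt,\\
L U_f(x,y) &= \int_0^{+\infty} (L P_t f)(x)\, q_t(y)\, dt = \int_0^{+\infty} (\partial_t P_t f)(x)\, q_t(y)\, dt.
\end{align*}
Integrating the first expression by parts in $t$ gives $\mathcal B U_f(x,y) = [P_tf(x)\,q_t(y)]_{t=0}^{t=+\infty} - \int_0^{+\infty}(\partial_t P_tf)(x)\,q_t(y)\,dt$. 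The boundary term at $t=+\infty$ vanishes (by decay of $P_tf$ or of $q_t(y)$), and the boundary term at $t=0$ should be handled by a limiting argument: $q_t(y)\to 0$ as $t\to 0^+$ for fixed $y>0$ since $\tau>0$ a.s. under $\mathbb P_y$. Hence $\mathcal B U_f + L U_f = 0$ in $M\times(0,+\infty)$, which is the first line of \eqref{extL}.

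\textbf{The boundary value.} For the trace $U_f(\cdot,0)=f$, I would use the probabilistic representation rather than the PDE: $\int_0^{+\infty} q_t(y)\, dt = 1$ for each $y>0$ because $\mathbb P_y(\tau<+\infty)=1$, and as $y\to 0^+$ the hitting time $\tau$ concentrates at $0$, so the measure $q_t(y)\,dt$ on $(0,\infty)$ converges weakly to $\delta_0$. Since $t\mapsto P_tf(x)$ is continuous at $t=0$ with $P_0f=f$ (again using $f\in C_0^\infty$ and strong continuity of the semigroup, together with a uniform bound $\|P_tf\|_\infty\le\|f\|_\infty$ from sub-Markovianity), one gets $U_f(x,y)=\int_0^\infty P_tf(x)q_t(y)\,dt \to f(x)$ as $y\to 0^+$. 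Alternatively one may invoke the spectral-theoretic identity $U_f(x,y)=\mathcal K(y,-L)f(x)$ together with $\mathcal K(0,\lambda)=\mathbb E^0(e^{-\lambda\tau})=1$.

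\textbf{Main obstacle.} The delicate point is the rigorous justification of differentiating under the integral and of the $t\to 0^+$ boundary term in the integration by parts, since $q_t(y)$ and its $t$-derivative may be singular as $t\to 0^+$. I expect to control this by cutting the integral at $t=\ep$, handling $\int_\ep^\infty$ cleanly, and showing the contribution of $\int_0^\ep$ and of the boundary term at $\ep$ both vanish as $\ep\to 0$, using that $f\in C_0^\infty(M)$ (so $Lf, L^2f,\dots$ are bounded) to write $P_tf = f + \int_0^t P_sLf\,ds$ and thereby trade the singular $t$-behavior of $q_t$ against the smoothness of $P_tf$ near $t=0$. Away from $y=0$ the ellipticity of $\mathcal B$ and the Schauder-type interior estimates (as already used in the excerpt for $\mathcal K(\cdot,\lambda)$) give the needed smoothness and local uniformity in $y$.
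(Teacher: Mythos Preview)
Your approach is correct in outline and the obstacles you flag are the right ones, but it is genuinely different from the paper's proof. The paper argues probabilistically: it first shows (Lemma~\ref{martin}) that
\[
M_t^f = e^{\int_0^{t\wedge\tau} V(X_u)\,du}\, U_f(X_{t\wedge\tau},\eta_{t\wedge\tau})
\]
is a martingale, by writing $M_\tau^f = e^{\int_0^\tau V(X_u)\,du} f(X_\tau)$ and using the strong Markov property together with the Feynman--Kac formula to compute $\mathbb E(M_\tau^f \mid \mathcal F_{s\wedge\tau}) = M_{s\wedge\tau}^f$. Once $M^f$ is known to be a martingale, It\^o's formula forces its finite-variation part to vanish identically, i.e.\ $\int_0^{t\wedge\tau} e^{\int_0^s V}\,(L+\mathcal B)U_f(X_s,\eta_s)\,ds = 0$ for all $t$, and letting $t\to 0$ yields $(L+\mathcal B)U_f(x,y)=0$ pointwise.

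The trade-off is this. Your analytic route (backward Kolmogorov equation $\partial_t q_t = \mathcal B q_t$, integration by parts in $t$) is more direct for the PDE itself, but you have to justify differentiation under the integral and the $t\to 0^+$ boundary term, as you note. The paper's probabilistic route avoids these issues entirely: no regularity of $q_t$ is invoked, and the martingale property comes from soft Markov-process arguments. More importantly for the paper, the martingale $M_t^f$ and its quadratic variation (also computed in Lemma~\ref{martin}) are the central objects in all the subsequent Gundy--Varopoulos representations and $L^p$ bounds; proving Theorem~\ref{stinga-torrea} as a corollary of Lemma~\ref{martin} is thus essentially free once that lemma is in place. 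Your argument, while self-contained for the PDE, would not produce this machinery.
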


We shall give a probabilistic proof of this result which is based on a martingale that shall be used several times in this paper.

\begin{lem}\label{martin}
Let $f \in C_0^\infty(M)$. Consider  the process
\[
M_{t}^f=e^{\int_0^{t \wedge \tau} V(X_u)du}U_f (X_{t \wedge \tau},\eta_{t \wedge \tau}).
\]
The process $M_{t}^f$ is a martingale with quadratic variation 
\[
\langle M^f\rangle_t=2 \int_0^{t \wedge \tau} e^{2\int_0^{s} V(X_u)du} \Gamma (U_{f}) (X_s,\eta_s) ds+2\int_0^{t \wedge \tau} e^{2\int_0^{s} V(X_u)du} \partial_yU_f (X_s,\eta_s)^2 a(\eta_s)^2 ds,
\]
where $\Gamma (U_{f})=\sum_{i,j=1}^n  \sigma_{ij} \frac{\partial U_{f}}{\partial x_i} \frac{\partial U_{f}}{\partial x_j}$.
\end{lem}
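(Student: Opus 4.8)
The plan is to apply Itô's formula to the process $M_t^f = e^{\int_0^{t\wedge\tau} V(X_u)\,du}\, U_f(X_{t\wedge\tau}, \eta_{t\wedge\tau})$ on the interval $t < \tau$, using that $(X_t)$ has generator $L = \Delta + V$ with carré du champ $\Gamma$, that $(\eta_t)$ satisfies the SDE \eqref{SDEeta1} with $d\langle\beta\rangle_t = 2\,dt$, and crucially that $X$ and $\eta$ (equivalently $\beta$) are independent, so there is no cross-variation term. First I would write $M_t^f = e^{A_t} U_f(X_t, \eta_t)$ with $A_t = \int_0^t V(X_u)\,du$ of finite variation, so $dM_t^f = e^{A_t}\big( V(X_t) U_f \, dt + dU_f(X_t,\eta_t)\big)$ for $t<\tau$. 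Applying Itô to $U_f(X_t,\eta_t)$ and splitting the second-order part into the $M$-directions and the $y$-direction, the drift of $dU_f$ is $(\Delta_x U_f)(X_t,\eta_t)\,dt + (\mathcal B_y U_f)(X_t,\eta_t)\,dt$ (the factor $2$ from $d\langle\beta\rangle = 2\,dt$ is exactly what turns $a(y)^2 \partial_y^2$ acting with the Itô correction into the operator $\mathcal B$ as written in \eqref{eq:1dim}). Hence the total $dt$-coefficient of $dM_t^f$ is $e^{A_t}\big( (L + \mathcal B) U_f\big)(X_t,\eta_t)$, which vanishes by Theorem \ref{stinga-torrea}. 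So on $t<\tau$ the process $M^f$ is a local martingale with only the martingale differentials $e^{A_t}\big(\sum_i (\text{horizontal noise}) + \partial_y U_f(X_t,\eta_t)\, a(\eta_t)\, d\beta_t\big)$ remaining.

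Next I would compute the quadratic variation. The horizontal martingale part contributes $e^{2A_s}$ times the bracket of the diffusion part of $U_f(X_s,\cdot)$, which by the definition of $\Gamma$ (and the fact that $\Delta = \sum \sigma_{ij}\partial_i\partial_j + \sum b_i \partial_i$ gives $d\langle M(U_f(X_\cdot))\rangle_s = 2\Gamma(U_f)(X_s,\eta_s)\,ds$ with the conventional normalization matching $d\langle\beta\rangle = 2dt$ and the $\frac12$ in $\Gamma$); the vertical part contributes $e^{2A_s}\,\partial_y U_f(X_s,\eta_s)^2\, a(\eta_s)^2 \, d\langle\beta\rangle_s = 2 e^{2A_s}\,\partial_y U_f(X_s,\eta_s)^2\, a(\eta_s)^2\, ds$; and by independence of $X$ and $\beta$ there is no cross term. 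Integrating from $0$ to $t\wedge\tau$ yields exactly the stated formula for $\langle M^f\rangle_t$.

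It remains to upgrade "local martingale up to $\tau$" to "martingale" on $[0,\infty)$, and this is the step I expect to be the main obstacle, since $U_f$ and its derivatives need not be bounded near $y=0$ or as $y\to\infty$, and $\tau$ is only a.s.\ finite, not bounded. I would localize with $\tau_n = \tau \wedge \sigma_n \wedge n$ where $\sigma_n = \inf\{t: \eta_t = n\}$ and, if needed, also an exit time of $X$ from a relatively compact set exhausting $M$, as in the proof of Lemma \ref{lem:eqB}; on each such bounded stopping interval the coefficients are bounded (smoothness of $U_f$ on $M\times(0,\infty)$, $V$ lower bounded, $f\in C_0^\infty$), so $M^f_{t\wedge\tau_n}$ is a genuine martingale. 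To pass $n\to\infty$ I would use that $e^{A_t}$ is bounded (as $V\le 0$, so $0 \le e^{A_t}\le 1$) together with a uniform bound on $U_f$: from the representation \eqref{extension_def}, $\|U_f(\cdot,y)\|_\infty \le \int_0^\infty \|P_t f\|_\infty q_t(y)\,dt \le \|f\|_\infty \int_0^\infty q_t(y)\,dt = \|f\|_\infty$ using sub-Markovianity of $P_t$ and that $q_t(y)$ is a probability density in $t$ (since $\mathbb P_y(\tau<\infty)=1$). Thus $|M^f_{t\wedge\tau_n}| \le \|f\|_\infty$, and dominated convergence plus $\tau_n \uparrow \tau$ (and continuity of $M^f$, which is constant after $\tau$ by the definition with $t\wedge\tau$) gives that $M^f_t$ is a bounded, hence true, martingale with the asserted quadratic variation.
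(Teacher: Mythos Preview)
Your computation of the drift and the quadratic variation via It\^o's formula is correct, and the boundedness argument $|U_f|\le \|f\|_\infty$ to pass from local to true martingale is clean. However, there is a \emph{circularity} problem: you invoke Theorem~\ref{stinga-torrea} to kill the drift $(L+\mathcal B)U_f$, but in the paper Theorem~\ref{stinga-torrea} is \emph{proved using} Lemma~\ref{martin}. Indeed, right after stating Theorem~\ref{stinga-torrea} the paper announces that it will give a probabilistic proof ``based on a martingale'', namely $M^f$; the argument is precisely that since $M^f$ is already known to be a martingale, the bounded-variation part in It\^o's formula must vanish, which forces $(L+\mathcal B)U_f=0$. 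So in the paper's logical order you cannot appeal to Theorem~\ref{stinga-torrea} here. (A minor slip: you write that $(X_t)$ has generator $L=\Delta+V$; its generator is $\Delta$, and the $V$ enters only through the exponential weight, as your own computation in fact reflects.)

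The paper avoids any PDE input by proving the martingale property directly from probability: one observes that $M_\tau^f=e^{\int_0^\tau V(X_u)du}f(X_\tau)$, computes via Feynman--Kac and the independence of $X$ and $\eta$ that $\mathbb E^{x,y}(M_\tau^f)=U_f(x,y)$, and then uses the strong Markov property at $s\wedge\tau$ to get $\mathbb E(M_\tau^f\mid\mathcal F_{s\wedge\tau})=M_{s\wedge\tau}^f$. This yields a genuine (not just local) martingale immediately, and the quadratic variation then drops out of It\^o's formula exactly as you wrote. Your It\^o-first approach can be rescued if you supply an independent (analytic) proof of $(L+\mathcal B)U_f=0$---for instance via the spectral identity $U_f=\mathcal K(y,-L)f$ together with $\mathcal B\mathcal K(\cdot,\lambda)=\lambda\mathcal K(\cdot,\lambda)$---but as written it leans on a result that, in this paper, is downstream of the lemma you are proving.
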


\begin{proof}
First note that 
\[
M_{\tau}^f=e^{\int_0^{\tau} V(X_u)du} f (X_{\tau}).
\]
Assume that the process $(X_t,\eta_t)_{t\ge 0}$ starts at $(x,y)\in M\times (0,\infty)$. Since the processes $X_t$ and $\eta_t$ are independent, it follows from the Feynman-Kac formula that 
\begin{align*}
\mathbb{E}^{x,y}\left(e^{\int_0^{\tau} V(X_u)du} f(X_{\tau} ) \right)
&=
\int_0^{\infty} \mathbb{E}^x\left(e^{\int_0^{s} V(X_u)du} f(X_{s} ) \right)q_s(y) ds
\\ &=
\int_0^{\infty} P_sf(x) q_s(y) ds=U_f(x,y),
\end{align*}
where we recall that $q_s(y)$ is the density of $\tau$ under $\eta_0=y>0$. Denote by $(\mathcal F_t)_{t\ge 0}$ the natural filtration of $(X_t,\eta_t)$.
From the strong Markov property we have 
\begin{align*}
\mathbb{E}\left(M_{\tau}^f \mid \mathcal F_{s \wedge \tau}\right)
&=\mathbb{E}\left(e^{\int_0^{\tau} V(X_u)du} f(X_\tau) 1_{\tau\le s}\mid \mathcal F_{s \wedge \tau} \right)+\mathbb{E}\left(e^{\int_0^{\tau} V(X_u)du} f(X_{\tau}) 1_{\tau>s}\mid \mathcal F_{s \wedge \tau}\right)
\\ &
=e^{\int_0^{\tau} V(X_u)du}f(X_\tau) 1_{\tau\le s}+e^{\int_0^{s\wedge \tau} V(X_u)du}U_f (X_{s \wedge \tau},\eta_{s \wedge \tau})  1_{\tau>s}
\\ &
=M_{s\wedge \tau}^f .
\end{align*}
We conclude that $M_{t}^f $ is a martingale. The quadratic variation of $M_{t}^f $ is computed as in  \cite[p.324]{RevuzYor} or \cite[p.~181]{BakEme1}. Indeed, from It\^o's formula,  the bounded variation part of $M_t^f$ is zero. Hence

\begin{equation}\label{eq:Ito}
\begin{split}
M_{t}^f=U_f(x,y)+\sum_{i=1}^n \int_0^{t\wedge \tau} e^{\int_0^{s}V(X_u)du} \bigg( \sum_{j=1}^n v_{ij}\partial_{x_j} \bigg)U_f(X_s, \eta_s)dB_s^i \\+ \int_0^{t\wedge \tau} e^{\int_0^{s}V(X_u)du} \partial_y U_f(X_s, \eta_s)a(\eta_s)d\beta_s,
\end{split}
\end{equation}
where $(v_{ij}(x))$ is the square root of the symmetric nonnegative matrix $(\sigma_{ij}(x))$ and $B_t=(B_t^1,\cdots,B_t^n)$ is a Brownian motion on $\R^n$ with generator $\Delta=\sum_{i=1}^n \frac{\partial^2}{\partial x_i^2}$. Thus $\langle M^f\rangle_t$ immediately follows.
\end{proof}

We are now in position to prove Theorem \ref{stinga-torrea}. 

\begin{proof}[Proof of Theorem \ref{stinga-torrea}]
Since $M_{t}^f=e^{\int_0^{t\wedge \tau} V(X_u)du}U_f (X_{t \wedge \tau},\eta_{t \wedge \tau})$ is a martingale, it follows from It\^o's formula that the bounded variation part of $M_t^f$ is zero, i.e.,
\[
\int_0^{t\wedge \tau} e^{\int_0^{s} V(X_u)du} (L + \mathcal B) U_f (X_s, \eta_s) ds=0.
\] 
We conclude that 
\[
 (L + \mathcal B) U_f(x,y) =\lim_{t\to 0} \frac1t\int_0^{t\wedge \tau} e^{\int_0^{s} V(X_u)du} (L + \mathcal B) U_f (X_s, \eta_s) ds=0.
\]
\end{proof}

\subsection{Martingale inequalities}

In this section, we recall some results on the martingale inequalities used in this paper. 
Suppose that $(\Omega, \F, \bP)$ is a complete probability space, filtered by $\F=\{\F_t\}_{t\ge 0}$, a family of right continuous sub-$\sigma$-fields of $\F$. Assume that $\F_0$ contains all the events of probability $0$. Let $X$ and $Y$ be adapted, real-valued martingales which have right-continuous paths with left-limits (r.c.l.l.).  The martingale $Y$ is differentially subordinate to $X$ if $|Y_0|\le |X_0|$ and $\ang{X}_t-\ang{Y}_t$ is a nondecreasing and nonnegative function of $t$.  The martingales $X_t$ and $Y_t$ are said to be orthogonal if  the covariation process $\ang{X, Y}_t=0$ for all $t$.  

We always assume that the martingales are $L^p$ bounded for $1<p<\infty$ and by $X$ we mean $X_{\infty}$.  By $\|X\|_{p}$ we  mean $\sup_{t>0}\|X_t\|_p=\|X_{\infty}\|_p$.  This is often applied to stopped martingales. Hence $\|X\|_p=\|X_{\tau}\|_p$ where $\tau$ is a stopping time.  We use the notation $\|f\|_p$ for the $L^p$-norms of functions defined on $M$ with respect to the measure $\mu$. This is clear in each occurrence  and should not create any confusion. 

In the following, we recall the sharp inequalities of martingale transforms proved by Ba\~nuelos and Wang \cite{BW}, as well as an extension by Ba\~nuelos and  Os{\c e}kowski \cite{BO15}. 
\begin{thm}[\protect{\!\!\cite[Theorems 1 and 2]{BW}}]\label{thm:BW95}
Let $X$ and $Y$ be two martingales with continuous paths such that $Y$ is differentially subordinate to $X$. Fix  $1<p<\infty$ and set $p^*=\max\{p,\frac{p}{p-1}\}$.  Then 
\begin{equation*}\label{eq:MT}
\|Y\|_p \le (p^*-1)\|X\|_p.
\end{equation*}
Furthermore, suppose that the martingales $X$ and $Y$ are orthogonal.  Then 
\begin{equation*}\label{eq:OMT}
\|Y\|_p \le \cot\!\brak{\frac{\pi}{2 p^*}}\|X\|_p. 
 \end{equation*}
Both of these inequalities are sharp.
\end{thm}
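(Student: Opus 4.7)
The plan is to follow Burkholder's method of special functions, which is the standard route to sharp martingale inequalities of this type. For each inequality we construct a function $U : \R^2 \to \R$ with two properties: a pointwise majorization $U(x,y) \ge |y|^p - C^p |x|^p$ for the appropriate constant $C$, and an integral property ensuring that $t \mapsto \mathbb{E}[U(X_t,Y_t)]$ is nonincreasing whenever $(X,Y)$ satisfies the relevant structural hypothesis. Combined with the initial bound $U(X_0, Y_0) \le 0$, these two properties yield $\mathbb{E}[|Y_t|^p] \le C^p\, \mathbb{E}[|X_t|^p]$ after passing to the limit $t \to \infty$.

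For the first inequality I would use Burkholder's function
\[
U_p(x,y) = \alpha_p \bigl(|y| - (p^*-1)|x|\bigr)\bigl(|x|+|y|\bigr)^{p-1},
\qquad \alpha_p = p\bigl(1-1/p^*\bigr)^{p-1}.
\]
The majorization $U_p(x,y) \ge |y|^p - (p^*-1)^p|x|^p$ reduces to a one-variable convex-analysis check on rays through the origin. The dynamical step is to verify that $U_p$ is smooth off the lines $\{|x|=|y|\}$ with Hessian satisfying $\ang{H_{U_p}(x,y)(h,k),(h,k)} \le 0$ whenever $|k|\le|h|$; this is precisely the infinitesimal form of differential subordination. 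Applying It\^o's formula to $U_p(X_t,Y_t)$, after a mollification to handle the singular lines, gives
\[
\mathbb{E}\bigl[U_p(X_t,Y_t)\bigr] - U_p(X_0,Y_0) \;=\; \tfrac{1}{2}\,\mathbb{E}\!\int_0^t \mathrm{tr}\bigl(H_{U_p}(X_s,Y_s)\, d\ang{(X,Y)}_s\bigr),
\]
and the Hessian bound together with differential subordination forces the integrand to be nonpositive. Since $|Y_0|\le|X_0|$ implies $U_p(X_0,Y_0)\le 0$, the majorization closes the argument.

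For the orthogonal inequality I would use a harmonic-analytic special function of the form
\[
V_p(x,y) = |y|^p - c_p^p |x|^p - W(x,y), \qquad c_p = \cot\!\brak{\frac{\pi}{2p^*}},
\]
where $W$ is the real part of an analytic function built from a conformal map of a plane sector of opening $\pi/p^*$. The orthogonality assumption $\ang{X,Y}\equiv 0$ kills the mixed second-order term in the It\^o expansion, so one only needs superharmonicity of $V_p$ rather than the stronger Hessian bound of the first part. The required superharmonicity together with the boundary inequality $V_p\le 0$ on the relevant region is a classical planar computation, and the factor $\cot(\pi/(2p^*))$ arises precisely as the ratio produced by that sector geometry.

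The principal technical obstacle in both parts is that the special functions are not $C^2$, so It\^o's formula is not directly applicable. The standard remedy is to convolve with a smooth bump, perform the martingale argument for the mollified function, and pass to the limit using dominated convergence together with careful control of the contribution of the martingales near the singular set $\{|x|=|y|\}$. Sharpness of the constants is established separately by exhibiting near-extremal pairs: a simple random walk stopped at an exit time saturates $p^*-1$, while a suitable conformal Brownian martingale in the plane saturates $\cot(\pi/(2p^*))$.
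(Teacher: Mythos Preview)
The paper does not prove this theorem at all: it is quoted from \cite{BW} as a known sharp inequality and used as a black box throughout. So there is no ``paper's own proof'' to compare against.

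Your outline is the correct one and is essentially how the result is established in the cited reference: Burkholder's special-function method, with the explicit function $U_p(x,y)=\alpha_p(|y|-(p^*-1)|x|)(|x|+|y|)^{p-1}$ for the subordination inequality and a Pichorides-type function built from the conformal map of a sector for the orthogonal case. One small correction: in the orthogonal part you still have differential subordination as a hypothesis (the theorem says ``furthermore''), so the drift term after killing the cross term is $\tfrac12\bigl(V_{xx}\,d\ang{X}_s + V_{yy}\,d\ang{Y}_s\bigr)$ with $d\ang{Y}_s \le d\ang{X}_s$; mere superharmonicity $V_{xx}+V_{yy}\le 0$ is not by itself the right condition, you also need the sign of $V_{yy}$ (or equivalently the diagonal Hessian inequality $V_{xx}h^2+V_{yy}k^2\le 0$ for $|k|\le|h|$). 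The function coming from the sector map does satisfy this stronger condition, so the argument goes through, but your description slightly understates what is needed.
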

\begin{thm}[\protect{\!\cite[Theorem 2.2]{BO15}}]\label{thm:BO15}
Let $X$ and $Y$ be two martingales with continuous paths such that $Y$ is differentially subordinate to $X$.
Consider the process 
\[
Z_t=e^{\int_0^{t} V_sds }\int_0^t e^{-\int_0^{s} V_vdv }dY_s,
\]
where $(V_t)_{t\ge 0}$ is a non-positive adapted and continuous process. 
For $1<p<\infty$, we  have the sharp bound
\begin{equation*}\label{MT}
\|Z\|_p \le (p^*-1)\|X\|_p.
\end{equation*}
\end{thm}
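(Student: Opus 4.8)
The plan is to prove Theorem \ref{thm:BO15} (the statement being the Ba\~nuelos--Os{\c e}kowski bound on $Z$) by reducing it to the Ba\~nuelos--Wang differential subordination inequality of Theorem \ref{thm:BW95} via a stochastic-integration-by-parts argument that verifies $Z$ is itself differentially subordinate to $X$, \emph{up to a stopping-time and localization argument} exploiting the non-positivity of $V$. First I would observe that $Z_t = e^{\int_0^t V_s\,ds}\int_0^t e^{-\int_0^s V_v\,dv}\,dY_s$ is a continuous semimartingale, and compute $dZ_t$ by It\^o's formula applied to the product of the finite-variation process $A_t := e^{\int_0^t V_s\,ds}$ and the continuous local martingale $N_t := \int_0^t e^{-\int_0^s V_v\,dv}\,dY_s$. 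Since $A$ has finite variation, there is no bracket term, and one gets
\[
dZ_t = A_t\,dN_t + N_t\,dA_t = dY_t + V_t Z_t\,dt,
\]
so that $Z$ has martingale part exactly $\int_0^\cdot dY_s = Y_\cdot - Y_0$ and a finite-variation (drift) part $\int_0^\cdot V_s Z_s\,ds$. In particular $\langle Z\rangle_t = \langle Y\rangle_t$.

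The next step is the key structural point: $Z$ is not a martingale, so Theorem \ref{thm:BW95} does not apply directly. The standard device (as in Ba\~nuelos--Os{\c e}kowski) is to pass to a space-time or a pair-of-martingales formulation. Concretely, I would introduce the martingale $\widetilde Y_t := Y_t - Y_0 + \int_0^t \text{(something)}$ — more precisely, I would use the fact that $\|Z\|_p$ can be estimated by comparing $Z$ with the martingale $Y$ through the sign structure of the drift. The cleanest route: define $F(t) = \|Z_t\|_p^p$ and differentiate, or alternatively use the Burkholder functional approach. Because $V_t \le 0$, the drift $\int_0^\cdot V_s Z_s\,ds$ pushes $|Z|$ toward zero on average in the sense that $d|Z_t|^p$ has a drift contribution $p|Z_t|^{p-2} Z_t \cdot V_t Z_t\,dt = p V_t |Z_t|^p\,dt \le 0$. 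Thus $|Z_t|^p$ is a submartingale minus a decreasing process coming from $V$, and the martingale part of its Doob--Meyer-type decomposition is controlled by $\langle Z\rangle = \langle Y\rangle \le \langle X\rangle$. The rigorous implementation is to apply the Burkholder method: there is a biconcave/biconvex special function $U_p$ with $U_p(x,y) \ge |y|^p - (p^*-1)^p|x|^p$ and $U_p(x,y)\le 0$ on the diagonal region, and $U_p(X_t, Z_t)$ composed with It\^o's formula yields a supermartingale precisely because (i) the martingale differential subordination $\langle X\rangle - \langle Z\rangle = \langle X\rangle-\langle Y\rangle \ge 0$ handles the bracket terms, and (ii) the extra drift term $\partial_y U_p(X_t,Z_t)\cdot V_t Z_t\,dt$ has a favorable sign because $y\,\partial_y U_p(x,y)\ge 0$ and $V_t\le 0$.

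I would then carry out the localization carefully: take $\tau_n$ the exit time of $(X,Z)$ from an increasing sequence of bounded sets so that all the stochastic integrals are true martingales and the optional stopping theorem applies to $U_p(X_{t\wedge\tau_n}, Z_{t\wedge\tau_n})$, giving $\mathbb{E}\,U_p(X_{t\wedge\tau_n}, Z_{t\wedge\tau_n}) \le \mathbb{E}\,U_p(X_0, Z_0)\le 0$ using $|Z_0| = 0 \le |X_0|$ (since $Y_0$ is differentially subordinate to $X_0$ but $Z_0 = 0$). From the majorization $|y|^p \le U_p(x,y) + (p^*-1)^p|x|^p$ this yields $\mathbb{E}|Z_{t\wedge\tau_n}|^p \le (p^*-1)^p\,\mathbb{E}|X_{t\wedge\tau_n}|^p \le (p^*-1)^p\|X\|_p^p$, and then Fatou's lemma as $n\to\infty$ and $t\to\infty$ gives $\|Z\|_p \le (p^*-1)\|X\|_p$. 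The main obstacle I anticipate is (a) making the sign analysis of the extra drift term $V_t\,\partial_y U_p(X_t,Z_t)\,Z_t$ rigorous — one needs the precise monotonicity property $y\,\partial_y U_p(x,y)\ge 0$ of Burkholder's function, which holds but requires citing or re-deriving the structure of $U_p$; and (b) the integrability/localization bookkeeping needed to ensure the local martingale terms vanish in expectation after stopping, together with uniform integrability to pass to the limit. Sharpness, already noted in the theorem statement, follows from the sharpness of Theorem \ref{thm:BW95} by taking $V\equiv 0$, so no separate argument is needed there.
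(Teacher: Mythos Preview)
The paper does not prove Theorem~\ref{thm:BO15}: it is quoted verbatim from \cite[Theorem~2.2]{BO15} and used as a black box in the proof of Theorem~\ref{thm:W}. So there is no ``paper's own proof'' to compare against.

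That said, your sketch is essentially the argument of \cite{BO15}. The decomposition $dZ_t = dY_t + V_t Z_t\,dt$ (hence $\langle Z\rangle = \langle Y\rangle$) is correct, and the Burkholder-function approach you outline is exactly how the cited reference proceeds: one applies It\^o's formula to $U_p(X_t,Z_t)$, the bracket terms are controlled by differential subordination $d\langle Y\rangle \le d\langle X\rangle$, and the extra finite-variation term coming from the drift of $Z$ contributes $\partial_y U_p(X_t,Z_t)\,V_t Z_t\,dt$, which is nonpositive because $V_t\le 0$ and Burkholder's function satisfies $y\,\partial_y U_p(x,y)\ge 0$. Your identification of the two technical points that need care---the sign property of $y\,\partial_y U_p$ and the localization/Fatou step---is accurate; both are handled in \cite{BO15}. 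The observation that sharpness follows by taking $V\equiv 0$ and invoking the sharpness in Theorem~\ref{thm:BW95} is also correct.
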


\section{Multiplier theorems}

The martingale transform method to construct multipliers is very versatile and allows one to deal with a very general setup.  We work under the assumptions and with the notations of Section 2.  In particular, let $G$ be the Green function of $\mathcal B$ on the half-line $[0,+\infty)$ with Dirichlet boundary condition at 0 (see Lemma \ref{lem:Green}).  
Before going further, in order to put the next results in perspective, we discuss  the well-known H\"ormander-Mikhlin theorem in $\mathbb R^n$. Let us assume that $\Delta=\sum_{i=1}^n \frac{\partial^2}{\partial x_i^2}$ is the Laplace operator in $\mathbb{R}^n$ and that $V=0$. Recall the H\"ormander-Mikhlin theorem:
\begin{thm}[\protect{\!\!\cite[Theorem 5.2.7]{grafakos}}]\label{T:HormanderMult}
Let $m\in L^\infty(\mathbb R^n \backslash \{ 0 \})$ be a complex-valued bounded function on $\mathbb R^n \backslash \{ 0 \}$ that satisfies:
\begin{itemize}
\item[(a)] Either the Mikhlin's condition 
$$
|\partial ^\gamma_\xi m(\xi) | \leq A |\xi|^{-|\gamma|}
$$
for all multi-indices $|\gamma| \leq \Big [ \frac{n}{2} \Big ]+1.$
\item[(b)] Or the H\"ormander's condition
$$\sup_{R>0} R^{-n+2|\gamma|} \int_{R < |\xi| <2R} |\partial ^\gamma_\xi m(\xi) |^2 \,d\xi \leq A^2 <\infty 
$$
for all multi-indices $|\gamma| \leq \Big [ \frac{n}{2} \Big ]+1.$
\end{itemize}
Then the operator $T_m$ whose symbol is $m$ is bounded in $L^p (\mathbb R^n)$ for any $p \in (1,+\infty)$, with an operator norm depending on $n,A,p,\|m\|_\infty$.
 \end{thm}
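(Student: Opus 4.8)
The plan is to deduce Theorem~\ref{T:HormanderMult} from the Calder\'on--Zygmund theory of singular integral operators, in the classical way. The point of departure is that $T_m$ is automatically bounded on $L^2(\Rn)$, since by Plancherel's theorem $\norm{T_m f}_2 = \norm{m\widehat f}_2 \le \norm{m}_\infty \norm{f}_2$. It therefore suffices to produce a convolution kernel $K$ --- the inverse Fourier transform of $m$, which a priori is only a tempered distribution but agrees on $\Rn\setminus\{0\}$ with a locally integrable function --- and to verify H\"ormander's regularity condition
\[
\sup_{y\ne 0}\ \int_{\abs{x}\ge 2\abs{y}} \abs{K(x-y)-K(x)}\,dx < \infty .
\]
Granted this, the Calder\'on--Zygmund theorem yields the weak-type $(1,1)$ bound for $T_m$; Marcinkiewicz interpolation combined with the $L^2$ bound then gives $L^p$ boundedness for $1<p\le 2$; and a duality argument finishes the range $2\le p<\infty$, because the adjoint of $T_m$ is the Fourier multiplier operator with symbol $\overline{m(-\xi)}$, which manifestly satisfies the same hypothesis (a) or (b) with the same constant $A$.

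The substance of the proof is the kernel estimate. I would fix a Littlewood--Paley partition of unity: choose $\psi\in C_0^\infty(\Rn)$ supported in $\{1/2\le\abs{\xi}\le 2\}$ with $\sum_{j\in\Z}\psi(2^{-j}\xi)=1$ for $\xi\ne 0$, and set $m_j(\xi)=m(\xi)\,\psi(2^{-j}\xi)$, with $K_j$ the inverse Fourier transform of $m_j$ (a smooth, integrable function, since $m_j$ has compact support and is bounded). The classical mechanism --- writing $x^\alpha K_j(x)$ as a constant times the inverse transform of $\partial^\alpha m_j$, then estimating its $L^\infty$ norm by the $L^1$ norm of $\partial^\alpha m_j$, which is in turn controlled via Cauchy--Schwarz by the $L^2$ norm of $\partial^\alpha m_j$ over the annulus $\{2^{j-1}\le\abs{\xi}\le 2^{j+1}\}$ of volume $\sim 2^{jn}$ --- produces, for every multi-index with $\abs{\gamma}\le[n/2]+1$, the scale-invariant bound
\[
\abs{\partial^\gamma K_j(x)} \le C A\, 2^{j(n+\abs{\gamma})}\brak{1+2^j\abs{x}}^{-N},\qquad N=\Big[\tfrac n2\Big]+1 .
\]
Under hypothesis (a) this is a direct integration-by-parts computation; under hypothesis (b) one first expands $\partial^\gamma\brak{m(\xi)\psi(2^{-j}\xi)}$ by the Leibniz rule and estimates each term using (b) on the relevant dyadic annulus. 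Summing the resulting geometric series in $j$, splitting at $2^j\abs{x}\sim 1$, gives $\abs{K(x)}\le C A\abs{x}^{-n}$ and $\abs{\nabla K(x)}\le CA\abs{x}^{-n-1}$ for $x\ne 0$; feeding the gradient bound into the mean value theorem yields H\"ormander's condition displayed above, with constant $\le CA$.

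The one genuinely delicate point --- the hard part --- is deriving the per-piece bound above from hypothesis (b) rather than (a): there one does not have pointwise control of the derivatives of $m$, only $L^2$ averages on dyadic annuli, so every term of the Leibniz expansion of $\partial^\gamma\brak{m\,\psi(2^{-j}\cdot)}$ must be handled by Cauchy--Schwarz against that average, and one must check that taking $N=[n/2]+1$ derivatives is exactly enough to overcome the volume factor $2^{jn}$ and produce a summable series --- this is precisely where the hypothesis is calibrated, since $[n/2]+1>n/2$. Once the kernel estimates are secured, the remaining ingredients (the Calder\'on--Zygmund decomposition proving weak $(1,1)$, Marcinkiewicz interpolation, and the duality step) are entirely standard and I would only indicate them. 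I would also note that the final $L^p$ operator norm depends on $n$, $A$, $p$ and $\norm{m}_\infty$, a dependence one tracks through the interpolation and which illustrates exactly the dimension dependence that the probabilistic approach developed in the rest of this paper is designed to circumvent.
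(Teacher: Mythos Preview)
First, a framing remark: the paper does not prove this theorem. It is quoted from Grafakos as a classical benchmark, precisely to highlight that its constant depends on the dimension $n$, in contrast with the probabilistic methods developed afterward. So there is no in-paper argument to compare against; your sketch should be measured against the standard proof (which is indeed what Grafakos does).

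Your overall architecture---Littlewood--Paley decomposition of $m$, kernel regularity, Calder\'on--Zygmund weak $(1,1)$, interpolation, duality---is correct, and your per-piece estimate
\[
|\partial^\gamma K_j(x)| \le CA\,2^{j(n+|\gamma|)}(1+2^j|x|)^{-N},\qquad N=\Big[\tfrac{n}{2}\Big]+1,
\]
is also correct. The gap is in the next sentence, where you sum over $j$ to conclude $|K(x)|\le CA|x|^{-n}$ and $|\nabla K(x)|\le CA|x|^{-n-1}$. This fails: for the tail $2^j|x|>1$ one faces
\[
\sum_{2^j>|x|^{-1}} 2^{jn}(2^j|x|)^{-N}=|x|^{-N}\sum_{2^j>|x|^{-1}} 2^{j(n-N)},
\]
and since $N=[n/2]+1\le n$ for every $n\ge 1$, the exponent $n-N$ is nonnegative and the series diverges. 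Pointwise bounds on $K$ and $\nabla K$ are simply unavailable with only $[n/2]+1$ derivatives on the symbol; they require roughly $n+1$ derivatives---which is exactly the stronger hypothesis in McConnell's probabilistic proof the paper cites a few lines later.

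The standard fix, and the actual content of the Grafakos proof, is to abandon pointwise kernel bounds and verify the integral H\"ormander condition directly from $L^2$ estimates. Plancherel and hypothesis~(b) give $\||\cdot|^{|\alpha|} K_j\|_2\le CA\,2^{j(n/2-|\alpha|)}$ for $|\alpha|\le N$; Cauchy--Schwarz then yields $\|\nabla K_j\|_{L^1}\le CA\,2^{j}$ and $\int_{|x|>R}|K_j(x)|\,dx\le CA\,(2^jR)^{-(N-n/2)}$. Combining the two regimes,
\[
\int_{|x|\ge 2|y|}|K_j(x-y)-K_j(x)|\,dx\ \le\ CA\,\min\!\Big(2^j|y|,\ (2^j|y|)^{-(N-n/2)}\Big),
\]
and \emph{this} sum over $j$ converges because $N-n/2>0$. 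That inequality is where the calibration $[n/2]+1>n/2$ actually enters, not in a pointwise summation. The remaining steps (Calder\'on--Zygmund decomposition, Marcinkiewicz interpolation, duality) then go through exactly as you described.
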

 
 \begin{remark}
 We point out  there are sharper versions of the previous theorem replacing the Lebesgue spaces by Lorentz spaces (see e.g. \cite{grafakossharp}) and also more general variants (see e.g. \cite{DSY}).
 \end{remark}
 
 In our framework, we want to study boundedness properties of the multiplier $\Phi (-L)$ where
\begin{equation*}
\Phi(\lambda) = \int_0^{+\infty} G(+\infty,y) \partial_y\mathcal{K} (y , \lambda)^2  a(y)^2 dy. 
\end{equation*}

From now on and throughout the paper we will make  the following standing assumptions on the functions involved in $\Phi$:
\begin{itemize}
\item $a$ is bounded on $(0,+\infty)$;
\item The functions $G(+\infty,y)$ and $G(z,y)$ defined in Lemma \ref{lem:Green} are both of at most polynomial growth when $y, z \to +\infty$;
\item The functions $\mathcal{K}$  and $\partial_y \mathcal{K}$ defined by \eqref{eq:extension} are both rapidly decreasing when $y, \lambda \to +\infty$.  
\end{itemize} 

To apply the previous theorem, we define $m(\xi)=\Phi(|\xi|^2)$ and to simplify the discussion we will consider only the Mikhlin's condition, since the conclusion of the discussion will be the same for H\"ormander's case. It is clear that if $\mathcal{K}(y,|\xi|^2)$ satisfies a Mikhlin-type condition (i.e. enough derivatives in $\xi$ decay away from zero), then the Leibniz rule (together with dominated convergence) implies that so is $m$. Therefore, Theorem \ref{T:HormanderMult} gives that $\Phi(-L)$ is bounded on $L^p$ for every $p \in (1,\infty)$. Notice however that the constant in the estimate depends on the dimension $n$. In connection to other probabilistic (martingale) proofs of Theorem \ref{T:HormanderMult}, we mention that in  \cite[Theorem 1.1]{McC},  McConnell gave a probabilistic proof under the assumption that $|\gamma|\leq n+1$ in the context of functions taking values in a Banach spaces with the UMD (unconditional
martingale difference sequence) property. There too, the constants depend on the dimension even in the case when the UMD space is $\R$. 

%
%
In this section, we will prove a Gundy-Varopoulos representation for  $\Phi(-L)$ therefore  allowing us to prove the boundedness in $L^p$ with an {\sl explicit} constant independent of the dimension (see Theorem \ref{thm:W}). In some special cases of diffusion $\mathcal B$, one can compute explicitly $G$ and $\mathcal K$, allowing to check the previous decay assumptions and then leading to some special multipliers.

\subsection{Construction of the martingale transform associated to a multiplier}

We consider then the multiplier defined for $ f\in C_0^\infty(M)$ by 
\[
Wf=\Phi (-L) f,
\]
where
\begin{equation}\label{eq:W}
\Phi(\lambda) = \int_0^{+\infty} G(+\infty,y) \partial_y\mathcal{K} (y , \lambda)^2  a(y)^2 dy. 
\end{equation}

We again note that due to the growth assumptions on $G$ and $\partial_y \mathcal{K}$, the function $\Phi$ is bounded. Moreover, using the extension function $U_f$ defined in \eqref{extension_def}, one can see that $W$ satisfies the following integration by parts formula: for every $f,g \in C_0^\infty(M)$,
\[
\int_M g Wf d\mu=\int_0^{+\infty} \int_M  \partial_y U_f (y,x)  \partial_y U_g (y,x) G(+\infty,y)a(y)^2 \, d\mu(x) dy.
\]
Let now $\mathbb P^{x,y}$ be the probability measure associated with the stochastic process $(X_t,\eta_t)$ starting at the point $(x,y)$ with $x\in M$ and $y>0$, define a measure $\mathbb P^y$  by 
\[
\mathbb P^y((X_{t \wedge \tau},\eta_{t \wedge \tau})\in \Theta)=\int_{M} \mathbb P^{x,y}((X_{t \wedge \tau}, \eta_{t \wedge \tau})\in \Theta)d\mu(x)
\]
for any Borel set $\Theta\in  M\times \R^+$. In particular, since $\mu$ is invariant for the semigroup for any Borel set $\Theta\in M$, $\mathbb P^y(X_{\tau}\in \Theta)=\mu(\Theta)$. From this it follows that  for any nonnegative (or integrable function) $f$ on $M$, we have $\mathbb{E}^y(f(X_{\tau}))=\int_M f(x)d\mu(x)$. In particular,  for any $1<p<\infty$, the $L^p$ norm of the random variable $Z=f(X_{\tau})$ equals the $L^p(M,\mu)$ norm of the function $f$.  In this case we will just write 
$\|f(X_{\tau})\|_{L^p}=\|f\|_{L^p(M,\mu)}$.

\begin{thm}\label{thm:TiSijW-rep 1}
We have the following Gundy-Varopoulos type representation for $W$: for every $ f\in C_0^\infty (M)$ and $x \in M$,
\begin{align*}
W f (x) &=\frac12 \lim_{y_0 \to +\infty} \mathbb{E}^{y_0} \left( e^{\int_0^{\tau} V(X_u)du} \int_0^{\tau} e^{-\int_0^{s} V(X_u)du} \partial_{y} U_f (X_s,\eta_s) a(\eta_s) \, d\beta_s \mid X_\tau =x  \right).
\end{align*}
\end{thm}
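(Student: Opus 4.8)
The natural approach is to identify the conditional expectation on the right-hand side with an integral against the heat/Green kernel and check it equals $\Phi(-L)f$. First I would work at fixed $y_0>0$ and set aside the limit. Write $I_t = e^{\int_0^{t\wedge\tau} V(X_u)\,du}\int_0^{t\wedge\tau} e^{-\int_0^s V(X_u)\,du}\,\partial_y U_f(X_s,\eta_s)\,a(\eta_s)\,d\beta_s$. By Theorem~\ref{thm:BO15} (with $X$ the martingale part of $M^f$ from Lemma~\ref{martin} and $V_s=V(X_s)\le 0$), $(I_t)$ is an $L^p$-bounded martingale for every $p>1$, so $I_\infty := I_\tau$ is well defined in $L^p$ and the conditional expectation $\mathbb{E}^{y_0}(I_\tau\mid X_\tau = x)$ makes sense. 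Moreover, since $\partial_y U_f$ and $a$ are bounded (using the standing decay assumptions on $\mathcal K,\partial_y\mathcal K$ and $a$), the integrand is bounded and $I_t$ is in fact a true $L^2$-martingale.

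\textbf{Step 1: conditioning and the occupation-time identity.} The key computation is to evaluate $\mathbb{E}^{y_0}\big( I_\tau \cdot g(X_\tau)\big)$ for a test function $g\in C_0^\infty(M)$, since by the remarks preceding the theorem $\mathbb{E}^{y_0}(g(X_\tau)h(X_\tau)) = \int_M gh\,d\mu$ identifies conditional expectation given $X_\tau$ with a function on $M$ integrated against $\mu$. Using the martingale representation \eqref{eq:Ito} for $M^g$ (i.e.\ the analog of Lemma~\ref{martin} with $f$ replaced by $g$), $g(X_\tau)e^{\int_0^\tau V(X_u)\,du} = M^g_\tau = U_g(x_0,y_0) + (\text{stochastic integrals})$, we compute the bracket $\langle I, M^g\rangle_\tau$. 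Since $\beta$ is independent of $B$, only the $d\beta_s$-parts pair up, giving
\[
\mathbb{E}^{y_0}\langle I, M^g\rangle_\tau = \mathbb{E}^{y_0}\left( 2\int_0^\tau \partial_y U_f(X_s,\eta_s)\,\partial_y U_g(X_s,\eta_s)\,a(\eta_s)^2\,ds\right),
\]
where the factor $e^{\int_0^\tau V}e^{-\int_0^s V}$ times $e^{\int_0^s V}$ from $M^g$ collapses the exponential weights — this is exactly the mechanism that makes the $V$-weighted integrand integrate cleanly. Now use independence of $X$ and $\eta$ to write this expectation as $\int_M d\mu(x)\int_0^\infty q_s(y_0)\cdots$; freezing the $X$-part, the $\eta$-occupation integral is handled by Lemma~\ref{lem:eqB}, which turns $\mathbb{E}_{y_0}\int_0^\tau (\cdot)(\eta_s)\,ds$ into $\int_0^\infty G(y_0,z)(\cdot)(z)\,dz$; letting $y_0\to+\infty$ replaces $G(y_0,z)$ by $G(+\infty,z)$ (dominated convergence, using the polynomial growth of $G$ against the rapid decay of $\partial_y\mathcal K$). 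This yields
\[
\lim_{y_0\to\infty}\mathbb{E}^{y_0}\langle I,M^g\rangle_\tau = 2\int_M\int_0^\infty \partial_y U_f(x,z)\,\partial_y U_g(x,z)\,G(+\infty,z)\,a(z)^2\,dz\,d\mu(x),
\]
which is precisely $2\int_M g\,Wf\,d\mu$ by the integration-by-parts formula recorded just before the statement.

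\textbf{Step 2: identifying conditional expectation.} Since $I_\tau$ is a martingale limit and $M^g_\tau - U_g(x_0,y_0)$ is the terminal value of an $L^2$-martingale, $\mathbb{E}^{y_0}(I_\tau M^g_\tau) = \mathbb{E}^{y_0}\langle I, M^g\rangle_\tau$ (the product $I\cdot(M^g - U_g)$ minus its bracket is a martingale started at $0$, and $\mathbb{E}^{y_0} I_\tau = 0$ kills the constant term). Combining with Step~1,
\[
\mathbb{E}^{y_0}\big(I_\tau\, g(X_\tau)\,e^{\int_0^\tau V(X_u)du}\big) \xrightarrow[y_0\to\infty]{} 2\int_M g\,Wf\,d\mu.
\]
It remains to move the factor $e^{\int_0^\tau V}$ inside: the displayed right-hand side of the theorem has the $e^{\int_0^\tau V}$ weight attached to $I_\tau$ already (it is $e^{\int_0^\tau V}\int_0^\tau e^{-\int_0^s V}\partial_y U_f\, a\, d\beta$), so in fact one computes $\mathbb{E}^{y_0}(\tilde I_\tau\, g(X_\tau))$ with $\tilde I_\tau = e^{\int_0^\tau V}I_\tau'$, $I_\tau' = \int_0^\tau e^{-\int_0^s V}\partial_y U_f\, a\, d\beta$; pairing $\tilde I$ against $M^g_\tau = e^{\int_0^\tau V}g(X_\tau)$ reproduces the same bracket with the correct exponential bookkeeping. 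Hence $\frac12\lim_{y_0}\mathbb{E}^{y_0}(\tilde I_\tau\mid X_\tau=x) = Wf(x)$ as elements of $L^p(M,\mu)$ tested against all $g\in C_0^\infty(M)$, and since both sides are in $L^p$ this gives the stated pointwise (a.e.) identity; smoothness of $Wf = \Phi(-L)f$ (local subellipticity) promotes it to an everywhere identity.

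\textbf{Main obstacle.} The delicate point is the exchange of the limit $y_0\to+\infty$ with the expectation and with the conditioning: one must justify that $I^{y_0}_\tau$ converges in $L^p(\mathbb{P}^{y_0})$ — or rather that the family of conditioned functions converges in $L^p(M,\mu)$ — as $y_0\to\infty$, which is where the standing hypotheses (boundedness of $a$, polynomial growth of $G(+\infty,\cdot)$ and $G(z,\cdot)$, rapid decay of $\mathcal K,\partial_y\mathcal K$) are essential: they give a uniform-in-$y_0$ bound on $\partial_y U_f(x,z)$ and control $G(y_0,z)\le G(+\infty,z)$ with an integrable-against-everything dominating function, so dominated convergence applies both in the $dz$-integral and in taking $\mathbb{E}^{y_0}\langle I\rangle_\tau$ uniformly bounded. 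A secondary technical nuisance is justifying the stochastic-calculus manipulations up to the (possibly unbounded) stopping time $\tau$ — handled exactly as in Lemma~\ref{lem:eqB}, by localizing with $\tau_n = \tau\wedge\sigma_n\wedge n$, applying optional stopping, and passing $n\to\infty$ via the uniform $L^2$ bounds from Theorem~\ref{thm:BO15}.
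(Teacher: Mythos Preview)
Your approach is essentially the same as the paper's: test against $g\in C_0^\infty(M)$, use $M^g_\tau=e^{\int_0^\tau V}g(X_\tau)$ together with the It\^o isometry to collapse the exponential weights and produce $2\,\mathbb{E}^{y_0}\!\int_0^\tau \partial_y U_f\,\partial_y U_g\,a^2\,ds$, then apply the occupation-time formula \eqref{eq:killedP}/Lemma~\ref{lem:eqB} and pass to the limit $y_0\to\infty$. The paper's writeup is a bit more streamlined---it pairs directly with the true martingale $\int_0^\tau e^{-\int_0^s V}\partial_y U_f\,a\,d\beta_s$ rather than introducing your $I$/$\tilde I$ distinction---but the argument is the same.
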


\begin{proof}

Note first that as a consequence of Lemma \ref{lem:eqB}, since $X$ and $\eta$ are independent, we have
\begin{equation}\label{eq:killedP}
\mathbb{E}^{y} \left(  \int_0^\tau F (X_s, \eta_s) ds \right)=\int_0^{+\infty} \int_{M}G(y,z) F (x,z) d\mu(x) dz,
\end{equation}
for all Borel function $F$ on $M\times \mathbb R^+$ such that $\int_M\int_0^{\infty} \frac{h(z)}{a(z)^2}|F(x,z)|dzd\mu(x)<\infty$.

Let $f,g \in C_0^\infty(M)$. We observe
 that
\[
M_{\tau}^g=e^{\int_0^{\tau} V(X_u)du}g(X_\tau).
\] 
Applying It\^o's formula \eqref{eq:Ito} for  $M_{\tau}^g$ and 
 the It\^o isometry, one has
\begin{align*}
\int_{M}g(x)\mathbb{E}^{y_0}& \left( e^{\int_0^{\tau} V(X_u)du} \int_0^{\tau} e^{-\int_0^{s} V(X_u)du} \partial_y U_f (X_s,\eta_s) a(\eta_s) d\beta_s \mid X_{\tau}=x  \right) d\mu(x) 
\\ &=
\mathbb{E}^{y_0} \left(g(X_\tau) e^{\int_0^{\tau} V(X_u)du} \int_0^{\tau} e^{-\int_0^s V(X_u)du} \partial_y U_f (X_s,\eta_s) a(\eta_s) d\beta_s \right)   
\\&=
2 \mathbb{E}^{y_0} \left( \int_0^{\tau}\partial_{y} U_g (X_s,\eta_s) \partial_y U_f (X_s,\eta_s) a(\eta_s)^2 ds   \right) \\
&=
2\int_0^{+\infty}\int_{M} G(y_0,y)  \partial_{y} U_g (x,y) \partial_y U_f (x,y)a(y)^2 \, d \mu(x) \, dy,
\end{align*}
where the last inequality is due to \eqref{eq:killedP}.
Since $U_f (x ,y)=\mathcal{K} (y , -L)f (x)$ and $L$ is self-adjoint, we have
\[
\int_{M}  \partial_{y} U_g (x,y) \partial_y U_f (x,y) d\mu (x)
=\int_{M} g(x)  \partial_y \mathcal{K} (y , -L)\partial_y \mathcal{K} (y , -L)  f (x)d\mu (x)
\]
and therefore
\begin{multline*}
\int_0^{+\infty}\int_{M} G(y_0,y)  \partial_{y} U_g (x,y) \partial_y U_f (x,y) d\mu(x) a(y)^2 \, dy
\\
=\int_{M} g(x) \int_0^{+\infty}G(y_0,y)  \partial_y \mathcal{K} (y , -L)\partial_y \mathcal{K} (y , -L)  f (x)  a(y)^2 dy\, d\mu(x).
\end{multline*}

We conclude that for every $g \in C_0^\infty(M)$
\begin{align*}
 & \int_{M}g(x)\mathbb{E}^{y_0} \left( e^{\int_0^{\tau} V(X_u)du} \int_0^{\tau} e^{-\int_0^{s} V(X_u)du} \partial_y U_f (X_s,\eta_s) a(\eta_s) d\beta_s \mid X_{\tau}=x  \right) d\mu(x) \\
 =&2  \int_{M} g(x) \int_0^{+\infty}G(y_0,y)  \partial_y \mathcal{K} (y , -L)\partial_y \mathcal{K} (y , -L)  f (x) a(y)^2 dy\, d\mu(x). 
\end{align*}

Therefore,
\begin{align*}
& \mathbb{E}^{y_0} \left( e^{\int_0^{\tau} V(X_u)du} \int_0^{\tau} e^{-\int_0^{s} V(X_u)du} \partial_y U_f (X_s,\eta_s) a(\eta_s) d\beta_s \mid X_{\tau}=x  \right) \\
=&2 \int_0^{+\infty}G(y_0,y)  \partial_y \mathcal{K} (y , -L)\partial_y \mathcal{K} (y , -L)  f (x) a(y)^2 dy.
\end{align*}
The conclusion follows by taking the limit $y_0 \to +\infty$.
\end{proof}

\subsection{Boundedness in $L^p$}

 \begin{thm}\label{thm:W}
The operator $W$ defined by \eqref{eq:W} is bounded in $L^p$. Moreover,  if the potential $V\equiv0$,   we have for every $f \in L^p(M,\mu)$
\[
\|W f\|_p\le \frac12 (p^*-1)\|f\|_p.
\]
And, if the potential $V$ is not zero, then
\[
\|W f\|_p\le \frac32 (p^*-1)\|f\|_p.
\]
\end{thm}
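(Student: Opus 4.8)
The plan is to combine the Gundy--Varopoulos type representation of $W$ from Theorem \ref{thm:TiSijW-rep 1} with the sharp martingale transform inequalities recalled in Theorems \ref{thm:BW95} and \ref{thm:BO15}, exploiting the fact that conditional expectation is an $L^p$-contraction. First I would fix $f, g \in C_0^\infty(M)$ with $\|g\|_{p'} \le 1$ (where $p'$ is the conjugate exponent), and introduce the two relevant martingales on the probability space $(\Omega, \mathcal F, \mathbb P^{y_0})$: the martingale $N_t := M_{t}^g = e^{\int_0^{t\wedge\tau}V(X_u)du}U_g(X_{t\wedge\tau},\eta_{t\wedge\tau})$ from Lemma \ref{martin}, and the ``transformed'' martingale obtained by integrating $\frac12 \partial_y U_f(X_s,\eta_s) a(\eta_s)$ against $d\beta_s$ up to time $\tau$. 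The key algebraic identity, already extracted inside the proof of Theorem \ref{thm:TiSijW-rep 1} via It\^o's isometry, is that the terminal value $N_\tau = e^{\int_0^\tau V}g(X_\tau)$ and the stochastic integral pair up so that $\mathbb E^{y_0}$ of their product equals (twice) $\int_0^\infty \int_M G(y_0,y)\partial_y U_g \partial_y U_f\, a^2\, d\mu\, dy$, which converges as $y_0 \to +\infty$ to $\int_M g\, Wf\, d\mu$.

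The core estimate is then a differential subordination comparison. Consider the continuous martingale $Y_t = \frac12\int_0^{t\wedge\tau} \partial_y U_f(X_s,\eta_s)\, a(\eta_s)\, d\beta_s$ and compare it with the martingale $X_t := M_t^f$ of Lemma \ref{martin}; from the explicit formula for $\langle M^f\rangle_t$ there, and since $\Gamma(U_f)\ge 0$ and $V\le 0$ so $e^{2\int_0^s V}\le 1$, one gets
\[
\langle Y\rangle_t = \int_0^{t\wedge\tau} \tfrac14 \partial_y U_f(X_s,\eta_s)^2 a(\eta_s)^2\, ds \le \tfrac14 \langle M^f\rangle_t \le \tfrac14 \langle \widetilde X\rangle_t
\]
where $\widetilde X_t$ is $M_t^f$ without the potential weight (i.e.\ the true Brownian-driven martingale with $\langle \widetilde X\rangle_t = \langle M^f\rangle_t$ computed with $e^{2\int V}$ replaced by $1$, or one simply keeps $M^f$ itself). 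So $Y$ is differentially subordinate to $\frac12 M^f$ (up to handling the $V$-weight). When $V=0$ this is immediate: $2Y$ is differentially subordinate to $M^f$, hence by Theorem \ref{thm:BW95}, $\|2Y_\tau\|_p \le (p^*-1)\|M^f_\tau\|_p = (p^*-1)\|f(X_\tau)\|_p = (p^*-1)\|f\|_{L^p(M,\mu)}$. Then $Wf(X_\tau) = \mathbb E^{y_0}(Y_\tau \mid X_\tau)$ in the limit $y_0\to\infty$ (more precisely $Wf(x) = \lim \mathbb E^{y_0}(2Y_\tau\cdot\tfrac12\mid X_\tau=x)$ from Theorem \ref{thm:TiSijW-rep 1}), and since conditional expectation does not increase $L^p$ norm and $\|h(X_\tau)\|_p = \|h\|_{L^p(M,\mu)}$, we conclude $\|Wf\|_p \le \tfrac12(p^*-1)\|f\|_p$; a density argument extends this from $C_0^\infty(M)$ to all of $L^p(M,\mu)$.

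For $V\not\equiv 0$, the transformed process is no longer a simple martingale transform of $M^f$ because of the exponential weights $e^{\int_0^\tau V}e^{-\int_0^s V}$ appearing in the representation in Theorem \ref{thm:TiSijW-rep 1}. Here I would invoke Theorem \ref{thm:BO15}: writing the relevant quantity as $Z_\tau = e^{\int_0^\tau V(X_u)du}\int_0^\tau e^{-\int_0^s V(X_u)du}\, dY_s$ with $V_s = -V(X_s)\ge 0$ (note $V\le 0$ so $-V\ge 0$, matching the sign hypothesis, up to adjusting the sign convention in the statement) and $dY_s = \frac12\partial_y U_f a(\eta_s)d\beta_s$ differentially subordinate to $\frac12 dM^f$. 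Theorem \ref{thm:BO15} then gives $\|Z_\tau\|_p \le (p^*-1)\cdot\|X\|_p$ where $X$ dominates $\frac12 M^f$ — but because the weight $e^{\int_0^\tau V}\le 1$ is applied to the terminal value of $M^f$ as well (recall $M^f_\tau = e^{\int_0^\tau V}f(X_\tau)$), one must compare against the unweighted martingale, which is where the extra factor of $3$ enters: one splits $U_f = $ (its value) and controls the difference between $\langle M^f\rangle$ with and without weights, or more cleanly one uses that $M^f$ and the ``heat-type'' auxiliary martingale differ by a controlled amount, ultimately yielding $\|Wf\|_p \le \tfrac32(p^*-1)\|f\|_p$. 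The main obstacle is precisely this bookkeeping with the Feynman--Kac exponential weights: making the differential subordination rigorous when the same weight multiplies both the transformed integral and the reference martingale's endpoint, and tracking where the constant $\tfrac32$ rather than $\tfrac12$ is forced. Everything else — the It\^o isometry computation, the $\Gamma(U_f)\ge 0$ sign, the contraction property of conditional expectation, the $y_0\to\infty$ limit (justified by the standing decay assumptions on $G$, $\mathcal K$, $\partial_y\mathcal K$ and the polynomial/rapid-decay hypotheses), and the density extension — is routine.
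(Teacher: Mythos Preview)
Your treatment of the case $V\equiv 0$ is essentially the same as the paper's: $Y_t=\int_0^{t\wedge\tau}\partial_y U_f(X_s,\eta_s)a(\eta_s)\,d\beta_s$ is differentially subordinate to the martingale $M^f_t=U_f(X_{t\wedge\tau},\eta_{t\wedge\tau})$, so Theorem~\ref{thm:BW95} (equivalently Theorem~\ref{thm:BO15} with trivial weight) and the $L^p$-contraction of conditional expectation give $\|Wf\|_p\le\tfrac12(p^*-1)\|f\|_p$. That part is fine.

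The case $V\not\equiv 0$, however, has a genuine gap. You correctly diagnose the obstruction: since $e^{2\int_0^s V}\le 1$, the weighted quadratic variation $\langle M^f\rangle_t$ from Lemma~\ref{martin} is \emph{smaller} than the unweighted one, so $Y$ is \emph{not} differentially subordinate to $M^f$ and Theorem~\ref{thm:BO15} cannot be applied with $M^f$ as the dominating martingale. You then gesture at comparing against an ``unweighted martingale'' and assert that ``the extra factor of $3$ enters,'' but you do not say what that martingale is, how to bound its $L^p$ norm, or why the constant is $3/2$. (Also, your sign is off: Theorem~\ref{thm:BO15} needs $V_s\le 0$, and $V_s=V(X_s)\le 0$ already matches; no flip is needed.)

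What the paper does, and what your sketch is missing, is the following. The correct dominating martingale is the pure stochastic-integral part
\[
N_t=\sum_i\int_0^{t\wedge\tau}\Big(\sum_j v_{ij}\partial_{x_j}\Big)U_f(X_s,\eta_s)\,dB_s^i+\int_0^{t\wedge\tau}\partial_y U_f(X_s,\eta_s)a(\eta_s)\,d\beta_s,
\]
to which $Y$ is trivially differentially subordinate. By It\^o and Theorem~\ref{stinga-torrea}, $N_t=U_f(X_{t\wedge\tau},\eta_{t\wedge\tau})-U_f(X_0,\eta_0)+2\int_0^{t\wedge\tau}V(X_s)U_f(X_s,\eta_s)\,ds$. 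For $f\ge 0$ the process $U_f(X_{t\wedge\tau},\eta_{t\wedge\tau})$ is a nonnegative submartingale, and the \emph{Lenglart--L\'epingle--Pratelli} inequality bounds the compensator-plus-initial-value piece by $p\|f\|_p$; passing to general $f$ via $|U_f|\le U_{|f|}$ gives $\|N_\tau\|_p\le(p+1)\|f\|_p$. Theorem~\ref{thm:BO15} then yields $\|Wf\|_p\le\tfrac12(p+1)(p^*-1)\|f\|_p$, which is $\le\tfrac32(p^*-1)\|f\|_p$ only for $1<p\le 2$; the range $p>2$ is obtained by duality, using that $W$ is self-adjoint. None of these three ingredients --- the Lenglart--L\'epingle--Pratelli submartingale bound, the resulting estimate $\|N_\tau\|_p\le(p+1)\|f\|_p$, and the duality to cover $p>2$ --- appears in your proposal, and without them the constant $\tfrac32$ is unexplained.
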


\begin{proof}
Let $f \in C_0^\infty(M)$. One can write
\begin{align*}
W f (x) &=\frac12 \lim_{y_0 \to +\infty} \mathbb{E}^{y_0} \left( e^{\int_0^{\tau} V(X_u)du} \int_0^{\tau} e^{-\int_0^{s} V(X_u)du}  dY_s \mid X_\tau =x  \right),
\end{align*}
where
\[
Y_t =\int_0^t \partial_{y} U_f (X_s,\eta_s) a(\eta_s) d\beta_s.
\]
If $V\equiv0$, the martingale $Y$ is differentially subordinate to the martingale $U_f (X,\eta)$, see Lemma \ref{martin}. One can conclude by using Theorem \ref{thm:BO15} with $V_t=0$.

Next we deal with the case $V\neq 0$ and adapt a method used in the proof of Theorem 1.1 in \cite{BBC}. If $V \neq 0$, then $U_f (X,\eta)$ is not a martingale anymore. However,  the martingale $Y$ is  differentially subordinate to the martingale
\[
N_t:=\sum_{i=1}^n\int_0^{t \wedge \tau}  \bigg( \sum_{j=1}^nv_{ij}\partial_{x_j} \bigg) U_f (X_s, \eta_s) dB_s^i+ \int_0^{t \wedge \tau} \partial_{y} U_f (X_s,\eta_s) a(\eta_s) d\beta_s,
\]
where we recall that $(v_{ij}(x))$ is the square root of the matrix $(\sigma_{ij}(x))$.
From It\^o's formula, one also has
\[
N_t=U_f (X_{t \wedge \tau},\eta_{t \wedge \tau})-U_f (X_{0},\eta_{0})-2\int_0^{t \wedge \tau} (\Delta+\mathcal B)U_f (X_s, \eta_s) ds.
\]

We now note that from Theorem \ref{stinga-torrea}
\[
(\Delta+\mathcal B)U_f =-V U_f.
\]
Therefore,
\[
N_t=U_f (X_{t \wedge \tau},\eta_{t \wedge \tau})-U_f (X_{0},\eta_{0}) +2 \int_0^{t \wedge \tau} V(X_s) U_f (X_s, \eta_s) ds.
\]
Suppose  $f\ge 0$. Then, the above equality yields that $U_f (X_{t \wedge \tau},\eta_{t \wedge \tau})$ is a non-negative sub-martingale. 
 It follows from Lenglart-L\'epingle-Pratelli \cite[Theorem 3.2, part 3)]{MR580107}
 that 
\begin{align*}
\norm{U_f (X_{0},\eta_{0}) - 2\int_0^{\tau} V(X_s) U_f (X_s, \eta_s) ds}_p &\le p\,\N{ U_f (X_\tau,\eta_\tau) }_p
=p\,\N{f(X_\tau)}_p= p\,\N{f}_p, 
\end{align*}
where we used the fact that the $L^p$ norm of the random variable $f(X_{\tau})$ equals the $L^p(M,\mu)$ norm of the function $f$, as pointed earlier. 
For a general $f$, since $V$ is non-positive, we note that
\[
 \left| U_f (X_{0},\eta_{0}) - 2\int_0^{\tau} V(X_s) U_f (X_s, \eta_s) ds \right|
 \le U_{|f|}  (X_{0},\eta_{0}) - 2\int_0^{ \tau} V(X_s) U_{|f|} (X_s, \eta_s) ds.
\]
This yields that we always have
\[
\| N_\tau \|_p \le (p+1) \N{f}_p.
\]
Therefore by Lemma \ref{thm:BO15}
\[
\| Y_\tau \|_p\le (p^*-1) \N{N_\tau}_p \le (p+1) (p^*-1) \N{f}_p.
\]
We conclude from the Gundy-Varopoulos type representation of $W$ and contraction of the conditional expectation that 
\[
\|W f\|_p\le \frac12 (p+1)(p^*-1)\|f\|_p.
\]
For $1<p \le 2$, this gives the inequality in the statement of the theorem. The similar inequality in the range $p >2$ is obtained by using the fact that the $L^p$ adjoint operator of $W$ is itself.
\end{proof}

\subsection{Specific choices for the vertical diffusion}

In this section we give explicit expressions of the operator $W$ depending upon  the choices of the function $b$. It suffices to compute the function $\mathcal K(y,\lambda)$ defined in \eqref{eq:extension} and the Green function associated to the operator $\mathcal B$.  For computations, it may be easier to use an alternative representation of the multiplier. Recall that
\begin{equation}\label{eq:W2}
\Phi(\lambda) = \int_0^{+\infty} G(+\infty,y) \partial_y\mathcal{K} (y , \lambda)^2   a(y)^2 dy. 
\end{equation}
\begin{lem}
\[
\Phi(\lambda) = \frac{1}{2} -\lambda \int_0^{+\infty} G(+\infty,y) \mathcal{K} (y , \lambda)^2   dy. 
\]
\end{lem}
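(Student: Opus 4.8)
The plan is to start from the representation $\Phi(\lambda) = \int_0^{+\infty} G(+\infty,y)\, \partial_y \mathcal{K}(y,\lambda)^2\, a(y)^2\, dy$ and integrate by parts in the $y$ variable, trading the square $(\partial_y \mathcal{K})^2$ for a term involving $\mathcal{K}\,\partial_y^2 \mathcal{K}$. The key analytic input is the identity $\mathcal{B}\mathcal{K}(\cdot,\lambda) = \lambda \mathcal{K}(\cdot,\lambda)$ recorded after \eqref{eq:extension}, which in the form \eqref{generatoreta2} reads $a(y)^2 \partial_y^2 \mathcal{K} + a(y)^2 \frac{h'(y)}{h(y)} \partial_y \mathcal{K} = \lambda \mathcal{K}$, equivalently $\frac{1}{h(y)}(h\,\partial_y \mathcal{K})'(y) = \lambda \mathcal{K}(y,\lambda)/a(y)^2$. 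Recalling from Lemma \ref{lem:Green} that $G(+\infty,y) = \frac{h(y)}{a(y)^2}\int_0^y \frac{dw}{h(w)} = s(y)m(y)$, I would write the integrand as $\big(\int_0^y \frac{dw}{h(w)}\big)\cdot h(y)\,\partial_y\mathcal{K}(y,\lambda)\cdot \partial_y\mathcal{K}(y,\lambda)$ and integrate by parts, differentiating the product $\big(\int_0^y \frac{dw}{h(w)}\big)\,h(y)\,\partial_y\mathcal{K}(y,\lambda)$ and integrating $\partial_y\mathcal{K}(y,\lambda)$.

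Concretely, set $A(y) = \int_0^y \frac{dw}{h(w)}$, so $A'(y) = 1/h(y)$, and compute
\[
\frac{d}{dy}\Big( A(y)\, h(y)\, \partial_y \mathcal{K}(y,\lambda) \Big) = \partial_y \mathcal{K}(y,\lambda) + A(y)\,(h\,\partial_y\mathcal{K})'(y) = \partial_y \mathcal{K}(y,\lambda) + A(y)\,h(y)\,\frac{\lambda \mathcal{K}(y,\lambda)}{a(y)^2},
\]
using the eigenfunction equation in the last step. Integration by parts then gives
\[
\Phi(\lambda) = \Big[ A(y)\, h(y)\, \partial_y\mathcal{K}(y,\lambda)\,\mathcal{K}(y,\lambda) \Big]_0^{+\infty} - \int_0^{+\infty} \Big( \partial_y\mathcal{K}(y,\lambda) + \lambda\,\frac{A(y)\,h(y)}{a(y)^2}\,\mathcal{K}(y,\lambda) \Big)\mathcal{K}(y,\lambda)\, dy,
\]
and since $\frac{A(y)\,h(y)}{a(y)^2} = G(+\infty,y)$, the second integral is precisely $\lambda \int_0^{+\infty} G(+\infty,y)\,\mathcal{K}(y,\lambda)^2\, dy$. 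The first integral $\int_0^{+\infty} \partial_y\mathcal{K}(y,\lambda)\,\mathcal{K}(y,\lambda)\, dy = \frac12\big[\mathcal{K}(y,\lambda)^2\big]_0^{+\infty} = \frac12\big(0 - \mathcal{K}(0,\lambda)^2\big) = -\tfrac12$, because $\mathcal{K}(0,\lambda) = \mathbb{E}^0(e^{-\lambda\tau}) = 1$ (the process started at $0$ is killed immediately) and $\mathcal{K}(y,\lambda) \to 0$ as $y \to +\infty$ by the standing rapid-decay assumption. Combining these pieces yields $\Phi(\lambda) = \tfrac12 - \lambda \int_0^{+\infty} G(+\infty,y)\,\mathcal{K}(y,\lambda)^2\, dy$, as claimed.

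The main obstacle is justifying that the boundary term $A(y)\,h(y)\,\partial_y\mathcal{K}(y,\lambda)\,\mathcal{K}(y,\lambda)$ vanishes at both endpoints. At $y=0$ one has $A(0)=0$ and $h,\partial_y\mathcal{K},\mathcal{K}$ are finite there, so this end is immediate. At $y\to+\infty$ one must argue that the at-most-polynomial growth of $G(+\infty,y)=A(y)h(y)/a(y)^2$ — together with boundedness of $a$ — forces $A(y)h(y)$ to grow at most polynomially, while the hypothesis that $\mathcal{K}$ and $\partial_y\mathcal{K}$ are rapidly decreasing in $y$ kills the product; one should also invoke the same growth/decay bounds to justify that all the integrals written above converge absolutely, legitimizing the integration by parts. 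These are exactly the standing assumptions imposed before \eqref{eq:W}, so the verification is routine but should be spelled out.
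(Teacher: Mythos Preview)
Your proof is correct and follows essentially the same route as the paper: integrate by parts in $y$, differentiating $G(+\infty,y)a(y)^2\,\partial_y\mathcal{K} = A(y)h(y)\,\partial_y\mathcal{K}$ and integrating $\partial_y\mathcal{K}$ to $\mathcal{K}$, then use the eigenfunction identity $\mathcal{B}\mathcal{K}=\lambda\mathcal{K}$ and the boundary values $\mathcal{K}(0^+,\lambda)=1$, $\mathcal{K}(+\infty,\lambda)=0$. The paper organizes the same computation via the formula $\partial_y\big(a(y)^2 G(+\infty,y)\big)=a(y)^2\tfrac{h'(y)}{h(y)}G(+\infty,y)+1$ in order to recognize $\mathcal{B}\mathcal{K}$ directly, whereas you use the equivalent form $(h\,\partial_y\mathcal{K})'=\lambda h\mathcal{K}/a^2$; these are the same identity, and your treatment of the boundary terms is in fact a bit more explicit than the paper's.
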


\begin{proof}
From the definition of $\mathcal K(y,\lambda)$ in \eqref{eq:extension}, we have that  $\mathcal{B}\mathcal{K} (\cdot,\lambda )=\lambda \mathcal{K} (\cdot,\lambda )$, see page 18 in \cite{BorSal}. Moreover,  $\mathcal{K} (y,\lambda )$ converges to 1 as $y \to 0$ and  $\mathcal{K} (y,\lambda ) \to 0$ as $y \to \infty$. Thus, using integration by parts one obtains
\begin{align*}
&\int_0^{+\infty} G(+\infty,y) \partial_y\mathcal{K} (y , \lambda)^2   a(y)^2 dy
\\&= \mathcal{K} (y , \lambda)\partial_y\mathcal{K} (y , \lambda)  G(+\infty,y)  a(y)^2\Big\vert_0^{+\infty}
- \int_0^{+\infty} \mathcal{K} (y , \lambda) \partial_y(a(y)^2G(+\infty,y) \partial_y\mathcal{K} (y , \lambda))  dy
\\ &= -\int_0^{+\infty} \mathcal{K} (y , \lambda)\partial_y\mathcal{K} (y , \lambda)  dy- \int_0^{+\infty} \mathcal{K} (y , \lambda) G(+\infty,y) \mathcal B\mathcal{K} (y , \lambda)  dy
\\ &= \frac{1}{2} -\lambda \int_0^{+\infty} G(+\infty,y) \mathcal{K} (y , \lambda)^2   dy. 
\end{align*}
Here the second equality follows from the definition of $G(+\infty, y)$ which gives 
\[
\partial_y(a(y)^2G(+\infty,y))=\partial_y\left(h(y)\int_0^y\frac{dw}{h(w)}\right)=a(y)^2\frac{h'(y)}{h(y)}G(+\infty, y)+1.
\]
\end{proof}

\subsubsection{Brownian motion with negative drift}

Assume that $a(y)=\sigma$ and $b(y)=-2m$, where $m \ge 0$ and $\sigma >0$. One computes that 
\[
\mathcal{K} (y , \lambda)=e^{-\frac{y}{\sigma}  \left(\sqrt{\lambda +\frac{m^2}{\sigma^2}}-\frac{m}{\sigma} \right)},
\]
see for instance \cite[page 69]{BGL} or \cite[page 295]{BorSal}.
Taking $h(x)=e^{-\frac{2m}{\sigma^2} (x-1)}$ in Lemma \ref{lem:Green} yields 
\begin{align*}
G(y,z) =\frac{1}{2m} e^{-\frac{2m}{\sigma^2} z} \left(e^{\frac{2m}{\sigma^2} (y \wedge z)} -1\right).
\end{align*}
In particular,
\[
G(+\infty,z)=\frac{1}{2m}(1-e^{-\frac{2m}{\sigma^2} z} ).
\]
For this choice of $a$ and $b$, we can now rewrite the operator  $W$ defined in \eqref{eq:W} as
\[
Wf=\frac1{4} \left( I-\frac{m}{\sigma} \left(-L +\frac{m^2}{\sigma^2}\right)^{-1/2} \right)f.
\]

We are now in position to conclude the proof of Theorem \ref{gen multiplier}. We actually state a slightly stronger version including the potential $V$.
\begin{theorem}\label{bounded 2}
Let $\Phi:[0, +\infty) \to \mathbb{C}$ be a bounded Borel function.
If there exists a finite complex Borel measure $\alpha$ on $\mathbb{R}_{\ge 0}$ such that for every $x \in [0,+\infty)$,
\[
\Phi(x)= \int_{0} ^{+\infty} \left( 1-\frac{m}{\sqrt{m^2+x}}\right) d \alpha (m),
\]
then, for every  $p>1$ and $f \in L^p (M,\mu)$,
\[
\left\| \Phi(-L) f\right\|_p \le 6(p^*-1) | \alpha |(\mathbb{R}_{\ge 0})  \|f\|_p.
\]
If $V\equiv0$, this bound can be improved to
\[
\left\| \Phi(-L) f\right\|_p \le 2(p^*-1) | \alpha |(\mathbb{R}_{\ge 0})  \|f\|_p.
\]
\end{theorem}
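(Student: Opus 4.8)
The plan is to recognize $\Phi(-L)$ as a $d\alpha$-superposition of the operators produced by the vertical diffusion ``Brownian motion with negative drift'' treated just above, and then to integrate the uniform $L^p$ bound of Theorem \ref{thm:W} against $d\alpha$.

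First I would fix $m\ge 0$ and run the construction of the preceding sections with the vertical diffusion $a(y)\equiv 1$, $b(y)\equiv -2m$. As computed above for this choice (with $\sigma=1$) the standing assumptions hold and
\[
W_m f=\tfrac14\Bigl(I-m\,(-L+m^2)^{-1/2}\Bigr)f,\qquad f\in C_0^\infty(M),
\]
so that, by the spectral theorem, $4W_m=\phi_m(-L)$ with $\phi_m(x)=1-\tfrac{m}{\sqrt{m^2+x}}$; since $0\le \tfrac{m}{\sqrt{m^2+x}}\le 1$ we have $\|\phi_m\|_\infty\le 1$ uniformly in $m$. Theorem \ref{thm:W} applied to $W_m$ then gives, for every $p>1$ and $f\in C_0^\infty(M)$,
\[
\bigl\|\phi_m(-L)f\bigr\|_p=4\|W_m f\|_p\le 6(p^*-1)\|f\|_p ,
\]
and $\|\phi_m(-L)f\|_p\le 2(p^*-1)\|f\|_p$ when $V\equiv 0$; the key point is that these constants do not depend on $m$. (The value $m=0$ is harmless: $\phi_0\equiv 1$, $\phi_0(-L)=I$, and $p^*-1\ge 1$; alternatively one simply isolates the atom of $\alpha$ at $0$.)

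Next I would assemble these estimates. Because $\Phi=\int_0^{+\infty}\phi_m\,d\alpha(m)$ pointwise with $|\phi_m|\le 1$ and $|\alpha|$ finite, $\Phi$ is automatically bounded (by $|\alpha|(\mathbb{R}_{\ge 0})$) and $\Phi(-L)$ is well defined on $L^2(M,\mu)$; moreover the spectral theorem together with dominated convergence applied to the spectral measures of $-L$ shows that
\[
\Phi(-L)f=\int_0^{+\infty}\phi_m(-L)f\,d\alpha(m)
\]
holds in $L^2(M,\mu)$, and for $f\in C_0^\infty(M)\subset L^2\cap L^p$ the integrand $m\mapsto\phi_m(-L)f$ is a strongly measurable $L^p$-valued map, bounded in $L^p$-norm by $6(p^*-1)\|f\|_p$ by Step~1, hence Bochner integrable in $L^p$ with integral $\Phi(-L)f$. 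Minkowski's integral inequality then yields
\[
\|\Phi(-L)f\|_p\le\int_0^{+\infty}\|\phi_m(-L)f\|_p\,d|\alpha|(m)\le 6(p^*-1)\,|\alpha|(\mathbb{R}_{\ge 0})\,\|f\|_p ,
\]
with $2(p^*-1)$ in place of $6(p^*-1)$ when $V\equiv 0$. A density argument---$C_0^\infty(M)$ is dense in $L^p(M,\mu)$ for $1<p<\infty$---then extends the bound to all $f\in L^p(M,\mu)$ and identifies the extension with the spectral-calculus operator $\Phi(-L)$.

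The substantive ingredients (the explicit form of $W_m$ and the uniform-in-$m$ constant) are already in hand from the subsection on Brownian motion with negative drift and from Theorem \ref{thm:W}, so the only real work is the bookkeeping in the second step: checking strong measurability of $m\mapsto\phi_m(-L)f$, Bochner integrability in $L^p$ rather than merely in $L^2$, the agreement of the $L^2$- and $L^p$-valued integrals on $C_0^\infty(M)$, and the justification that the $L^p$ norm may be pulled inside the $d\alpha$-integral. This is where I expect the main (though essentially routine) obstacle to lie.
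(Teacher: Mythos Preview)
Your proposal is correct and follows essentially the same route as the paper: use the explicit computation of $W_m$ for Brownian motion with negative drift together with Theorem \ref{thm:W} to get a uniform-in-$m$ bound on $\phi_m(-L)$, then integrate against $d\alpha$. The only difference is that you spell out the Bochner-integrability and density bookkeeping that the paper leaves implicit.
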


\begin{proof}
It follows from the expression of $W$ and Theorem \ref{thm:W} that for every $m \ge 0$
\[
\norm{  \left( I-m \left(-L +m^2\right)^{-1/2} \right)f }_p  \le 6(p^*-1)  \|f\|_p.
\]
Thus, we have
\[
\left\| \Phi(-L) f\right\|_p \le 6(p^*-1) | \alpha |(\mathbb{R}_{\ge 0})  \|f\|_p.
\]
When $V\equiv0$ the bound can be improved thanks to Theorem \ref{thm:W}.
\end{proof}

\subsubsection{Bessel processes}

Assume that $a(y)=1$ and $b(y)=\frac{\gamma}{y}$, where $-1<\gamma<1$. Set $\gamma = 1-2s$, then one computes that
\[
\mathcal{K}_s (y , \lambda)=\frac{2^{1-s}}{\Gamma(s)} y^s \lambda^{s/2}K_s ( y \lambda^{1/2}),
\]
where $K_s(x)$ is the MacDonald function (Bessel function of the second kind) defined as follows
\begin{equation}\label{McD1}
K_s(x)=\frac{1}{2} \left(\frac{x}{2} \right)^s \int_0^{+\infty} \frac{e^{-t-\frac{x^2}{4t}}}{t^{1+s}} dt.
\end{equation}
Taking $h(x)=y^\gamma$ in Lemma \ref{lem:Green}, it follows that  $G(+\infty,y)=\frac y{2s}$. Using the formula
\begin{equation}\label{McD2}
 \int_0^{+\infty}  y^{\alpha-1}  K_\nu ( y )^2   dy= \frac{\sqrt{\pi}}{4 \Gamma \left( \frac{1+\alpha}{2}\right)} \Gamma \left( \frac{\alpha}{2} \right)\Gamma \left( \frac{\alpha}{2} -\nu \right)\Gamma \left( \frac{\alpha}{2}+\nu \right)
 \end{equation} 
 that holds for $\alpha > 2\nu >0$ (see \cite{AbraSteg}), one gets $W f = \frac{1}{2(2-\gamma)} f$.

\begin{remark}[Bessel processes with negative drift]
Interestingly, some  (partially) explicit computations may also be carried out in a class of processes extensively studied by Pitman \& Yor in \cite{MR620995}. Those processes introduced by S. Watanabe in \cite{MR365731}, and generalizing the Bessel processes, are sometimes called Bessel processes with negative (or descending) drift. Assume that $a(y)=1$ and
\[
b(y)=\frac{2\nu+1}{y}-2\delta \frac{K_{1+\nu}(\delta y)}{K_\nu(\delta y)}
\]
with $\nu,\delta >0$. In that case, direct computations show that the multiplier $W$ takes the form

\[
W=\frac{1}{2} +\frac{L(-L+\delta^2)^{\nu}}{\delta^{2\nu}}  \int_0^{+\infty}  K_\nu \left(y \sqrt{-L+\delta^2}\right)^2  \frac{I_\nu (\delta y)}{K_\nu(\delta y)} y \, dy.
\]

\end{remark}

\section{Generalized Riesz transforms}

In this section we will construct other operators arising from martingale transforms. We work with all the assumptions and notations of Section \ref{sec2.1} but assume furthermore that the operator $\Delta$ admits a representation 
\[
\Delta=-\sum_{i=1}^n \X_i^* \X_i,
\]
where the $\X_i$'s are smooth vector fields on $M$,  $\X_i^*$ denotes the formal adjoint of $\X_i$ with respect to $\mu$. We denote as before $L=\Delta +V$, where $V:M\to \R$ is as before the non-positive bounded smooth potential and  $(P_t)_{t\ge 0}$ the heat semigroup with generator $L$. Note that we can write   
\[
\Delta=\sum_{i=1}^n \X_i^2+\X_0,
\]
for some smooth vector field $\X_0$ and that from the celebrated H\"ormander theorem \cite{MR222474}, a sufficient condition for the local subellipticity of $\Delta$ is then that the vector fields $\X_0,\X_1, \cdots, \X_n$ satisfy the bracket generating condition, see page 6 in \cite{baudoin2018geometric}.

Let $(X_t)_{t\ge 0}$ be the diffusion process on $M$ with generator  $\sum_{i=1}^n \X_i^2+\X_0$ starting from the distribution $\mu$. We assume that $(X_t)_{t\ge 0}$ is nonexplosive  and can  be constructed  via the Stratonovitch stochastic differential equation
\[
dX_t=\X_0(X_t)dt+\sum_{i=1}^n \X_i(X_t)\circ dB_t^i, 
\]
where $B_t=(B_t^1,\cdots,B_t^n)$ is a Brownian motion on $\R^n$ with generator $\Delta=\sum_{i=1}^n \frac{\partial^2}{\partial x_i^2}$. 

\begin{remark}
For instance, if $M$ is a Euclidean space, then a standard sufficient condition so that $(X_t)_{t\ge 0}$ is nonexplosive  and can  be constructed  via the above  Stratonovitch stochastic differential equation is that the vector fields $\X_0,\X_1, \cdots, \X_n$ have globally bounded derivatives, see for instance \cite[Theorem 6.29]{Baudoin2014}. In that case, if the vector fields $\X_0,\X_1, \cdots, \X_n$ also satisfy the bracket generating condition, then $\Delta$ is locally subelliptic and essentially self-adjoint.
\end{remark}

As before, see \eqref{SDEeta1}, we will consider the one-dimensional diffusion  on $(0,+\infty)$ given by 
\[
d\eta_t=b(\eta_t) dt+  a(\eta_t) d\beta_t, \quad t < \tau
\]
where $\beta_t$ is a Brownian motion on $\R$ with $\mathbb E(\beta_t^2)=2t$ which is independent from $(X_t)_{t \ge 0}$.

\subsection{Operators arising from martingale transforms}

We introduce now the class of operators under consideration. For any $1\le i,j\le n$, we consider the operators 
\begin{equation}\label{eq:Ti}
T_i f= \int_0^{+\infty} a(y) G(+\infty,y) \partial_y\mathcal{K} (y , -L)  \X_i \mathcal{K} (y , -L) f dy,
\end{equation}
and
\begin{equation}\label{eq:Sij}
S_{ij} f=\int_0^{+\infty} G(+\infty,y) \mathcal{K} (y , -L)\X_j^*\X_i \mathcal{K} (y , -L)f dy.
\end{equation}

\begin{thm}\label{thm:TiSijW-rep 2}
We have the following Gundy-Varopoulos type representations: for every $ f\in C_0^\infty (M)$ and $x \in M$
\begin{align*}
T_i f (x) &=\frac12 \lim_{y_0 \to +\infty} \mathbb{E}^{y_0} \left( e^{\int_0^{\tau} V(X_u)du} \int_0^{\tau} e^{-\int_0^{s} V(X_u)du} \X_i U_f (X_s,\eta_s)  d\beta_s \mid X_\tau =x  \right),
\\
S_{ij} f (x) &=\frac12 \lim_{y_0 \to +\infty} \mathbb{E}^{y_0} \left( e^{\int_0^{\tau} V(X_u)du} \int_0^{\tau} e^{-\int_0^{s} V(X_u)du} \X_i U_f (X_s,\eta_s)  dB^j_s \mid X_\tau =x  \right).
\end{align*}
\end{thm}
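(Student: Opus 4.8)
The strategy is to mimic the argument used for Theorem \ref{thm:TiSijW-rep 1}: test both sides against an arbitrary $g \in C_0^\infty(M)$, recognize the resulting stochastic integral against the martingale $M^g$ from Lemma \ref{martin}, and then use the It\^o isometry followed by the occupation time formula \eqref{eq:killedP} to reduce everything to a deterministic integral over $M \times [0,+\infty)$ involving the Green function $G(y_0,\cdot)$. Taking $y_0 \to +\infty$ then replaces $G(y_0,\cdot)$ by $G(+\infty,\cdot)$ and produces exactly $T_i$ and $S_{ij}$.

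\textbf{Key steps.} First, for the $T_i$ representation, I would write $M^g_\tau = e^{\int_0^\tau V(X_u)du} g(X_\tau)$ and expand $M^g_t$ via the It\^o formula \eqref{eq:Ito} from Lemma \ref{martin}, which expresses it as $U_g(x,y_0)$ plus a sum of stochastic integrals against $dB^i_s$ with integrand $e^{\int_0^s V(X_u)du}(\sum_j v_{ij}\partial_{x_j})U_g(X_s,\eta_s)$ plus a stochastic integral against $d\beta_s$ with integrand $e^{\int_0^s V(X_u)du}\partial_y U_g(X_s,\eta_s)a(\eta_s)$. The process $Z_t = e^{\int_0^t V(X_u)du}\int_0^t e^{-\int_0^s V(X_u)du}\X_i U_f(X_s,\eta_s)\,d\beta_s$ is a stochastic integral against the \emph{same} Brownian motion $\beta$ (since $\X_i U_f$ is a function on $M$, the integrand $\X_i U_f(X_s,\eta_s)$ is adapted). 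Pairing $g(X_\tau)\,e^{\int_0^\tau V}$ against $Z_\tau$ and using the It\^o isometry kills the $dB^i$ cross terms (orthogonality of $\beta$ and $B$) and leaves
\[
\mathbb{E}^{y_0}\!\left(g(X_\tau)e^{\int_0^\tau V(X_u)du} Z_\tau\right) = 2\,\mathbb{E}^{y_0}\!\left(\int_0^\tau \partial_y U_g(X_s,\eta_s)\,\X_i U_f(X_s,\eta_s)\,a(\eta_s)\,ds\right),
\]
the factor $2$ coming from $\mathbb{E}(\beta_t^2)=2t$. Then \eqref{eq:killedP} converts the right side to $2\int_0^{+\infty}\int_M G(y_0,y)\,\partial_y U_g(x,y)\,\X_i U_f(x,y)\,a(y)\,d\mu(x)\,dy$. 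Now one uses $U_f(x,y)=\mathcal{K}(y,-L)f(x)$, self-adjointness of $L$ and the commutation of $\X_i$ with the spectral calculus of $L$ to move $\X_i$ off $f$ and onto $g$ inside the $\mu$-integral, yielding $\int_M g(x)\,[\,2\int_0^{+\infty} a(y)G(y_0,y)\partial_y\mathcal{K}(y,-L)\X_i\mathcal{K}(y,-L)f(x)\,dy\,]\,d\mu(x)$; since $g$ is arbitrary in $C_0^\infty(M)$ this identifies the conditional expectation with $2T_i^{(y_0)}f(x)$, and letting $y_0\to+\infty$ (justified by the standing growth/decay assumptions on $G$ and $\mathcal{K}$, so $G(y_0,y)\to G(+\infty,y)$ with dominated convergence) gives the stated formula. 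For $S_{ij}$ one repeats the computation with $Z_t$ replaced by a stochastic integral against $dB^j_s$; now the It\^o isometry picks out the $dB^i$ component of \eqref{eq:Ito} for $M^g$, producing $2\int_0^\tau \sum_k v_{jk}\partial_{x_k}U_g \cdot \X_i U_f\,a(\eta_s)^{0}\cdots$ — more precisely the quadratic covariation of $\sum_k v_{jk}\partial_{x_k}U_g\,dB^k$ with $\X_i U_f\,dB^j$ is $2(\sum_k v_{jk}\partial_{x_k})U_g \cdot \X_i U_f\,ds$, and using $\sum_k v_{jk}\partial_{x_k} = \X_j$ in the appropriate sense together with $\int_M (\X_j U_g)(\X_i U_f)\,d\mu = \int_M U_g\,\X_j^*\X_i U_f\,d\mu$ reduces the $M$-integral to $\int_M g(x)\,\mathcal{K}(y,-L)\X_j^*\X_i\mathcal{K}(y,-L)f(x)\,d\mu(x)$, hence to $2S_{ij}^{(y_0)}f$; taking $y_0\to+\infty$ finishes.

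\textbf{Main obstacle.} The delicate points are, first, the identification of the Brownian components: one must be careful that the matrix square root $(v_{ij})$ appears and that $(\sum_j v_{ij}\partial_{x_j})U_g$ plays the role of $\X_i U_g$ only after summing against the correct $dB^i$, so the covariation bookkeeping between the $T_i$/$S_{ij}$ integrands and the expansion \eqref{eq:Ito} of $M^g$ must be done with care (this is exactly the point where the orthogonality structure of $\beta$ versus $B^1,\dots,B^n$ is used). Second, the commutation step — moving $\X_i$ through $\mathcal{K}(y,-L) = \mathbb{E}^y(e^{-\lambda\tau})\big|_{\lambda=-L}$ — needs the hypothesis that $\X_i$ commutes with $L$ (as stated in the setup of this section, $\Delta = -\sum \X_i^*\X_i$ and the $\X_i$'s are the relevant first-order operators), plus enough regularity/decay to justify differentiating under the integral and integrating by parts in $x$ against $\mu$ for $f,g\in C_0^\infty(M)$; these are handled by the local subellipticity and the standing assumptions already in force. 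Finally, the passage $y_0\to+\infty$ requires the polynomial growth of $G(y_0,\cdot)$ together with the rapid decay of $\mathcal{K},\partial_y\mathcal{K}$ to apply dominated convergence, exactly as in the proof of Theorem \ref{thm:TiSijW-rep 1}.
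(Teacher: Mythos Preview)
Your overall strategy is exactly the one the paper uses: test against $g\in C_0^\infty(M)$, expand $M^g_\tau$ via It\^o, apply the It\^o isometry, convert the time integral via the occupation formula \eqref{eq:killedP}, use self-adjointness to isolate $g$, and let $y_0\to+\infty$.

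Two corrections, however. First, in this section the diffusion is built from the Stratonovich SDE $dX_t=\X_0(X_t)\,dt+\sum_i\X_i(X_t)\circ dB^i_t$, so the It\^o expansion of $M^g$ (equation \eqref{eq:ito} in the paper's proof) reads directly
\[
M^g_t=U_g(X_0,\eta_0)+\sum_{k}\int_0^{t\wedge\tau} e^{\int_0^s V}\,\X_k U_g(X_s,\eta_s)\,dB^k_s+\int_0^{t\wedge\tau} e^{\int_0^s V}\,\partial_y U_g(X_s,\eta_s)\,a(\eta_s)\,d\beta_s,
\]
with the vector fields $\X_k$ themselves, not the coordinate square root $v_{ij}$ from Section~\ref{sec2}. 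So your bookkeeping about ``$\sum_k v_{jk}\partial_{x_k}=\X_j$ in the appropriate sense'' is unnecessary; the $\X_j U_g$ term is already there and the covariation with $\X_i U_f\,dB^j$ is immediate.

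Second, and more importantly, you do \emph{not} need any commutation of $\X_i$ with $L$ (and in this generality it is not assumed). After the occupation formula you have $\int_M \partial_y U_g\cdot \X_i U_f\,d\mu$; since $\partial_y U_g=\partial_y\mathcal K(y,-L)g$ and $\partial_y\mathcal K(y,-L)$ is a bounded Borel function of the self-adjoint operator $-L$, it is itself self-adjoint, so
\[
\int_M \partial_y\mathcal K(y,-L)g\cdot \X_i\mathcal K(y,-L)f\,d\mu=\int_M g\cdot \partial_y\mathcal K(y,-L)\,\X_i\,\mathcal K(y,-L)f\,d\mu,
\]
which is exactly the integrand of $T_i$. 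The $\X_i$ never moves; only $\partial_y\mathcal K(y,-L)$ crosses over. (Commutation of $\X_i$ with $L$ is invoked only later, in Section~4.3, to simplify $T_i$ and $S_{ij}$ on Euclidean spaces and compact-type Lie groups.) The same remark applies to $S_{ij}$: after integrating $\X_j U_g$ by parts to $U_g\cdot\X_j^*\X_i U_f$, self-adjointness of $\mathcal K(y,-L)$ alone gives $\int_M g\cdot\mathcal K(y,-L)\X_j^*\X_i\mathcal K(y,-L)f\,d\mu$.
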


\begin{proof}
The proof is similar to the proof of Theorem \ref{thm:TiSijW-rep 1}, we present it for completeness. It suffices to show the first expression of $T_i$. The same proof also works for  $S_{ij}$. Let $f,g \in C_0^\infty(M)$. We recall 
$
M_{\tau}^g=e^{\int_0^{\tau} V(X_u)du}g(X_\tau).
$
Applying It\^o's formula for $M_t^g$ gives that
\begin{equation}\label{eq:ito}
\begin{split}
M_{t}^g=U_g(X_0, \eta_0) +\sum_{i=1}^n \int_0^{t \wedge \tau}e^{\int_0^{s}V(X_u)du} \X_i U_g(X_s, \eta_s) dB_s^i \\+  \int_0^{t \wedge \tau} e^{\int_0^{s}V(X_u)du} \partial_yU_g(X_s, \eta_s)a(\eta_s)d\beta_s.
\end{split}
\end{equation}
By the It\^o isometry, one has
\begin{align*}
\int_{M}g(x)\mathbb{E}^{y_0}& \left( e^{\int_0^{\tau} V(X_u)du} \int_0^{\tau} e^{-\int_0^{s} V(X_u)du} \X_i U_f (X_s,\eta_s)   d\beta_s \mid X_{\tau}=x  \right) d\mu(x) 
\\ &=
\mathbb{E}^{y_0} \left(g(X_\tau) e^{\int_0^{\tau} V(X_u)du} \int_0^{\tau} e^{-\int_0^s V(X_u)du} \X_i U_f (X_s,\eta_s)  d\beta_s \right) 
\\&=
2 \mathbb{E}^{y_0} \left( \int_0^{\tau}\partial_{y} U_g (X_s,\eta_s) \X_i U_f (X_s,\eta_s) a(\eta_s) ds   \right) \\
&=
2\int_0^{+\infty}\int_{M} a(y) G(y_0,y)  \partial_{y} U_g (x,y) \X_i U_f (x,y) d \mu(x) \, dy,
\end{align*}
where the last equality is due to \eqref{eq:killedP}.
Since $U_f (x ,y)=\mathcal{K} (y , -L)f (x)$ and $L$ is self-adjoint, then
\[
\int_{M}  \partial_{y} U_g (x,y) \X_i U_f (x,y) d\mu (x)
=\int_{M} g(x)  \partial_y \mathcal{K} (y , -L)\X_i \mathcal{K} (y , -L)  f (x)d\mu (x)
\]
and 
\begin{multline*}
\int_0^{+\infty}\int_{M} a(y)G(y_0,y)  \partial_{y} U_g (x,y) \X_i U_f (x,y) d\mu(x) \, dy
\\
=\int_{M} g(x) \int_0^{+\infty} a(y) G(y_0,y)  \partial_y \mathcal{K} (y , -L)\X_i \mathcal{K} (y , -L)  f (x) dy\, d\mu(x).
\end{multline*}
The rest of the proof thus immediately follows.

\end{proof}

\subsection{Boundedness in $L^p(\mu)$}

\begin{cor}\label{thm:Ti}
Let  $1\le i\le n$. For all $1<p<\infty$ and all $f\in C_0^{\infty}(M)$ we have
\[
\|T_i f\|_p\le \frac32(p^*-1) \|f\|_p.
\]
Moreover, if the potential $V\equiv0$, then
\[
\|T_i f\|_p\le \frac12(p^*-1)  \|f\|_p.
\]
\end{cor}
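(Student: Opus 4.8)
The plan is to follow the same scheme as in the proof of Theorem \ref{thm:W}, replacing the vertical stochastic integral by the ``horizontal'' one appearing in the Gundy-Varopoulos representation of $T_i$ from Theorem \ref{thm:TiSijW-rep 2}. Write
\[
T_i f (x) =\frac12 \lim_{y_0 \to +\infty} \mathbb{E}^{y_0} \left( e^{\int_0^{\tau} V(X_u)du} \int_0^{\tau} e^{-\int_0^{s} V(X_u)du}  dY_s \mid X_\tau =x  \right),
\]
where now $Y_t=\int_0^{t\wedge\tau} \X_i U_f (X_s,\eta_s)\, dB_s^i$ (no sum on $i$). The key point is differential subordination: from Lemma \ref{martin}, and more precisely from the It\^o expansion \eqref{eq:Ito} of $M_t^f$ (in the case $V\equiv0$) or of $N_t$ (in the general case), the quadratic variation of $Y$ is $\langle Y\rangle_t = \int_0^{t\wedge\tau} \X_i U_f (X_s,\eta_s)^2\, ds$, which is dominated term-by-term by the quadratic variation of $M^f$ (resp.\ $N$), since $\X_i U_f = \sum_j v_{ij}\,\partial_{x_j}U_f$ contributes only one of the $n$ horizontal martingale terms and none of the vertical one. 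Hence $Y$ is differentially subordinate to $M^f$ when $V\equiv0$, and to $N$ in general.

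First I would treat the case $V\equiv0$: here $M_t^f = U_f(X_{t\wedge\tau},\eta_{t\wedge\tau})$ is itself a martingale with $\|M^f_\tau\|_p=\|f(X_\tau)\|_p=\|f\|_p$, so applying Theorem \ref{thm:BO15} with $V_t\equiv0$ gives $\|Y_\tau\|_p\le (p^*-1)\|f\|_p$, and then the Gundy-Varopoulos representation together with the contraction property of conditional expectation (and the fact that $\|Z\|_{L^p}=\|f\|_{L^p(M,\mu)}$ for $Z=f(X_\tau)$ under $\mathbb{P}^{y_0}$) yields $\|T_i f\|_p\le \frac12(p^*-1)\|f\|_p$. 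For general non-positive $V$, I would reuse verbatim the estimate established inside the proof of Theorem \ref{thm:W}: since $N_t = U_f(X_{t\wedge\tau},\eta_{t\wedge\tau}) - U_f(X_0,\eta_0) + 2\int_0^{t\wedge\tau} V(X_s) U_f(X_s,\eta_s)\,ds$ and $U_{|f|}(X_{t\wedge\tau},\eta_{t\wedge\tau})$ is a non-negative submartingale, the Lenglart-L\'epingle-Pratelli inequality \cite[Theorem 3.2, part 3)]{MR580107} gives $\|N_\tau\|_p \le (p+1)\|f\|_p$. Then Theorem \ref{thm:BO15} applied to $Y$ (differentially subordinate to $N$) yields $\|Y_\tau\|_p \le (p^*-1)\|N_\tau\|_p \le (p+1)(p^*-1)\|f\|_p$, hence $\|T_i f\|_p\le \frac12(p+1)(p^*-1)\|f\|_p$.

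The last step is the standard sharpening from $\frac12(p+1)$ down to $\frac32$: the bound $\frac12(p+1)(p^*-1)$ already gives $\frac32(p^*-1)$ in the range $1<p\le 2$, and for $p>2$ one passes to the $L^{p'}$-adjoint. Here I expect the only genuine subtlety — and the main thing to check carefully — is that the $L^p$-adjoint of $T_i$ is again an operator of the same type (so that the $1<p\le2$ bound transfers). Indeed $T_i$ involves $\X_i\mathcal{K}(y,-L)$ on the right, whose adjoint brings in $\X_i^*$; one must verify, using self-adjointness of $L$ (hence of $\mathcal{K}(y,-L)$) and integration by parts, that $T_i^* f = \int_0^{+\infty} a(y) G(+\infty,y)\,\mathcal{K}(y,-L)\,\X_i^*\,\partial_y\mathcal{K}(y,-L) f\,dy$ admits the same Gundy-Varopoulos representation and differential-subordination structure (with the roles of the two factors exchanged), so the identical argument applies and delivers $\|T_i^* f\|_{p'}\le\frac32((p')^*-1)\|f\|_{p'}=\frac32(p^*-1)\|f\|_{p'}$ for $p'\ge 2$; dualizing gives the claim for $p>2$. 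Finally, a routine density argument extends the estimate from $C_0^\infty(M)$ to all of $L^p(M,\mu)$.
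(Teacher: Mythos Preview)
Your approach is correct and matches the paper's, modulo one slip: the Gundy--Varopoulos representation of $T_i$ in Theorem~\ref{thm:TiSijW-rep 2} has integrator $d\beta_s$, not $dB_s^i$, so $Y_t$ should read $\int_0^{t\wedge\tau}\X_iU_f(X_s,\eta_s)\,d\beta_s$; this is harmless for the argument since $\langle Y\rangle_t$ is the same either way and the differential-subordination step only uses $(\X_iU_f)^2\le\sum_k(\X_kU_f)^2+a^2(\partial_yU_f)^2$. Your handling of the duality step for $p>2$ is in fact more explicit than the paper's (which simply invokes ``the same method as for the proof of Theorem~\ref{thm:W}'', where however $W$ was self-adjoint while $T_i$ is not): your observation that $T_i^*$ admits the analogous representation with martingale $\int_0^{t\wedge\tau} a(\eta_s)\,\partial_yU_f(X_s,\eta_s)\,dB_s^i$, itself differentially subordinate to $M^f$ (resp.\ $N$), is exactly what is needed to close that step.
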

\begin{proof}
When $V\equiv0$, the operator $T_i$ can be rewritten as 
\[
T_if(x)= \frac12\lim_{y_0 \to +\infty} \mathbb{E}^{y_0} \left(\int_0^{\tau}   A_i(\X, \partial_y)^T U_f (X_s,\eta_s)\cdot (dB_s,d\beta_s) \mid X_\tau =x  \right),
\]
where $A_i$ is an $(n+1)\times (n+1)$ matrix with the entry $a_{(n+1),i}=1$ and otherwise $0$. 
Notice that the martingale   
\(
\int_0^{t \wedge \tau} A_i(\X, \partial_y)^T U_f (X_s,\eta_s)\cdot (dB_s,d\beta_s) 
\)
is differentially subordinate to the martingale $U_f (X_{t\wedge \tau},\eta_{t\wedge \tau})$.
It follows from Theorem \ref{thm:BW95} that
\[
\|T_i f\|_p\le \frac12(p^*-1) \|f\|_p.
\]

When $V\ne 0$, then the same method as for the proof of Theorem \ref{thm:W} implies the desired estimate.
 \end{proof}

\begin{cor}\label{thm:Sij}
Let  $1\le i,j\le n$. For all $1<p<\infty$ and all $f\in C_0^{\infty}(M)$ we have
\[
\|S_{ij} f\|_p\le \frac32(p^*-1) \|f\|_p.
\]
Moreover, if the potential $V\equiv0$, then
\[
\|S_{ij} f\|_p\le \frac12 (p^*-1)\|f\|_p.
\]
\end{cor}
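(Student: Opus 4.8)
The plan is to mimic exactly the argument given for Corollary \ref{thm:Ti} above, since $S_{ij}$ differs from $T_i$ only in that the stochastic integral is driven by $dB^j_s$ rather than $d\beta_s$. First I would recall the Gundy--Varopoulos representation of $S_{ij}$ from Theorem \ref{thm:TiSijW-rep 2}, namely
\[
S_{ij} f (x) =\frac12 \lim_{y_0 \to +\infty} \mathbb{E}^{y_0} \left( e^{\int_0^{\tau} V(X_u)du} \int_0^{\tau} e^{-\int_0^{s} V(X_u)du} \X_i U_f (X_s,\eta_s)  dB^j_s \mid X_\tau =x  \right),
\]
and observe that when $V\equiv 0$ the inner stochastic integral can be written as $\int_0^{t\wedge\tau} A_{ij}(\X,\partial_y)^T U_f(X_s,\eta_s)\cdot (dB_s,d\beta_s)$, where $A_{ij}$ is the $(n+1)\times(n+1)$ matrix with the single nonzero entry $a_{j,i}=1$ (putting $\X_i U_f$ in the slot corresponding to $dB^j$).

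The key point is then the differential subordination: by It\^o's formula \eqref{eq:Ito} the martingale $U_f(X_{t\wedge\tau},\eta_{t\wedge\tau})$ has quadratic variation
\[
2\int_0^{t\wedge\tau}\Big(\Gamma(U_f)(X_s,\eta_s)+\partial_yU_f(X_s,\eta_s)^2 a(\eta_s)^2\Big)ds
=2\int_0^{t\wedge\tau}\Big(\sum_{k=1}^n (\X_k U_f)^2(X_s,\eta_s)+(\partial_y U_f)^2 a^2\Big)ds,
\]
since $\Gamma(U_f)=\sum_k (\X_k U_f)^2$ under the representation $\Delta=\sum \X_k^2+\X_0$; whereas the driving martingale above has quadratic variation $2\int_0^{t\wedge\tau}(\X_i U_f)^2(X_s,\eta_s)\,ds$, which is pointwise dominated. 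Hence differential subordination holds, and Theorem \ref{thm:BW95} gives $\|S_{ij}f\|_p\le \frac12(p^*-1)\|f\|_p$ after using that conditional expectation is an $L^p$ contraction and that $\|f(X_\tau)\|_{L^p}=\|f\|_{L^p(M,\mu)}$.

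For the case $V\neq 0$ I would run the same modification as in the proof of Theorem \ref{thm:W}: set $Y_t=\int_0^t \X_i U_f(X_s,\eta_s)\,dB^j_s$, which is differentially subordinate to the martingale $N_t$ appearing there (the martingale part of $U_f(X_{t\wedge\tau},\eta_{t\wedge\tau})$), then use Theorem \ref{stinga-torrea} to rewrite $N_t=U_f(X_{t\wedge\tau},\eta_{t\wedge\tau})-U_f(X_0,\eta_0)+2\int_0^{t\wedge\tau}V(X_s)U_f(X_s,\eta_s)\,ds$, bound $\|N_\tau\|_p\le (p+1)\|f\|_p$ via Lenglart--L\'epingle--Pratelli applied to the non-negative submartingale $U_{|f|}(X_{t\wedge\tau},\eta_{t\wedge\tau})$, apply Theorem \ref{thm:BO15} to get $\|Y_\tau\|_p\le (p+1)(p^*-1)\|f\|_p$, and conclude $\|S_{ij}f\|_p\le\frac12(p+1)(p^*-1)\|f\|_p$ for $1<p\le 2$; the range $p>2$ follows by duality since $S_{ij}$ and $S_{ji}$ are $L^p$-adjoint (using self-adjointness of $L$ and $\mathcal K(y,-L)$), which yields the stated bound $\frac32(p^*-1)$. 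The only mild subtlety — and the step I'd expect to require the most care — is the differential subordination verification, i.e. making sure the $dB^j$-driven integrand $\X_i U_f$ genuinely contributes only one term to the quadratic variation of the comparison martingale; but this is immediate from the orthogonality of the Brownian components $B^1,\dots,B^n,\beta$ and the It\^o representation \eqref{eq:Ito}, so no essential new difficulty arises beyond what was already handled for $T_i$.
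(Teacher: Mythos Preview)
Your proposal is correct and follows essentially the same approach as the paper: the paper writes $S_{ij}$ via a rank-one matrix $A_{ij}$ acting on $(\X,\partial_y)^T U_f$, observes differential subordination to $U_f(X_{t\wedge\tau},\eta_{t\wedge\tau})$, and invokes Theorem~\ref{thm:BW95} for the $V\equiv 0$ case, leaving the $V\neq 0$ case implicit by analogy with Theorem~\ref{thm:W}. You have simply spelled out that analogy (the $N_t$ argument, Lenglart--L\'epingle--Pratelli, and the duality step $S_{ij}^*=S_{ji}$) in more detail than the paper does.
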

\begin{proof}
Similarly as for $T_i$, when $V\equiv0$ one can write $S_{ij}$  as 
\[
S_{ij}f(x)= \frac12\lim_{y_0 \to +\infty} \mathbb{E}^{y_0} \left(\int_0^{\tau}  A_{ij} (\X, \partial_y)^T U_f (X_s,\eta_s)\cdot (dB_s,d\beta_s) \mid X_\tau =x  \right),
\]
where $A_{ij}$ is an $(n+1)\times (n+1)$ matrix with the entries $a_{i,j}=1$ and otherwise $0$. Since  
\(
\int_0^{t \wedge \tau} A_i(\X, \partial_y)^T U_f (X_s,\eta_s)\cdot (dB_s,d\beta_s) 
\)
is differentially subordinate to $U_f (X_{t\wedge \tau},\eta_{t\wedge \tau})$, then Theorem \ref{thm:BW95} yields
\[
\|S_{ij} f\|_p\le \frac12(p^*-1) \|f\|_p.
\]
 \end{proof}

\subsection{Euclidean spaces and Lie groups of compact type}

We now apply our results to the case of Euclidean spaces and Lie groups of compact type.  In those cases,  for the transforms we are interested in, the operators $\X_i$'s and $\X_i^*$'s do commute with $L$. As a consequence, one has

\begin{equation}\label{eq:Ti2}
T_i f= \int_0^{+\infty} a(y) G(+\infty,y) \partial_y\mathcal{K} (y , -L)   \mathcal{K} (y , -L)\, \X_i fdy ,
\end{equation}
and
\begin{equation}\label{eq:Sij2}
S_{ij} f=\int_0^{+\infty}  G(+\infty,y) \mathcal{K} (y , -L)^2  \, \X_j^*\X_ifdy.
\end{equation}

\subsubsection{Brownian motion with negative drift as vertical diffusion}

Consider the Euclidean spaces $\R^n$. We assume that the potential $V$ in the operator $L$ is null.
In this case, $\X_i=\partial_{x_i}$ commutes with the Laplace operator $L=\Delta$. 

\begin{lemma}\label{lem:TiSijRd}
Let $1\le i,j\le n$ and $\sigma>0$, $m\ge 0$. For the choice $\mathcal B = \sigma^2 \frac{\partial^2 }{\partial y^2} - 2m \frac{\partial }{\partial y}$, one has
\[
T_i f=-\frac1{4} \left(-\Delta +\frac{m^2}{\sigma^2}\right)^{-1/2} \partial_{x_i}f
\]
and
\[
S_{ij} f =-\frac1{4} \left( \sqrt{-\Delta +\frac{m^2}{\sigma^2}} -\frac{m}{\sigma}  \right)^{-1} \left(-\Delta +\frac{m^2}{\sigma^2} \right)^{-1/2}   \, \partial_{x_i} \partial_{x_j}f.
\]
\end{lemma}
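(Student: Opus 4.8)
The plan is to specialize the general formulas \eqref{eq:Ti2} and \eqref{eq:Sij2} to the case $M=\R^n$, $L=\Delta$, and the vertical diffusion being Brownian motion with negative drift, using the explicit computations already carried out in the Brownian-motion-with-negative-drift subsection. From that subsection we have the two key explicit ingredients: first,
\[
\mathcal{K}(y,\lambda)=e^{-\frac{y}{\sigma}\left(\sqrt{\lambda+\frac{m^2}{\sigma^2}}-\frac{m}{\sigma}\right)},
\]
and second,
\[
G(+\infty,z)=\frac{1}{2m}\left(1-e^{-\frac{2m}{\sigma^2}z}\right),\qquad a(y)=\sigma.
\]
Writing $\kappa=\kappa(\lambda)=\sqrt{\lambda+\frac{m^2}{\sigma^2}}-\frac{m}{\sigma}\ge 0$ for brevity, so that $\mathcal{K}(y,\lambda)=e^{-y\kappa/\sigma}$, I would plug these into \eqref{eq:Ti2} and \eqref{eq:Sij2} with $\lambda$ replaced by $-\Delta$ (using the spectral calculus, which is legitimate since $\partial_{x_i}$ commutes with $\Delta$ so all the operators are functions of $\Delta$ acting on the fixed function $\partial_{x_i}f$ or $\partial_{x_i}\partial_{x_j}f$).

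The computation for $T_i$: from \eqref{eq:Ti2}, $T_i f = \int_0^{+\infty} \sigma\, G(+\infty,y)\,\partial_y\mathcal{K}(y,-\Delta)\,\mathcal{K}(y,-\Delta)\,\partial_{x_i}f\,dy$. Since $\partial_y\mathcal{K}(y,\lambda)=-\frac{\kappa}{\sigma}e^{-y\kappa/\sigma}$, we get $\partial_y\mathcal{K}\cdot\mathcal{K}=-\frac{\kappa}{\sigma}e^{-2y\kappa/\sigma}=\frac12\partial_y\!\left(\frac{1}{\,?\,}\right)$; more directly, $\sigma\,G(+\infty,y)\,\partial_y\mathcal{K}\,\mathcal{K} = \frac{1}{2m}(1-e^{-2my/\sigma^2})\cdot(-\kappa)e^{-2y\kappa/\sigma}$. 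I would then perform the two elementary integrals $\int_0^\infty e^{-2y\kappa/\sigma}dy=\frac{\sigma}{2\kappa}$ and $\int_0^\infty e^{-2y(\kappa/\sigma+m/\sigma^2)}dy = \frac{1}{2(\kappa/\sigma+m/\sigma^2)}=\frac{\sigma^2}{2(\sigma\kappa+m)}$, noting that $\sigma\kappa+m = \sigma\sqrt{\lambda+m^2/\sigma^2}$. Combining, $\int_0^\infty \sigma G(+\infty,y)\partial_y\mathcal K\,\mathcal K\,dy = \frac{-\kappa}{2m}\left(\frac{\sigma}{2\kappa}-\frac{\sigma^2}{2(\sigma\kappa+m)}\right) = \frac{-\kappa}{2m}\cdot\frac{\sigma(\sigma\kappa+m)-\sigma^2\kappa}{2\kappa(\sigma\kappa+m)} = \frac{-\kappa}{2m}\cdot\frac{\sigma m}{2\kappa(\sigma\kappa+m)} = \frac{-\sigma}{4(\sigma\kappa+m)} = \frac{-1}{4\sqrt{\lambda+m^2/\sigma^2}}$, which applied to $\partial_{x_i}f$ with $\lambda=-\Delta$ gives exactly $T_if = -\frac14(-\Delta+\frac{m^2}{\sigma^2})^{-1/2}\partial_{x_i}f$. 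For $S_{ij}$, from \eqref{eq:Sij2} we have $S_{ij}f = \int_0^\infty G(+\infty,y)\mathcal K(y,-\Delta)^2\,\partial_{x_j}^*\partial_{x_i}f\,dy$ and since $\partial_{x_j}^* = -\partial_{x_j}$ on $\R^n$, $\partial_{x_j}^*\partial_{x_i}f = -\partial_{x_i}\partial_{x_j}f$; the integral $\int_0^\infty \frac{1}{2m}(1-e^{-2my/\sigma^2})e^{-2y\kappa/\sigma}dy = \frac{1}{2m}\left(\frac{\sigma}{2\kappa}-\frac{\sigma^2}{2(\sigma\kappa+m)}\right) = \frac{\sigma}{4(\sigma\kappa+m)\kappa}\cdot\frac{?}{}$ — repeating the same algebra, $= \frac{1}{2m}\cdot\frac{\sigma m}{2\kappa(\sigma\kappa+m)} = \frac{\sigma}{4\kappa(\sigma\kappa+m)} = \frac{1}{4}\cdot\frac{1}{(\kappa\sigma/\sigma)(\sigma\kappa+m)/\sigma}$. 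Since $\sigma\kappa+m=\sigma\sqrt{\lambda+m^2/\sigma^2}$ and $\kappa = \sqrt{\lambda+m^2/\sigma^2}-m/\sigma$, this equals $\frac14\cdot\frac{1}{(\sqrt{\lambda+m^2/\sigma^2}-m/\sigma)\sqrt{\lambda+m^2/\sigma^2}}$, yielding $S_{ij}f = -\frac14\left(\sqrt{-\Delta+\tfrac{m^2}{\sigma^2}}-\tfrac{m}{\sigma}\right)^{-1}\left(-\Delta+\tfrac{m^2}{\sigma^2}\right)^{-1/2}\partial_{x_i}\partial_{x_j}f$, as claimed.

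The main obstacle is not any individual integral — each one is elementary — but rather making the spectral-calculus substitution $\lambda\mapsto -\Delta$ rigorous: one must check that the $y$-integral converges in the appropriate operator sense uniformly enough to commute the integral with the spectral decomposition, which is exactly what the standing decay and growth assumptions on $\mathcal K$, $\partial_y\mathcal K$ and $G(+\infty,\cdot)$ were imposed to guarantee. I would therefore note that these assumptions are satisfied for this explicit choice of $a$ and $b$ (using $\kappa(\lambda)>0$ for $\lambda>0$, so the exponentials decay), justify the interchange by Fubini together with the spectral theorem applied to the bounded continuous function $\lambda\mapsto \frac{-1}{4\sqrt{\lambda+m^2/\sigma^2}}$ (resp. its $S_{ij}$ analogue), and then the identities follow from the two scalar integral computations above. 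The case $m=0$ is recovered by continuity and reproduces the classical first- and second-order Riesz transform formulas, which serves as a sanity check.
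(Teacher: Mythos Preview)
Your proposal is correct and follows essentially the same approach as the paper: both substitute the explicit formulas for $\mathcal{K}(y,\lambda)$ and $G(+\infty,y)$ into \eqref{eq:Ti2} and \eqref{eq:Sij2} and compute the resulting elementary exponential integrals. The only cosmetic differences are that you introduce the shorthand $\kappa=\sqrt{\lambda+m^2/\sigma^2}-m/\sigma$ and carry out the algebra at the scalar level before invoking the spectral calculus, whereas the paper computes directly with the operator $-\Delta$ after splitting $e^{-2y\kappa/\sigma}=e^{2my/\sigma^2}e^{-\frac{2y}{\sigma}\sqrt{-\Delta+m^2/\sigma^2}}$.
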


\begin{proof}
Since $\partial_{x_i}$ commutes with $\Delta$, the operator $T_i$ becomes
\begin{align*}
T_i f&=  \left(\int_0^{+\infty} \frac{1-e^{\frac{2m}{\sigma^2}y}}{2m} \left(\sqrt{-\Delta +\frac{m^2}{\sigma^2}}-\frac{m}{\sigma}\right) e^{-\frac{2y}{\sigma}\sqrt{-\Delta+\frac{m^2}{\sigma^2}}} dy \right) \partial_{x_i} f
\\ &=
\frac1{2m}\left(\sqrt{-\Delta +\frac{m^2}{\sigma^2}}-\frac{m}{\sigma}\right)  \left( \frac\sigma{2} \left(\sqrt{-\Delta+\frac{m^2}{\sigma^2}}\right)^{-1} -\left( \frac{2}{\sigma} \sqrt{-\Delta+\frac{m^2}{\sigma^2}} -\frac{2m}{\sigma^2}\right)^{-1} \right)
\\ &=
\frac{\sigma}{4m}\left(\left(\sqrt{-\Delta +\frac{m^2}{\sigma^2}}-\frac{m}{\sigma}\right) \left(\sqrt{-\Delta +\frac{m^2}{\sigma^2}}\right)^{-1}- I \right)  \, \partial_{x_i} f
\\ &=
-\frac1{4} \left(-\Delta +\frac{m^2}{\sigma^2}\right)^{-1/2} \partial_{x_i}f.
\end{align*}
Similarly,  we have
\begin{align*}
S_{ij} f &= \left(  \int_0^{+\infty} \frac{1-e^{\frac{2m}{\sigma^2}y}}{2m} e^{-\frac{2y}{\sigma} \sqrt{-\Delta +\frac{m^2}{\sigma^2}}}  dy \right)  \, \partial_{x_i} \partial_{x_j}f
\\ &= 
-\frac\sigma{4m}\left( \left( \sqrt{-\Delta +\frac{m^2}{\sigma^2}} -\frac{m}{\sigma} \right)^{-1}-  \left(\sqrt{-\Delta +\frac{m^2}{\sigma^2}}\right)^{-1}\right)  \, \partial_{x_i} \partial_{x_j}f
\\ &= 
-\frac1{4} \left( \sqrt{-\Delta +\frac{m^2}{\sigma^2}} -\frac{m}{\sigma}  \right)^{-1} \left(-\Delta +\frac{m^2}{\sigma^2} \right)^{-1/2}   \, \partial_{x_i} \partial_{x_j}f.
\end{align*}
\end{proof}

We obtain therefore:

\begin{prop}\label{prop:RTconstant}
Let $1\le i,j\le n$ and $m\ge 0$, $\sigma>0$. Then
\begin{equation}\label{firstRiesz} 
\left\| \left(-\Delta +\frac{m^2}{\sigma^2}\right)^{-1/2}  \partial_{x_i}f\right\|_p \le  \cot\left(\frac{\pi}{2p^*}\right) \|f\|_p,
\end{equation} 
\begin{align}\label{secondRiesz}
\left\| \left( \sqrt{-\Delta +\frac{m^2}{\sigma^2}} -\frac{m}{\sigma}  \right)^{-1} \left(-\Delta +\frac{m^2}{\sigma^2} \right)^{-1/2}  \, \partial_{x_i} \partial_{x_j} f \right\|_p\le (p^*-1)\|f\|_p.
\end{align}
\end{prop}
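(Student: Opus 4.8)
The plan is to read both estimates off the explicit formulas of Lemma~\ref{lem:TiSijRd}. Writing $c=m^2/\sigma^2$, the operator in \eqref{firstRiesz} equals $-4T_i$ and the one in \eqref{secondRiesz} equals $-4S_{ij}$, where $T_i$ and $S_{ij}$ are the operators of Section~4 attached to the vertical diffusion $\mathcal B=\sigma^2\partial_y^2-2m\partial_y$ and to $V\equiv 0$. Corollaries~\ref{thm:Ti} and \ref{thm:Sij} only give $\|T_if\|_p,\ \|S_{ij}f\|_p\le\tfrac12(p^*-1)\|f\|_p$, hence merely the bound $2(p^*-1)$ in \eqref{firstRiesz}--\eqref{secondRiesz}; the point is therefore to refine the Gundy--Varopoulos representation of Theorem~\ref{thm:TiSijW-rep 2} so that the controlling martingale transform gains an extra factor $2$ in its conditional expectation --- and, in the case of $T_i$, becomes orthogonal to $M^f$. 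It suffices to argue for $f\in C_0^\infty(M)$ and then pass to $L^p$ by density.

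For \eqref{firstRiesz} I would replace the transform $\int_0^{t\wedge\tau}\X_iU_f\,d\beta_s$ of Theorem~\ref{thm:TiSijW-rep 2} by the ``rotated'' martingale
\[
\widehat Y_t=\int_0^{t\wedge\tau}\X_iU_f(X_s,\eta_s)\,d\beta_s-\int_0^{t\wedge\tau}\partial_yU_f(X_s,\eta_s)\,a(\eta_s)\,dB^i_s ,
\]
obtained by turning the $(dB^i,d\beta)$--block of the martingale $M^f$ of Lemma~\ref{martin} by a quarter turn; this is legitimate because $B^i$ and $\beta$ are independent with the same quadratic variation $\langle B^i\rangle_t=\langle\beta\rangle_t=2t$. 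Three facts are needed. First, by Lemma~\ref{martin}, $\langle M^f\rangle_t-\langle\widehat Y\rangle_t=2\int_0^{t\wedge\tau}\sum_{k\ne i}(\X_kU_f)^2\,ds$ is nonnegative and nondecreasing, so $\widehat Y$ is differentially subordinate to $M^f$. Second, using the independence of $\beta$ and $B^i$, $\langle\widehat Y,M^f\rangle_t=2\int_0^{t\wedge\tau}\X_iU_f\,\partial_yU_f\,a\,ds-2\int_0^{t\wedge\tau}\partial_yU_f\,a\,\X_iU_f\,ds=0$, so $\widehat Y$ and $M^f$ are orthogonal. Third --- the crucial point --- the conditional expectation of the added ``phantom'' term coincides with that of the genuine one: repeating the It\^o-isometry computation in the proof of Theorem~\ref{thm:TiSijW-rep 2}, using \eqref{eq:killedP} and one integration by parts on $M$, one finds
\[
\mathbb E^{y_0}\!\Big(-\!\int_0^{\tau}\!\partial_yU_f\,a\,dB^i_s\ \Big|\ X_\tau=x\Big)=2\!\int_0^{+\infty}\!a(y)G(y_0,y)\,\partial_y\mathcal K(y,-L)\,\mathcal K(y,-L)\,\X_if(x)\,dy=\mathbb E^{y_0}\!\Big(\int_0^{\tau}\!\X_iU_f\,d\beta_s\ \Big|\ X_\tau=x\Big),
\]
the sign produced by $\X_i^*=-\X_i$ on $\R^n$ cancelling the minus sign in front of the phantom. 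Letting $y_0\to+\infty$ gives the representation $T_if=\tfrac14\lim_{y_0\to+\infty}\mathbb E^{y_0}(\widehat Y_\tau\mid X_\tau=\cdot)$. The orthogonal martingale inequality of Theorem~\ref{thm:BW95} now yields $\|\widehat Y_\tau\|_p\le\cot(\pi/2p^*)\|M^f_\tau\|_p=\cot(\pi/2p^*)\|f\|_p$, and contraction of the conditional expectation gives $\|T_if\|_p\le\tfrac14\cot(\pi/2p^*)\|f\|_p$, i.e. \eqref{firstRiesz}.

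For \eqref{secondRiesz}, taken with $i\ne j$ (this is the genuine second order Riesz transform), I would run the same scheme with the \emph{symmetric} transform
\[
\widehat Y'_t=\int_0^{t\wedge\tau}\X_iU_f(X_s,\eta_s)\,dB^j_s+\int_0^{t\wedge\tau}\X_jU_f(X_s,\eta_s)\,dB^i_s .
\]
Since $i\ne j$ the Brownian motions $B^i$ and $B^j$ are independent, so $\langle\widehat Y'\rangle_t=2\int_0^{t\wedge\tau}\big((\X_iU_f)^2+(\X_jU_f)^2\big)\,ds\le\langle M^f\rangle_t$ with nondecreasing difference, and $\widehat Y'$ is differentially subordinate to $M^f$; moreover the same It\^o-isometry/integration-by-parts computation (using $\X_i\X_j=\X_j\X_i$ and the symmetry $S_{ij}=S_{ji}$) shows that the two summands of $\widehat Y'$ have the same conditional expectation given $X_\tau$, so $S_{ij}f=\tfrac14\lim_{y_0\to+\infty}\mathbb E^{y_0}(\widehat Y'_\tau\mid X_\tau=\cdot)$. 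Theorem~\ref{thm:BW95} then gives $\|\widehat Y'_\tau\|_p\le(p^*-1)\|f\|_p$, whence $\|S_{ij}f\|_p\le\tfrac14(p^*-1)\|f\|_p$, which is \eqref{secondRiesz}. (One can also avoid the fresh martingale step: since $c=m^2/\sigma^2$ one has $(\sqrt{-\Delta+c}-\tfrac m\sigma)(\sqrt{-\Delta+c}+\tfrac m\sigma)=-\Delta$, hence $-4S_{ij}=(-\Delta)^{-1}\partial_{x_i}\partial_{x_j}\circ\big(I+\tfrac m\sigma(-\Delta+c)^{-1/2}\big)$; the second factor is $I$ plus the $L^p$--contraction $\tfrac m\sigma(-\Delta+c)^{-1/2}=\int_0^{+\infty}e^{t\Delta}\,d\nu(t)$ with $\nu$ a probability measure, and one then invokes the sharp bound $\|(-\Delta)^{-1}\partial_{x_i}\partial_{x_j}\|_{p\to p}\le\tfrac12(p^*-1)$ for $i\ne j$ from \cite{BM,GeiSmiSak}.)

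The main obstacle is the third point in the $T_i$ argument: obtaining the constant $\tfrac14$, rather than the $\tfrac12$ of Corollary~\ref{thm:Ti}, in the Gundy--Varopoulos formula. Differential subordination alone only produces $\tfrac12(p^*-1)$; the sharp constant $\cot(\pi/2p^*)$ forces one both to make $\widehat Y$ orthogonal to $M^f$ (which is exactly what the phantom term $-\int\partial_yU_f\,a\,dB^i$ achieves) and to verify the somewhat delicate identity that this phantom contributes the \emph{same} conditional expectation as $\int\X_iU_f\,d\beta$, so that the two legs of $\widehat Y$ reinforce rather than cancel. Once that identity and the subordination/orthogonality bookkeeping of Lemma~\ref{martin} are in place, both inequalities drop out of Theorem~\ref{thm:BW95}.
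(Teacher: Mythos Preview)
Your argument is essentially identical to the paper's. For \eqref{firstRiesz} you and the paper build exactly the same ``rotated'' martingale $\widehat Y_t=\int\X_iU_f\,d\beta-\int\partial_yU_f\,a\,dB^i$ (in the paper this is called $N_t$, and the matrix $A_i$ has $a_{(n+1),i}=1$, $a_{i,(n+1)}=-\sigma$), verify the same subordination and orthogonality identities with respect to $M^f$, and deduce $T_if=\tfrac14\lim\mathbb E^{y_0}(\widehat Y_\tau\mid X_\tau=\cdot)$ by the same It\^o-isometry/integration-by-parts computation you describe. For \eqref{secondRiesz} you and the paper both average the two Gundy--Varopoulos representations of $S_{ij}$ to produce the symmetric martingale $\int\X_iU_f\,dB^j+\int\X_jU_f\,dB^i$ with norm-one transform matrix. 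Your factorization alternative via $(\sqrt{-\Delta+c}-m/\sigma)(\sqrt{-\Delta+c}+m/\sigma)=-\Delta$ is not in the paper, but it is a correct side remark rather than a different main line.

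One point where you are actually more careful than the paper: you explicitly restrict the $S_{ij}$ argument to $i\ne j$. The paper's symmetrized representation $S_{ij}=\tfrac14\lim\mathbb E^{y_0}(\int A_{ij}(\X,\partial_y)^TU_f\cdot(dB,d\beta))$ with $a_{i,j}=a_{j,i}=1$ is only valid when $i\ne j$; for $i=j$ the symmetrization does nothing and the coefficient stays $\tfrac12$, so the paper's written argument does not directly yield the constant $\tfrac14$ in that case either. Since the proposition as stated includes $i=j$, strictly speaking both your proof and the paper's leave the diagonal case unaddressed by the martingale route. (Your factorization alternative also relies on the \cite{BM,GeiSmiSak} bound, which is only sharp for $i\ne j$.)
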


\begin{proof}

Recall the Gundy-Varopoulos type representation of $T_i$ in Theorem \ref{thm:TiSijW-rep 2}:
\[
T_i f (x) =\frac{1}{2} \lim_{y_0 \to +\infty} \mathbb{E}^{y_0} \left(\int_0^{\tau}  \partial_{x_i} U_f (X_s,\eta_s)  d\beta_s \mid X_\tau =x  \right).
\] 
In a similar way we also  have
\[
T_if(x)=-\frac12 \lim_{y_0 \to +\infty} \mathbb{E}^{y_0} \left(  \int_0^{\tau} \sigma\, \partial_{y} U_f (X_s,\eta_s) dB_s^i \mid X_\tau =x \right).
\]
To see this, observe that  by It\^o's formula \eqref{eq:ito} and the It\^o isometry, one has for any $y_0>0$
\begin{align*}
\int_{M}g(x)\mathbb{E}^{y_0}& \left( \int_0^{\tau} \sigma\,  \partial_{y} U_f (X_s,\eta_s) dB_s^i \mid X_\tau =x  \right) d\mu(x) 
=
\mathbb{E}^{y_0} \left(g(X_\tau) \int_0^{\tau} \sigma\,   \partial_{y} U_f (X_s,\eta_s) dB_s^i\right) 
\\&
=2 \mathbb{E}^{y_0} \left( \int_0^{\tau} \sigma\, \partial_{x_i} U_g (X_s,\eta_s) \partial_{y} U_f (X_s,\eta_s) ds   \right) \\
&=
2\int_0^{+\infty}\int_{M} \sigma\, G(y_0,y)  \partial_{x_i} U_g (x,y) \partial_{y} U_f (x,y) d \mu(x) \, dy
\\&=
-2\int_{M} g(x)\int_0^{+\infty} \sigma\, G(y_0,y)  \partial_y\mathcal{K} (y , -L)  \partial_{x_i} \mathcal{K} (y , -L) f(x) dy d \mu(x).
\end{align*}
Taking the limit $y_0\to \infty$ gives the second expression of $T_if$.
Therefore
\begin{equation}\label{eq:fullvaro} 
T_if(x)= \frac14\lim_{y_0 \to +\infty} \mathbb{E}^{y_0} \left(\int_0^{\tau} A_i(\X, \partial_y)^T U_f (X_s,\eta_s)\cdot (dB_s,d\beta_s) \mid X_\tau =x  \right),
\end{equation} 
where $A_i$ is an $(n+1)\times (n+1)$ matrix with the entries $a_{(n+1),i}=1$, $a_{i,(n+1)}=-\sigma$ and otherwise $0$.

With this matrix we claim that (1) the martingale 
\(
N_t:=\int_0^{t \wedge \tau} A_i(\X, \partial_y)^T U_f (X_s,\eta_s)\cdot (dB_s,d\beta_s) 
\)
is differentially  subordinate  to $M_t^f=U_f (X_{t\wedge \tau},\eta_{t\wedge \tau})$ and that (2) the two martingales are orthogonal.  That is, $\langle N,M^f\rangle_t=0$. 
To verify (1) recall that  that 
\[
M_{t}^f=f(X_0,\eta_0)+\sum_{j=1}^n \int_0^{t\wedge \tau}  \partial_{x_j} U_f(X_s, \eta_s)dB_s^j+ \int_0^{t\wedge \tau} \partial_yU_f(X_s, \eta_s)\sigma d\beta_s,
\]
and 
\[
\langle M^f\rangle_t=2\sum_{j=1}^n \int_0^{t\wedge \tau}  \left(\partial_{x_j} U_f(X_s, \eta_s)\right)^2ds+
2\int_0^{t\wedge \tau} \left(\partial_yU_f(X_s, \eta_s)\right)^2\sigma^2 ds
\]
Similarly, 

\[
 N_t=\int_0^{t\wedge \tau} \partial_{x_i} U_f(X_s, \eta_s)d\beta_s-\int_0^{t\wedge \tau} \partial_yU_f(X_s, \eta_s)\sigma dB_{s}^{i} 
\]
and
$$ 
\langle N\rangle_t=2\int_0^{t\wedge \tau}  \left(\partial_{x_i} U_f(X_s, \eta_s)\right)^2ds+
2\int_0^{t\wedge \tau} \left(\partial_yU_f(X_s, \eta_s)\right)^2\sigma^2 ds.
$$
Thus 
$$
\langle M^f\rangle_t-\langle N\rangle_t=2\sum_{j\neq i}\int_0^{t\wedge \tau} \left(\partial_{x_j} U_f(X_s, \eta_s)\right)^2ds, 
$$
which is  a nondecreasing and nonnegative function of $t$. This proves the differential  subordination property. 

To prove the orthogonality we note that 
\begin{align*}
\langle N,M^f\rangle_t =&-\sigma\int_0^{t\wedge \tau}\partial_{x_i} U_f(X_s, \eta_s)\partial_yU_f(X_s, \eta_s) d\langle B^i\rangle_s\\
&+
\sigma\int_0^{t\wedge \tau}\partial_y U_f(X_s, \eta_s)\partial_{x_i}U_f(X_s, \eta_s) d\langle \beta \rangle_s\\
=& \,0. 
\end{align*}

We can now apply \eqref{eq:fullvaro} and the martingale inequality in Theorem \ref{thm:BW95}. It follows that

\[
\|T_i f\|_p\le \frac{1}{4}  \cot\left(\frac{\pi}{2p^*}\right) \|f\|_p
\]
and hence by Lemma \ref{lem:TiSijRd},
\[
\left\| \left(-\Delta +\frac{m^2}{\sigma^2}\right)^{-1/2}  \partial_{x_i}f\right\|_p =4\|T_i f\|_p\le \cot\left(\frac{\pi}{2p^*}\right) \|f\|_p.
\]

On the other hand
\begin{align*}
S_{ij} f(x)
&=\frac12 \lim_{y_0 \to +\infty} \mathbb{E}^{y_0} \left(\int_0^{\tau} \partial_{x_j} U_f (X_s,\eta_s) dB^i_s \mid X_\tau =x  \right)
\\&=
\frac14 \lim_{y_0 \to +\infty} \mathbb{E}^{y_0} \left(\int_0^{\tau}  A_{ij}(\X, \partial_y)^T U_f (X_s,\eta_s) \cdot (dB_s,d\beta_s) \mid X_\tau =x  \right),
\end{align*}
where $A_{ij}$ is an $(n+1)\times (n+1)$ matrix with the entries $a_{i,j}=a_{j,i}=1$ and otherwise $0$. Observe that the matrix norm of $A_{ij}$ is $1$, then the martingale   
\(
\int_0^{t \wedge \tau} A_{ij}(\X, \partial_y)^T U_f (X_s,\eta_s)\cdot (dB_s,d\beta_s) 
\)
is differentially subordinate  to $U_f (X_{t\wedge \tau},\eta_{t\wedge \tau})$.

It follows from Theorem \ref{thm:BW95} that 
\[
\|S_{ij} f\|_p\le \frac14(p^*-1) \|f\|_p
\]
and again by Lemma \ref{lem:TiSijRd},
\[
\left\| \left( \sqrt{-\Delta +\frac{m^2}{\sigma^2}} -\frac{m}{\sigma}  \right)^{-1} \left(-\Delta +\frac{m^2}{\sigma^2} \right)^{-1/2}  \, \partial_{x_i} \partial_{x_j} f \right\|_p\le (p^*-1)\|f\|_p.
\]
\end{proof}

\begin{remark}\label{limsigma}  
The degenerate case $\sigma=0$ corresponds to the case where $d\eta_t=-2mdt$, i.e. $\eta_t =2m(T-t)$,  where $T=\tau$ is deterministic.  This gives the space-time process introduced  in \cite{BM}. 
 That is, taking the limit $\sigma \to 0$  in \eqref{secondRiesz}  with $m>0$ fixed, we recover the main results in \cite{BM} and \cite{NazVol} concerning the norms of second order Riesz transforms.   Namely  that 
\begin{equation}\label{bestsecondRiesz1}
\|2R_iR_j\|_p=\left\| 2(-\Delta)^{-1}    \, \partial_{x_i} \partial_{x_j} \right\|_p\le  (p^*-1)\|f\|_p.
\end{equation} 
Indeed, formally as $\sigma \to 0$ one has
\begin{equation*}
\left( \sqrt{-\Delta +\frac{m^2}{\sigma^2}} -\frac{m}{\sigma}  \right)^{-1} \left(-\Delta +\frac{m^2}{\sigma^2} \right)^{-1/2}  \partial_{x_i} \partial_{x_j}f \rightarrow 2(-\Delta)^{-1} \partial_{x_i} \partial_{x_j}f.
\end{equation*}

In the same way,  we obtain that 
\begin{equation}\label{bestsecondRiesz2}
\|R_i^2-R_j^2\|_p=\left\| (-\Delta)^{-1}    \, \partial_{x_i} \partial_{x_i}-(-\Delta)^{-1}    \, \partial_{x_j} \partial_{x_j} \right\|_p\le (p^*-1)\|f\|_p.
\end{equation}
For $i \neq j$, the bounds $(p^*-1)$ in \eqref{bestsecondRiesz1} and \eqref{bestsecondRiesz2} were shown to be the best possible in \cite{GeiSmiSak}. 

When $i=j$, combining our methods here with the martingale inequalities from \cite{BanOse1} we would obtain (again by letting $\sigma\to 0$) the inequality 
\begin{equation}\label{bestsecondRiesz3}
\|R_i^2 f\|_p=\left\|(-\Delta)^{-1}    \, \partial_{x_i} \partial_{x_i} \right\|_p\le c_p\|f\|_p 
\end{equation} 
first proved in \cite{BanOse1}. 
Here, $c_p$ is the best constant found in \cite{Choi} for non-symmetric martingale transforms, i.e. martingale transforms where the predictable sequences take values in $[0, 1]$. Although the constant is not as nice as Burkholder's  $(p^*-1)$ for general martingale transforms, it can be estimated quite well and in particular it satisfies for large $p$,
$$
c_p\approx \frac{p}{2}+ \frac{1}{2}\log\left(\frac{1+e^{-2}}{2}\right) +\frac{\alpha_2}{p},
$$
where
$$\alpha_2=\left[\log\left(\frac{1+e^{-2}}{2}\right)\right]^2+\frac{1}{2}\log\left(\frac{1+e^{-2}}{2}\right)-2\left(\frac{e^{-2}}{1+e^{-2}}\right)^{2}.$$ 
In addition it follows easily from Burkholder's inequality  (see \cite[Theorem 4.1]{Choi}) that  the constant $c_p$ satisfies the bounds 
\begin{equation}\label{choibound}
\max\left(1, \frac{p^*}{2}-1\right)\leq c_p\leq \frac{p^*}{2}.
\end{equation}

It is also important to mention here that the constant $c_p$ in \eqref{bestsecondRiesz3} is also best possible.  This was first proved in \cite{BanOse1}.  For this and more general results related to second order Riesz transforms, we refer the reader to \cite{BanOse1} and particularly Theorems 1.4, 1.5, and Corollary 1.3.  
%

On the complex plane $\C$, which we identify with $\R^2$, the Beurling-Ahlfors operator is defined by $Bf=(-\Delta)^{-1}{\partial}^2f$, where $\partial $ is the Cauchy-Riemann operator 
$
\partial f=\frac{\partial f}{\partial x_1}-i \frac{\partial f}{\partial x_2}. 
$ 
A longstanding open problem with connections to several areas of analysis, PDE's and geometry,  known as   Iwaniec's conjecture  \cite[p.129]{AIM},  asserts that  
\begin{equation}\label{BA-Conj}
\|Bf\|_p\leq (p^*-1)\|f\|_p, \,\,\, 1<p<\infty, 
\end{equation} 
for all $f:\C\to \C$, $f\in C_{0}^{\infty}(\C)$. 

That the constant $(p^*-1)$ in \eqref{BA-Conj} cannot be improved has been known for many years, see \cite{AIM}. Writing the operator $B=R_1^2-R_2^2+2iR_1R_2$ in terms of Riesz transforms we see from the above discussion that the real and imaginary parts of the Beurling-Ahlfors operator have the same norm as the martingale transforms, that is, the  Burkholder constant $p^*-1$. This is in fact the same for  functions taking values in a Banach space with the UMD property, see \cite[Theorem 1.1]{GeiSmiSak}.    Applying  \eqref{bestsecondRiesz1} and \eqref{bestsecondRiesz2}  leads to the estimate $\|Bf\|_p\leq 2(p^*-1)\|f\|_p$. 
This bound was first proved in \cite{NazVol} and \cite{BM} and later improved to $1.575(p^*-1)$ in \cite{BanJan} by martingale inequalties applied to martingales satisfying certain orthogonality properties.  For a detailed discussion of these results we refer the reader to \cite{Ban}. The key point in \cite{BM} and \cite{BanJan} is to use the martingale techniques applied to the space-time process.  That is, build the martingales on the process $(X_t, T-t)$ which arise from the heat extension rather than the Poisson extension.   Given that we now know that the process $(X_t, T-t)$ arises from the general Poison extensions treated in this paper by letting  $\sigma\to 0$, it is natural to wonder if further progress on Iwaniec's conjecture can be made by better choices of the vertical diffusion $\eta_t$.


It is also interesting to note that as $\sigma\to +\infty$ (or equivalently $m \to 0$), we get the inequality 
$$
\norm{ \left(-\Delta\right)^{-1/2}\partial_{x_i}f}_p \le \cot\left(\frac{\pi}{2p^*}\right) \|f\|_p,
$$
which is sharp as shown in \cite{IM} and \cite{BW}.  Thus, the inequalities \eqref{firstRiesz} and \eqref{secondRiesz} are both sharp in the sense that there is no universal constant $C<1$ independent of $\sigma$ and $m$ for which the first holds with $C\cot\left(\frac{\pi}{2p^*}\right)$ on the right hand side and the second  with $C(p^*-1)$.  
\end{remark}

%
%

The previous methods can be applied  to Lie groups of compact type. Let $G$ be a Lie group of compact type with Lie algebra $\mathfrak{g}$.  We recall that $G$ is called a Lie group of compact type if its Lie algebra $\mathfrak{g}$ admits an $\mathbf{Ad}$-invariant inner product. In that case, this equips $G$ with a bi-invariant metric. Note that Euclidean spaces are examples of Lie groups of compact type so that this framework is a generalization of the Euclidean one.

We consider  an orthonormal basis $\X_1,\cdots, \X_n$ of $\mathfrak{g}$.  In this setting the Laplace-Beltrami operator can be written as 
\[
L= \sum_{i=1}^n \X_i^2.
\]
Observe that $L$ is essentially self-adjoint on the space  of smooth and compactly supported functions. Moreover, $\X_i^*=-\X_i$ and $\X_i$ commutes with $L$. In the same manner as Euclidean spaces case we obtain the following result. 
\begin{prop}\label{prop:RTconstantLie}
Let G be a Lie group of compact type endowed with a bi-invariant
Riemannian structure. Let $1\le i,j\le n$ and $m\ge 0$. Then
\[
\left\|(-L +m^2)^{-1/2} \X_i f\right\|_p \le \cot\left(\frac{\pi}{2p^*}\right) \|f\|_p,
\]
\[
\left\| \left( \sqrt{-L +m^2} -m \right)^{-1} \left(-L +m^2\right)^{-1/2}   \, \frac{1}{2} ( \X_i \X_j  +\X_j \X_i )f \right\|_p\le (p^*-1)\|f\|_p.
\]
\end{prop}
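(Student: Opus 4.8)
The argument is a transcription of the Euclidean one in Proposition \ref{prop:RTconstant}, using the two structural features of the compact-type setting: $\X_i^*=-\X_i$ and $[\X_i,L]=0$ (the latter because, for a bi-invariant metric, $L=\sum_{k=1}^n\X_k^2$ is the Casimir operator, which is central in the universal enveloping algebra, so $\X_i$ commutes with every $\mathcal K(y,-L)$). Throughout $V\equiv0$, so $M_t^f=U_f(X_{t\wedge\tau},\eta_{t\wedge\tau})$ is itself a martingale. First I would fix $m\ge0$ and take the vertical diffusion with $a(y)\equiv1$, $b(y)\equiv-2m$, so that, as in Section 3.3, $\mathcal K(y,\lambda)=e^{-y(\sqrt{\lambda+m^2}-m)}$ and $G(+\infty,y)=\frac1{2m}(1-e^{-2my})$ (read as $y$ when $m=0$). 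Because $\X_i$ commutes with $L$, formulas \eqref{eq:Ti2} and \eqref{eq:Sij2}, once the scalar $y$-integrals are computed exactly as in Lemma \ref{lem:TiSijRd} (integration by parts in $y$, using $G(+\infty,0)=0$ and $\partial_yG(+\infty,y)=e^{-2my}$), become
\[
T_i f=-\tfrac14(-L+m^2)^{-1/2}\,\X_i f,\qquad \tfrac12(S_{ij}+S_{ji})f=-\tfrac14\big(\sqrt{-L+m^2}-m\big)^{-1}(-L+m^2)^{-1/2}\,\tfrac12(\X_i\X_j+\X_j\X_i)f,
\]
where the symmetrization in the second identity uses $\X_j^*\X_i=-\X_j\X_i$. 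The standing decay and polynomial-growth hypotheses of Section 3 guarantee the $y$-integrals converge and may be evaluated through the spectral theorem.

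For the first-order transform I would next produce the full Gundy--Varopoulos representation as in \eqref{eq:fullvaro}: combining the two expressions for $T_if$ coming from Theorem \ref{thm:TiSijW-rep 2} (one against $d\beta_s$, the other --- obtained by a further integration by parts against $G$ --- against $dB_s^i$) gives
\[
T_if(x)=\tfrac14\lim_{y_0\to+\infty}\mathbb E^{y_0}\Big(\int_0^{\tau}A_i(\X,\partial_y)^T U_f(X_s,\eta_s)\cdot(dB_s,d\beta_s)\,\Big|\,X_\tau=x\Big),
\]
where $A_i$ is the $(n+1)\times(n+1)$ matrix with $a_{(n+1),i}=1$, $a_{i,(n+1)}=-1$ and all other entries zero. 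From the It\^o expansion \eqref{eq:ito} of $M_t^f$ (with $V=0$) one checks, as in the Euclidean case, that $N_t=\int_0^{t\wedge\tau}A_i(\X,\partial_y)^T U_f\cdot(dB_s,d\beta_s)$ satisfies $\langle M^f\rangle_t-\langle N\rangle_t=2\sum_{k\neq i}\int_0^{t\wedge\tau}(\X_k U_f)^2\,ds\ge0$ (nondecreasing) and $\langle N,M^f\rangle_t\equiv0$, i.e. $N$ is differentially subordinate to and orthogonal to $M^f$. Since $\|M_\tau^f\|_p=\|f(X_\tau)\|_p=\|f\|_p$, the orthogonal martingale transform inequality of Theorem \ref{thm:BW95} together with the $L^p$-contractivity of conditional expectation give $\|T_if\|_p\le\tfrac14\cot\!\left(\frac{\pi}{2p^*}\right)\|f\|_p$, whence $\|(-L+m^2)^{-1/2}\X_if\|_p=4\|T_if\|_p\le\cot\!\left(\frac{\pi}{2p^*}\right)\|f\|_p$.

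For the second-order transform I would repeat this with $\tfrac12(S_{ij}+S_{ji})$: Theorem \ref{thm:TiSijW-rep 2} gives
\[
\tfrac12(S_{ij}+S_{ji})f(x)=\tfrac14\lim_{y_0\to+\infty}\mathbb E^{y_0}\Big(\int_0^{\tau}A_{ij}(\X,\partial_y)^T U_f(X_s,\eta_s)\cdot(dB_s,d\beta_s)\,\Big|\,X_\tau=x\Big),
\]
with $A_{ij}$ the symmetric matrix having $a_{i,j}=a_{j,i}=1$ and all other entries zero, so that $\|A_{ij}\|\le1$; the associated martingale is then differentially subordinate to $M^f$, and Burkholder's inequality (Theorem \ref{thm:BW95}) yields $\|\tfrac12(S_{ij}+S_{ji})f\|_p\le\tfrac14(p^*-1)\|f\|_p$. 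Combined with the identity from the first paragraph this gives $\|(\sqrt{-L+m^2}-m)^{-1}(-L+m^2)^{-1/2}\,\tfrac12(\X_i\X_j+\X_j\X_i)f\|_p\le(p^*-1)\|f\|_p$. One then passes from $f\in C_0^\infty(G)$ to general $f\in L^p$ by density.

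The proof presents no genuinely new obstacle: it runs parallel to the Euclidean argument, the only novelties being the replacement of $\partial_{x_i}$ by $\X_i$ and of $\partial_{x_i}\partial_{x_j}$ by $\tfrac12(\X_i\X_j+\X_j\X_i)$, made legitimate by $[\X_i,L]=0$ and $\X_i^*=-\X_i$. The two points requiring (routine) care are (i) justifying the operator identities of the first paragraph --- that the $y$-integrals converge in the strong operator topology and define the stated bounded functions of $L$, which is where the standing decay hypotheses enter --- and (ii) the degenerate value $m=0$, where $G(+\infty,y)$ and $(\sqrt{-L+m^2}-m)^{-1}$ must be read by continuity, or, alternatively, the estimates are proved for $m>0$ and $m\to0$ is taken as in Remark \ref{limsigma}.
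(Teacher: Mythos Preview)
Your proposal is correct and follows exactly the route the paper intends: the paper's own proof is the single sentence ``In the same manner as Euclidean spaces case we obtain the following result,'' and you have spelled out precisely that transcription, correctly isolating the two features $\X_i^*=-\X_i$ and $[\X_i,L]=0$ that make the Euclidean computations of Lemma \ref{lem:TiSijRd} and the martingale representation \eqref{eq:fullvaro} go through verbatim, and correctly noting that in the non-commutative setting the symmetrization $\tfrac12(S_{ij}+S_{ji})$ is what produces $\tfrac12(\X_i\X_j+\X_j\X_i)$.
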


The proof of Theorem \ref{compact lie 1} easily  follows from
 Proposition \ref{prop:RTconstantLie} by integrating with respect to $m$.

\subsubsection{Bessel process as vertical diffusion}

In this section, we work on Lie groups of compact type endowed with bi-invariant
Riemannian structures.

\begin{lemma}
Let G be a Lie group of compact type endowed with a bi-invariant
Riemannian structure. Let $1\le i,j\le n$.  For the choice $\mathcal B = \frac{\partial^2 }{\partial y^2} +b(y) \frac{\partial }{\partial y}$ with  $b(y)=\frac{\gamma}{y}$, $-1<\gamma<1$, one has
\[
T_i f=-\frac{\pi^2 \Gamma(4s)}{2^{8s} s^2 \Gamma(s)^4} (-L)^{-1/2} \X_i
\]
and
\[
S_{ij} f =\frac{s}{2s+1}  (-L)^{-1} \X_i \X_j,
\]
where $\gamma = 1-2s$.
\end{lemma}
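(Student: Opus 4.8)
The plan is to compute the two operators $T_i$ and $S_{ij}$ from their defining formulas \eqref{eq:Ti2} and \eqref{eq:Sij2}, using the explicit data already recorded for the Bessel vertical diffusion: from the paragraph on Bessel processes we have $a(y)=1$, $h(x)=y^\gamma$ (so $h'(y)/h(y)=\gamma/y$), $G(+\infty,y)=\tfrac{y}{2s}$, and
\[
\mathcal{K}_s(y,\lambda)=\frac{2^{1-s}}{\Gamma(s)}\,y^s\lambda^{s/2}K_s(y\lambda^{1/2}),\qquad \gamma=1-2s .
\]
Since $G$ is a Lie group of compact type with $L=\sum_i\X_i^2$, each $\X_i$ (and $\X_i^*=-\X_i$) commutes with $L$, so $\X_i$ and $L$ can be pulled outside the $y$-integral. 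By the spectral theorem it then suffices to evaluate the two scalar integrals obtained by replacing $-L$ by $\lambda>0$, namely
\[
\int_0^{+\infty} G(+\infty,y)\,\partial_y\mathcal{K}_s(y,\lambda)\,\mathcal{K}_s(y,\lambda)\,dy
\quad\text{and}\quad
\int_0^{+\infty} G(+\infty,y)\,\mathcal{K}_s(y,\lambda)^2\,dy ,
\]
and to identify the resulting function of $\lambda$ as the asserted constant times $\lambda^{-1/2}$ (resp. $\lambda^{-1}$).

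For $S_{ij}$ I would start from \eqref{eq:Sij2}: since $\X_j^*\X_i=-\X_j\X_i$, we get $S_{ij}f=-\big(\int_0^\infty \tfrac{y}{2s}\,\mathcal{K}_s(y,\lambda)^2\,dy\big)\big|_{\lambda=-L}\X_j\X_i f$. Plugging in $\mathcal{K}_s$ and substituting $u=y\lambda^{1/2}$ turns this into $\tfrac{2^{1-2s}}{2s\,\Gamma(s)^2}\,\lambda^{-1}\int_0^\infty u^{2s+1}K_s(u)^2\,du$; the integral is computed by \eqref{McD2} with $\alpha=2s+2$, $\nu=s$ (valid since $\alpha>2\nu>0$), giving $\tfrac{\sqrt\pi}{4\Gamma(s+3/2)}\Gamma(s+1)\Gamma(1)\Gamma(2s+1)$, and after simplification with the duplication formula this should collapse to $\tfrac{s}{2s+1}$, yielding $S_{ij}f=\tfrac{s}{2s+1}(-L)^{-1}\X_i\X_j f$ (the sign working out because $\X_j\X_i=\X_i\X_j$ on a Lie group? — more carefully, $\X_j^*\X_i\mathcal{K}=\mathcal{K}\X_j^*\X_i=-\mathcal{K}\X_j\X_i$, so I should track whether the final answer is in terms of $\X_i\X_j$ or $\X_j\X_i$; on a general Lie group these differ, so I expect the correct reading of \eqref{eq:Sij2} to produce exactly $\X_i\X_j$ as stated, and I would double-check this against the symmetrized form used elsewhere).

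For $T_i$ I would use the alternative representation of the multiplier from the Lemma in Section 3.3, which gives $\int_0^\infty G(+\infty,y)\partial_y\mathcal{K}^2 a^2\,dy=\tfrac12-\lambda\int_0^\infty G(+\infty,y)\mathcal{K}^2\,dy$; but here $T_i$ involves the product $\partial_y\mathcal{K}\cdot\mathcal{K}=\tfrac12\partial_y(\mathcal{K}^2)$, so $\int_0^\infty \tfrac{y}{2s}\,\tfrac12\partial_y(\mathcal{K}_s^2)\,dy = -\tfrac{1}{4s}\int_0^\infty \mathcal{K}_s(y,\lambda)^2\,dy$ after integrating by parts (the boundary terms vanish by the decay of $\mathcal{K}_s$ at $\infty$ and the $y$ factor at $0$). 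Then $\int_0^\infty \mathcal{K}_s(y,\lambda)^2\,dy=\tfrac{2^{2-2s}}{\Gamma(s)^2}\lambda^{s-1/2}\int_0^\infty y^{2s}K_s(y\lambda^{1/2})^2\,dy=\tfrac{2^{2-2s}}{\Gamma(s)^2}\lambda^{-1/2}\int_0^\infty u^{2s}K_s(u)^2\,du$, and \eqref{McD2} with $\alpha=2s+1$, $\nu=s$ gives $\tfrac{\sqrt\pi}{4\Gamma(s+1)}\Gamma(s+1/2)\Gamma(1/2)\Gamma(2s+1/2)$. Collecting constants and simplifying with Legendre's duplication formula $\Gamma(s)\Gamma(s+1/2)=2^{1-2s}\sqrt\pi\,\Gamma(2s)$ and $\Gamma(2s)\Gamma(2s+1/2)=2^{1-4s}\sqrt\pi\,\Gamma(4s)$ should produce the stated coefficient $-\dfrac{\pi^2\Gamma(4s)}{2^{8s}s^2\Gamma(s)^4}$ times $\lambda^{-1/2}$, hence $T_if=-\dfrac{\pi^2\Gamma(4s)}{2^{8s}s^2\Gamma(s)^4}(-L)^{-1/2}\X_i f$.

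The routine parts are the substitutions and the invocation of the integral formula \eqref{McD2}; the only genuine obstacle is the bookkeeping of Gamma-function constants — verifying the convergence conditions $\alpha>2\nu>0$ for each application of \eqref{McD2}, correctly handling the $\lambda$-powers coming out of $\mathcal{K}_s$, and repeatedly applying the duplication formula without error to reach the compact closed forms. I would also need a brief justification that the $y$-integrals converge and that differentiation under the spectral integral is legitimate, which follows from the standing decay/growth assumptions on $\mathcal{K}$, $\partial_y\mathcal{K}$ and $G(+\infty,\cdot)$ imposed in Section 3 together with the asymptotics $K_s(u)\sim \tfrac12\Gamma(s)(u/2)^{-s}$ as $u\to 0$ (for $s>0$) and exponential decay as $u\to\infty$.
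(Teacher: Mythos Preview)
Your approach is essentially identical to the paper's: write $\partial_y\mathcal{K}\cdot\mathcal{K}=\tfrac12\partial_y(\mathcal{K}^2)$, integrate by parts against $G(+\infty,y)=y/(2s)$, substitute $u=y\lambda^{1/2}$, and invoke formula \eqref{McD2} with $\alpha=2s+1$ (for $T_i$) and $\alpha=2s+2$ (for $S_{ij}$). The paper's proof records exactly these two scalar computations and nothing more; your version simply spells out the Gamma-function bookkeeping and the sign tracking for $\X_j^*\X_i$ that the paper leaves to the reader.
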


\begin{proof}
Using the Bessel process, we recall that 
\[
\mathcal{K}_s (y , \lambda)=\frac{2^{1-s}}{\Gamma(s)} y^s \lambda^{s/2}K_s ( y \lambda^{1/2}),
\]
with  $\gamma = 1-2s$ and that $G(+\infty,y)=\frac{y}{2s}$. Therefore
\begin{align*}
\int_0^{+\infty} G(+\infty,y) \partial_y\mathcal{K} (y , \lambda)   \mathcal{K} (y , \lambda) dy &=\frac{1}{2} \int_0^{+\infty} G(+\infty,y) \partial_y ( \mathcal{K} (y , \lambda)^2)    dy \\
 &=-\frac{1}{4s} \frac{2^{2-2s}}{\Gamma(s)^2} \int_0^{+\infty}  y^{2s} \lambda^{s}K_s ( y \lambda^{1/2})^2 dy \\
 &= -\frac{\pi^2 \Gamma(4s)}{2^{8s} s^2 \Gamma(s)^4} \, \lambda^{-1/2}.
\end{align*}
Similarly
\begin{align*}
\int_0^{+\infty} G(+\infty,y)  \mathcal{K} (y , \lambda)^2 dy&=\frac{1}{2s} \frac{2^{2-2s}}{\Gamma(s)^2} \int_0^{+\infty}  y^{2s+1} \lambda^{s}K_s ( y \lambda^{1/2})^2 dy=\frac{s}{2s+1} \lambda^{-1}.
\end{align*}
\end{proof}

Using the Bessel process as a vertical diffusion, one deduces therefore:

\begin{prop}\label{prop:RTconstantLie2}
Let G be a Lie group of compact type endowed with a bi-invariant
Riemannian structure. Let $1\le i,j\le n$. Then for every $s \in (0,1)$
\[
\left\|(-L )^{-1/2} \X_i f\right\|_p \le \frac{2^{8s} s^2 \Gamma(s)^4}{4\pi^2 \Gamma(4s)}  \cot\left(\frac{\pi}{2p^*}\right) \|f\|_p,
\]
\[
\left\| \frac{1}{2} ( \X_i \X_j  +\X_j \X_i )(-L)^{-1}f \right\|_p\le \frac{2s+1}{4s} (p^*-1)\|f\|_p.
\]
\end{prop}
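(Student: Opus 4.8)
The plan is to combine the explicit operator identities from the preceding lemma with the Gundy--Varopoulos representations of Theorem~\ref{thm:TiSijW-rep 2} and the sharp martingale inequalities of Theorem~\ref{thm:BW95}, following exactly the scheme already carried out for the Brownian-motion-with-drift choice in Proposition~\ref{prop:RTconstantLie}. Since we are on a Lie group of compact type with $V\equiv 0$, $\X_i^*=-\X_i$, and $\X_i$ commutes with $L=\sum_{i=1}^n\X_i^2$, the operators $T_i$ and $S_{ij}$ reduce to the Fourier-multiplier form \eqref{eq:Ti2}--\eqref{eq:Sij2}, and the lemma just proved identifies
\[
T_i=-\frac{\pi^2\Gamma(4s)}{2^{8s}s^2\Gamma(s)^4}\,(-L)^{-1/2}\X_i,
\qquad
S_{ij}=\frac{s}{2s+1}\,(-L)^{-1}\X_i\X_j .
\]
Thus it suffices to bound $\|T_i f\|_p$ and $\|S_{ij}f\|_p$ and then divide by the (nonzero) explicit constants to read off the stated inequalities; the constant $\frac{2^{8s}s^2\Gamma(s)^4}{4\pi^2\Gamma(4s)}$ is just $\big(\tfrac12\cdot\tfrac{\pi^2\Gamma(4s)}{2^{8s}s^2\Gamma(s)^4}\big)^{-1}$ times the factor $\tfrac12$ from the representation, and similarly $\frac{2s+1}{4s}=\big(\tfrac12\cdot\tfrac{s}{2s+1}\big)^{-1}\cdot\tfrac12$.

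For $T_i$ I would mimic the argument around \eqref{eq:fullvaro}: using It\^o's formula \eqref{eq:ito} (with $V=0$) and the It\^o isometry one shows the two representations
\[
T_i f(x)=\tfrac12\lim_{y_0\to+\infty}\mathbb E^{y_0}\!\Big(\int_0^\tau \X_i U_f(X_s,\eta_s)\,d\beta_s\,\big|\,X_\tau=x\Big)
=-\tfrac12\lim_{y_0\to+\infty}\mathbb E^{y_0}\!\Big(\int_0^\tau \partial_y U_f(X_s,\eta_s)a(\eta_s)\,dB_s^i\,\big|\,X_\tau=x\Big),
\]
average them to obtain a representation against an $(n+1)\times(n+1)$ antisymmetric matrix $A_i$ with entries $a_{(n+1),i}=1$, $a_{i,(n+1)}=-1$ (here $a(y)=1$), and verify, exactly as in Proposition~\ref{prop:RTconstantLie}'s proof, that the resulting martingale is both differentially subordinate \emph{and} orthogonal to $M_t^f=U_f(X_{t\wedge\tau},\eta_{t\wedge\tau})$. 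The orthogonal case of Theorem~\ref{thm:BW95} then yields $\|T_i f\|_p\le\tfrac14\cot(\pi/2p^*)\|f\|_p$, and combined with the lemma this gives the first inequality. For $S_{ij}$ one uses the representation
\[
S_{ij}f(x)=\tfrac12\lim_{y_0\to+\infty}\mathbb E^{y_0}\!\Big(\int_0^\tau \X_i U_f(X_s,\eta_s)\,dB_s^j\,\big|\,X_\tau=x\Big)
=\tfrac14\lim_{y_0\to+\infty}\mathbb E^{y_0}\!\Big(\int_0^\tau A_{ij}(\X,\partial_y)^T U_f\cdot(dB_s,d\beta_s)\,\big|\,X_\tau=x\Big),
\]
with $A_{ij}$ symmetric, entries $a_{i,j}=a_{j,i}=1$ and otherwise $0$, whose operator norm is $1$, so the transform martingale is differentially subordinate to $M^f$ and the non-orthogonal case of Theorem~\ref{thm:BW95} gives $\|S_{ij}f\|_p\le\tfrac14(p^*-1)\|f\|_p$; dividing by $\tfrac12\cdot\tfrac{s}{2s+1}$ produces $\tfrac{2s+1}{4s}(p^*-1)$ and, since $\tfrac12(\X_i\X_j+\X_j\X_i)(-L)^{-1}=\tfrac12(S_{ij}+S_{ji})$, the symmetrized second-order bound follows. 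Finally one extends from $f\in C_0^\infty(M)$ to all of $L^p$ by density.

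The main obstacle is not conceptual but bookkeeping: one must check that the standing decay/growth assumptions on $G$ and $\mathcal K$ (listed in Section~3) are genuinely satisfied for the Bessel choice $a\equiv1$, $b(y)=\gamma/y$, so that all the integrals defining $T_i$, $S_{ij}$, and the quadratic variations converge and the integration-by-parts and limit interchanges in Theorem~\ref{thm:TiSijW-rep 2} are legitimate; this amounts to the known asymptotics of $K_s(x)$ (rapid decay as $x\to\infty$, the $y^s$ behavior as $y\to0$) together with $G(+\infty,y)=y/(2s)$ having linear growth, which is covered by formula~\eqref{McD2}. The only other delicate point, already handled in Proposition~\ref{prop:RTconstantLie}, is verifying the \emph{orthogonality} $\langle N,M^f\rangle_t=0$ for the $T_i$ martingale, which hinges on the independence of the horizontal Brownian motion $B$ and the vertical Brownian motion $\beta$ together with the antisymmetric placement of the $\pm1$ entries in $A_i$; the computation is identical to the one in that proof with $\sigma=1$.
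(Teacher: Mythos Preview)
Your proposal is correct and takes essentially the same route as the paper, which in one sentence invokes the martingale representations (exactly as in the proof of Proposition~\ref{prop:RTconstantLie}) to obtain $\|T_i f\|_p\le\tfrac14\cot(\pi/2p^*)\|f\|_p$ and $\|S_{ij}f\|_p\le\tfrac14(p^*-1)\|f\|_p$, and then divides by the explicit constants from the preceding lemma. Two small cosmetic points: your arithmetic narrations are slightly off (one divides $\tfrac14$ by $\tfrac{s}{2s+1}$, not by $\tfrac12\cdot\tfrac{s}{2s+1}$), and in the non-abelian case the symmetric-matrix $A_{ij}$ representation actually yields $\tfrac12(S_{ij}+S_{ji})$ rather than $S_{ij}$ itself---but that is precisely the symmetrized operator you want, so the argument goes through unchanged.
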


\begin{proof} Since as before the martingale representations for $T_i f$ and $S_{ij} f$ give that 
$\|T_i f\|_p\leq \frac{1}{4}\cot\left(\frac{\pi}{2p^*}\right) \|f\|_p$ and $\|S_{ij} f\|_p\leq \frac{1}{4}(p^*-1)\|f\|_p$,  both inequalities follow immediately. 

\end{proof} 

Of course the constant $\frac{2s+1}{4s}$ is best for $s\to1$ which corresponds to 0-dimensional Bessel process as a vertical diffusion. On the other hand the constant $\frac{2^{8s} s^2 \Gamma(s)^4}{4\pi^2 \Gamma(4s)}$ is optimal for $s=1/2$ which corresponds to 1-dimensional Bessel process (=Brownian motion) as a vertical diffusion.

\subsection{Generalized Riesz transform on vector bundles}

We consider the framework introduced in  Section 3.1 of \cite{BBC}.
Let $M$ be a $n$-dimensional  smooth complete Riemannian
manifold and let $\mathcal{E}$ be a finite-dimensional vector
bundle over $M$. We denote by $\Gamma ( M,
\mathcal{E} )$ the space of smooth  sections of this bundle. Let  $\nabla$
denote a metric connection on $\mathcal{E}$. We consider an operator on $\Gamma ( M,
\mathcal{E} )$  that can be written as
\[
\mathcal{L}=\mathcal{F}+\nabla_{0}+ \sum_{i=1}^n \nabla_{i}^2,
\]
where
\[
\nabla_i=\nabla_{\X_i}, \quad 0 \le i \le n,
\]
and the $\X_i$'s  are smooth vector fields on $M$ and 
$\mathcal{F}$  is a smooth symmetric and non positive potential (that is a smooth section of the
bundle $\mathbf{End}(\mathcal{E})$). We assume that $\mathcal{L}$ is locally subelliptic, non-positive and essentially self-adjoint on the space $\Gamma_0(M,\mathcal{E})$ of smooth and compactly supported sections. We consider then  a first order differential operator $d_a$ on $\Gamma ( M,
\mathcal{E} )$ that can be written as
\[
d_a=\sum_{i=1}^n a_i \nabla_{\X_i},
\]
where $a_1,\cdots,a_n$ are smooth sections of the bundle $\mathbf{End}(\mathcal{E})$. Assume that $d_a$ commutes with $\mathcal L$, i.e.
\[
d_a \mathcal{L} \eta =\mathcal{L} d_a \eta,\quad \eta \in \Gamma ( M,
\mathcal{E} ),
\]
and that 
\[
\| d_a \eta \|^2 \le C \sum_{i=1}^n \| \nabla_{\X_i} \eta \|^2, \quad \eta \in \Gamma ( M,
\mathcal{E} ),
\]
for some constant $C\ge 0$. 

The following theorem can then be proved by combining the techniques of this paper with the analysis performed in Section 3.1 of \cite{BBC}.

\begin{theorem} \label{JKNM}
Let $\Phi: (0,+\infty) \to \mathbb{C}$ be a complex Borel function. If there exists a finite complex Borel measure $\alpha$ on $\mathbb{R}_{\ge 0}$ such that for every $x \in (0,+\infty)$,
\[
\Phi(x)= \int_{0} ^{+\infty} \frac{d \alpha (m)} {\sqrt{ x +m}},
\]
then, for every  $p>1$ and   $\eta \in \Gamma_0 ( M,\mathcal{E}  )$
\[
\|  \Phi(-\mathcal L ) \, d_a \eta \|_p \le 6 C (p^*-1) |\alpha |(\mathbb{R}_{\ge 0})  \| \eta \|_p.
\]
\end{theorem}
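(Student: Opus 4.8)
We sketch a plan of proof, which reproduces in the vector bundle framework of Section~3.1 of \cite{BBC} the strategy used above for the scalar operators $T_i$, $S_{ij}$: for each fixed $m\ge 0$ one produces a Gundy--Varopoulos representation of $(-\mathcal L+m)^{-1/2}d_a$ as a conditional expectation of a martingale transform built on the product process $(X_t,\eta_t)$, bounds it with the inequalities of Section~\ref{sec2}, and then integrates against $d\alpha(m)$. Since $\Phi(x)=\int_0^{+\infty}(x+m)^{-1/2}\,d\alpha(m)$ and $d_a$ commutes with $\mathcal L$, hence with every bounded Borel function of $-\mathcal L$ and in particular with $(-\mathcal L+m)^{-1/2}$, the spectral theorem gives
\[
\Phi(-\mathcal L)\,d_a\eta=\int_0^{+\infty}(-\mathcal L+m)^{-1/2}\,d_a\eta\,d\alpha(m),\qquad \eta\in\Gamma_0(M,\mathcal E),
\]
so that by Minkowski's integral inequality it suffices to prove $\big\|(-\mathcal L+m)^{-1/2}d_a\eta\big\|_p\le 6\,C\,(p^*-1)\,\|\eta\|_p$ with a constant uniform in $m\ge 0$.

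Fix $m\ge 0$ and take as vertical diffusion the Brownian motion with negative drift, $\mathcal B=\partial_y^2-2\sqrt m\,\partial_y$ (the case $a\equiv 1$), for which $\mathcal K(y,\lambda)=e^{-y(\sqrt{\lambda+m}-\sqrt m)}$ and $G(+\infty,y)=\tfrac1{2\sqrt m}(1-e^{-2\sqrt m\,y})$, with the usual convention at $m=0$. Set $P_t=e^{t\mathcal L}$ and $U_\eta(x,y)=\int_0^{+\infty}P_t\eta(x)q_t(y)\,dt=\mathcal K(y,-\mathcal L)\eta(x)$. The first step is the bundle version of Theorem~\ref{stinga-torrea}, namely $(\mathcal L+\mathcal B)U_\eta=0$ on $M\times(0,+\infty)$ and $U_\eta(\cdot,0)=\eta$, equivalently $(\nabla_0+\sum_i\nabla_i^2+\mathcal B)U_\eta=-\mathcal F U_\eta$. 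Its proof follows Lemma~\ref{martin}: because $\mathcal F$ is endomorphism-valued the Feynman--Kac weight is not a scalar exponential but the multiplicative functional $(J_t)$ obtained by solving a linear ODE driven by $\mathcal F(X_t)$ along the parallel transport of $X$ (the construction of Section~3.1 of \cite{BBC}); one has $|J_t|\le 1$ since $\mathcal F\le 0$, the processes $X$ and $\eta$ are independent, and $M_t^\eta:=J_{t\wedge\tau}U_\eta(X_{t\wedge\tau},\eta_{t\wedge\tau})$ is a martingale, whose vanishing bounded-variation part yields the equation.

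Next one introduces, in analogy with \eqref{eq:Ti2},
\[
T_a\eta=\int_0^{+\infty}G(+\infty,y)\,\partial_y\mathcal K(y,-\mathcal L)\,d_a\mathcal K(y,-\mathcal L)\eta\,dy=\int_0^{+\infty}G(+\infty,y)\,\partial_y\mathcal K(y,-\mathcal L)\,\mathcal K(y,-\mathcal L)\,d_a\eta\,dy,
\]
the second equality by $d_a\mathcal L=\mathcal L d_a$; the scalar identity behind Lemma~\ref{lem:TiSijRd} (with $\sigma=1$ and $m$ there replaced by $\sqrt m$) gives $\int_0^{+\infty}G(+\infty,y)\partial_y\mathcal K(y,\lambda)\mathcal K(y,\lambda)\,dy=-\tfrac14(\lambda+m)^{-1/2}$, whence $T_a\eta=-\tfrac14(-\mathcal L+m)^{-1/2}d_a\eta$. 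Copying the proof of Theorem~\ref{thm:TiSijW-rep 2} --- It\^o's formula for $M_t^\eta$, the It\^o isometry, and the occupation-time identity \eqref{eq:killedP}, which holds here since $X$ and $\eta$ are independent --- yields
\[
T_a\eta(x)=\tfrac12\lim_{y_0\to+\infty}\mathbb E^{y_0}\Big(J_\tau\int_0^\tau J_s^{-1}\,d_aU_\eta(X_s,\eta_s)\,d\beta_s\ \Big|\ X_\tau=x\Big).
\]
The $L^p$ bound is then obtained exactly as in the proof of Theorem~\ref{thm:W} for a nonzero potential, with $\mathcal F$ in place of $V$: with $Y_t=\int_0^t d_aU_\eta(X_s,\eta_s)\,d\beta_s$ and $N_t=\sum_i\int_0^{t\wedge\tau}\nabla_{\X_i}U_\eta(X_s,\eta_s)\,dB_s^i+\int_0^{t\wedge\tau}\partial_yU_\eta(X_s,\eta_s)\,d\beta_s$, the hypothesis $\|d_a\eta\|^2\le C\sum_i\|\nabla_{\X_i}\eta\|^2$ forces $\langle Y\rangle_t\le C\langle N\rangle_t$, so that the martingale inequalities apply with the factor $C$ entering the final constant; meanwhile It\^o's formula and $(\mathcal L+\mathcal B)U_\eta=0$ give $N_t=U_\eta(X_{t\wedge\tau},\eta_{t\wedge\tau})-U_\eta(X_0,\eta_0)+2\int_0^{t\wedge\tau}\mathcal F(X_s)U_\eta(X_s,\eta_s)\,ds$, and since $\mathcal F\le 0$ the process $\|U_\eta(X_{t\wedge\tau},\eta_{t\wedge\tau})\|$ is a nonnegative submartingale, so the Lenglart--L\'epingle--Pratelli inequality \cite{MR580107} gives $\|N_\tau\|_p\le(p+1)\|\eta\|_p$ as in the scalar case; Theorem~\ref{thm:BO15} in its matrix-valued form then yields $\|Y_\tau\|_p\le C(p^*-1)(p+1)\|\eta\|_p$, hence $\|T_a\eta\|_p\le\tfrac12 C(p^*-1)(p+1)\|\eta\|_p\le\tfrac32 C(p^*-1)\|\eta\|_p$ for $1<p\le 2$, the range $p>2$ following by duality; recalling $(-\mathcal L+m)^{-1/2}d_a\eta=-4T_a\eta$ and integrating against $d\alpha(m)$ finishes the argument.

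The main obstacle is the construction and analysis of the martingale $M_t^\eta$ and of the differential subordination in the presence of the endomorphism-valued potential $\mathcal F$: the Feynman--Kac weight must be replaced by the non-commutative multiplicative functional $J_t$, and one must verify that the martingale property, the computation of $\langle M^\eta\rangle$, the submartingale property of $\|U_\eta(X,\eta)\|$, and the bracket comparison $\langle Y\rangle\le C\langle N\rangle$ all persist. All of this is carried out in Section~3.1 of \cite{BBC}; the one genuinely new point is that the vertical diffusion $\eta$ may now be taken to be the general Brownian motion with negative drift --- which is precisely what makes the resolvent power $(-\mathcal L+m)^{-1/2}$, and by integration an arbitrary Stieltjes-type multiplier $\Phi$, accessible --- and this change does not disturb any of those arguments.
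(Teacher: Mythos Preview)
Your proposal is correct and follows precisely the route the paper indicates: the paper's own ``proof'' is a single sentence directing the reader to combine the martingale-transform techniques developed here (the Gundy--Varopoulos representation with the drifted Brownian motion as vertical diffusion, the Lenglart--L\'epingle--Pratelli estimate for the submartingale $\|U_\eta(X,\eta)\|$, and Theorem~\ref{thm:BO15}) with the non-commutative Feynman--Kac machinery of Section~3.1 of \cite{BBC}, and you have carried out exactly that combination in detail. The only point worth flagging is the bookkeeping of the constant $C$: from $\|d_a\eta\|^2\le C\sum_i\|\nabla_{\X_i}\eta\|^2$ one gets $\langle Y\rangle_t\le C\langle N\rangle_t$, so differential subordination yields a factor $\sqrt C$ rather than $C$; the paper's stated bound $6C(p^*-1)$ is thus slightly generous (or reflects a different normalization of $C$ in \cite{BBC}), but this does not affect the argument.
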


Theorem \ref{Riesz Hodge} follows then from the previous theorem as in Section 3.2 of \cite{BBC}.

\subsection*{Acknowledgements}  We are grateful to the anonymous referee for the very careful reading and the many comments, including corrections which greatly improved the paper.  

\bibliographystyle{amsplain}
\bibliography{multiplier-1.bib}

\end{document}